\numberwithin{equation}{section}
\newtheorem{theorem}{Theorem}[section]
\newtheorem{corollary}[theorem]{Corollary}
\newtheorem{proposition}[theorem]{Proposition}
\newtheorem{example}[theorem]{Example}
\newtheorem{remark}[theorem]{Remark}
\newenvironment{proof}{{\it Proof :~}}{\hfill$\diamondsuit$\\}
\newcommand{\mendth}{\hfill \ensuremath{\vartriangle}}
\providecommand{\blue}[1]{\textcolor{black}{#1}}
\providecommand{\bluee}[1]{\textcolor{black}{#1}}
\DeclareMathOperator*{\col}{col}
\DeclareMathOperator*{\diag}{diag}
\DeclareMathOperator{\eps}{\varepsilon}
\def\cite{\citep}
\def\T{\top}
\begin{document}

\begin{frontmatter}

\title{Dwell-time stability and stabilization conditions for linear positive impulsive and switched systems}

\author{Corentin Briat}\ead{corentin.briat@bsse.ethz.ch,corentin@briat.info}\ead[url]{http://www.briat.info}
\address{Department of Biosystems Science and Engineering, ETH Z\"{u}rich, Switzerland.}

\begin{keyword}
Positive systems; impulsive systems; switched systems; clock-dependent conditions
\end{keyword}

\begin{abstract}
 Several results regarding the stability and the stabilization of linear impulsive positive systems under arbitrary, constant, minimum, maximum and range dwell-time are obtained. The proposed stability conditions characterize the pointwise decrease of a linear copositive Lyapunov function and are formulated in terms of finite-dimensional or semi-infinite linear programs. To be applicable to uncertain systems and to control design, a lifting approach introducing a clock-variable is then considered in order to make the conditions affine in the matrices of the system. The resulting stability and stabilization conditions are stated as infinite-dimensional linear programs for which three asymptotically exact computational methods are proposed and compared with each other on numerical examples. Similar results are then obtained for linear positive switched systems by exploiting the possibility of reformulating a switched system as an impulsive system. Some existing stability conditions are retrieved and extended to stabilization using the proposed lifting approach. Several examples are finally given for illustration.
\end{abstract}

\end{frontmatter}

%\tableofcontents

\section{Introduction}

Linear positive systems \cite{Farina:00} have been recently the subject of an increasing attention because of their natural ability to represent many real-world processes such as, among others, communication networks \cite{Shorten:06,Briat:13f}, biological networks \cite{Murray:02,Briat:12c,Briat:13h,Briat:15e}, epidemiological networks \cite{Murray:02,Briat:09h}, disease dynamics \cite{Jonsson:14}, etc. Besides their applicative potential, linear positive systems have been shown to exhibit a number of interesting theoretical properties of independent interest. For instance, it is now well-known that linear copositive Lyapunov functions can be used in order to formulate exact stability conditions taking the form of linear programs \cite{Haddad:05}. The design of structured and bounded state-feedback controllers \cite{Aitrami:07,Briat:11h} and certain classes of static output feedback controllers \cite{AitRami:11} are known to be convex and hence easily tractable. The $L_p$-gains for $p=1,2,\infty$ can be exactly computed using convex programming and these gains are identical to the $p$-norm of the static matrix-gain of the system \cite{Tanaka:13b,Briat:11g}. The famous Kalman-Yakubovich-Popov Lemma has been shown to admit a linear formulation in this setting \cite{Rantzer:15}. Robust analysis results also nicely extends and simplifies in this context, and often becomes necessary and sufficient criteria for stability \cite{Colombino:15,Colombino:15b,Briat:15cdc,Briat:11g,Briat:11h}. Their generalization to delay-systems with discrete-delays also led to the surprise that the system is stable if and only if the system with zero delay is stable \cite{Haddad:04,AitRami:09,Briat:11h,Shen:15,Briat:16b}. Extensions to deterministically \cite{Fornasini:10,Zappavigna:10a,Zappavigna:10b,Blanchini:15} or stochastically \cite{Bolzern:14,Bolzern:15} switched systems have also been considered. \blue{Positive systems have also been recently used as (conservative) comparison systems for establishing the stability of various classes of systems such as systems with delays \cite{Agarwal:12,Mazenc:16,Ngoc:16,Ngoc:16b}. Finally, the design of interval observers heavily relies on the use of positive systems theory \cite{Gouze:00,Mazenc:11,Briat:15g}. It was notably shown in \cite{Briat:15g} that the observer-gain that minimizes the $L_\infty$-gain of map between the disturbance input and the observation error is independent of the output matrices of the error system.}

We consider here the case of linear positive impulsive systems, a class of systems that seems to have been quite overlooked until now as  only very few results can be found; see e.g. \cite{Dvirnyi:04,Wang:14,Zhang:14b}. Such systems can be used to represent certain classes biochemical, population or epidemiological models having deterministic jumps in their dynamics. They can also be used to represent processes that can be represented as linear positive switched systems; see e.g. \cite{Blanchini:15} for some examples including epidemiology \cite{Perelson:99,Hernandez:13,AitRami:14}, traffic congestion models \cite{Blanchini:12b}, etc. Impulsive systems are also known to be able to exactly represent sampled-data systems as emphasized in \cite{Khargonekar:91,Sivashankar:94,Naghshtabrizi:08,Briat:13d}. \bluee{Such systems are also interesting from a theoretical standpoint as they can be useful for the analysis and design of interval observers for linear impulsive systems (and hence sampled-data and switched systems) or for analyzing the stability of nonlinear impulsive, switched and sampled-data systems; see e.g. \cite{Dvirnyi:04}.}

The goal of this paper is hence the derivation of novel stability and stabilization conditions for linear positive impulsive systems using the concepts of arbitrary, constant, minimum, maximum and range dwell-times. The concept of \emph{minimum dwell-time} has been introduced by Morse in \cite{Morse:96} in order to formulate stability conditions for general (i.e. not necessarily positive) switched systems. The concept of \emph{average dwell-time} has been proposed in \cite{Hespanha:99} in order to obtained less conservative conditions that by using minimum dwell-time conditions. Since then, a large body of the literature has been focusing on these concepts as a way to efficiently  characterize the stability of switched systems or, more generally, the stability of hybrid systems; see e.g. \cite{Goebel:09}. The notion of minimum dwell-time has been revisited in \cite{Geromel:06b} where novel sufficient LMI conditions derived from mode-dependent quadratic Lyapunov functions were proposed. Based on a theoretical result proved in \cite{Wirth:05}, these conditions were later extended and made necessary and sufficient in \cite{Chesi:12} through the consideration of mode-dependent homogeneous Lyapunov functions. Analogous results using polyhedral Lyapunov functions have been also obtained in \cite{Blanchini:10}. Unfortunately, these conditions were inapplicable to uncertain systems and to control design because of their complex nonlinear dependency in the matrices of the system. This problem motivated the introduction of the so-called \emph{looped-functionals}, a particular class of indefinite (i.e. not necessarily positive definite) functionals having the advantage of reformulating the complex conditions of \cite{Geromel:06b} into conditions being linear in the matrices of the system; see e.g. \cite{Seuret:12,Briat:12h,Briat:13b}, thereby extending the scope of the conditions to uncertain and nonlinear systems. Yet, these conditions were difficult to apply in the context of control design because of the presence of multiple products between decision matrices and the matrices of the system; see e.g. \cite{Briat:12h,Briat:13b,Briat:15c,Briat:15f}.  \emph{Clock-dependent conditions} have been shown to provide an essential framework for solving this latter problem as they produce stability conditions that are linear/convex in the matrices of the system and can be used for design purposes. Their computational complexity has also been shown to be much lower than that of looped-functionals \cite{Briat:15f}. Since then, clock-dependent conditions have been used for the analysis and control of switched, impulsive, sampled-data and LPV systems; see e.g. \cite{Allerhand:11,Allerhand:13,Allerhand:13b,Briat:13d,Briat:14a,Shaked:14,Briat:14f,Briat:15d,Briat:15i,Xiang:16}. Such results have also been applied to more practical problems such as fault tolerant control \cite{Allerhand:15,Briat:TAC16} or estimation \cite{Allerhand:13b,Nguyen:15}.

The first part of the paper is similar to the ones in \cite{Briat:11l,Briat:13d} where stability conditions are formulated in terms of the decrease of a Lyapunov function of a given type. Unlike in the previous references where quadratic Lyapunov functions are involved, we exploit here the positivity of the system and consider a linear copositive Lyapunov function \cite{Haddad:05}. The resulting conditions are stated in terms of finite-dimensional linear programs or  semi-infinite dimensional linear programs, which are then relaxed into clock-dependent conditions using a lifting approach similar to that of \cite{Briat:13d,Briat:14f,Briat:15d,Briat:15f}. Since linear copositive Lyapunov functions are used here, the clock-dependent conditions consist of  infinite-dimensional linear programs. This has to be contrasted with the fact that when quadratic Lyapunov functions are used, clock-dependent conditions take the form of infinite-dimensional semidefinite programs, which may be harder to solve that their linear counterpart. Three possible ways for efficiently checking these conditions are then proposed. The first one relies on a discretization approach which is largely inspired from \cite{Allerhand:11} and where the infinite-dimensional decision variable is assumed to be continuous and piecewise linear. This method has also been considered, in turn, in \cite{Allerhand:13,Allerhand:13b,Shaked:14,Xiang:15a,Briat:15f,Briat:15i}. By doing so, the infinite-dimensional program becomes finite-dimensional and can be solved using conventional algorithms such as interior point methods; see e.g. \cite{Boyd:04}. The second method is based on Handelman's theorem \cite{Handelman:88} which characterizes the positivity of a given polynomial on a compact polytope by formulating it as a nonnegative linear combination of products of the (affine) basis functions that describe the polytope. This result has been applied in various contexts \cite{Scherer:06,Briat:11h,Briat:11g,Kamyar:15} and, notably, for characterizing the robust stability of uncertain linear positive systems in \cite{Briat:11g,Briat:11h}. An important property of this approach is that the obtained characterization for the positivity of the polynomial can be exactly formulated as a finite-dimensional linear program, which can again be solved using well-known approaches. Finally, the last one is based on Putinar's Positivstellensatz \cite{Putinar:93} which characterizes the positivity of a given polynomial on a compact semialgebraic set by formulating it as a weighted linear combination of the basis functions that describe the set and where the weights are sum of squares polynomials. The resulting problem takes, in this case, the form of a finite-dimensional semidefinite program \cite{Parrilo:00} that can be solved using standard semidefinite programming solvers such as SeduMi \cite{Sturm:01a} or SDPT3 \cite{Tutuncu:03} used in conjunction with the package SOSTOOLS \cite{sostools3}. It is notably emphasized that these relaxations are asymptotically exact meaning that when the discretization order, the number of products of basis functions or the degree of the sum of squares weights are sufficiently large, then the relaxed problem is feasible if the original one is. Several examples are considered in order to demonstrate the practicality of the relaxed conditions and to compare them in terms of number of variables and solving time. The results are then extended to control design by considering the clock-dependent conditions and the dual impulsive system \cite{Lawrence:10,Blanchini:15}. %Several illustrative examples are given as an illustration.
By finally exploiting the possibility of formulating a switched system as an impulsive system, we derive a number of stability conditions for linear positive switched systems. Notably, we recover stability conditions similar to those in \cite{Zappavigna:10b,Blanchini:15} which are the positive counterpart of those obtained in \cite{Geromel:06b} in the context of general linear switched systems while some other ones seem to be new. Novel stabilization conditions  under arbitrary, minimum and mode-dependent range dwell-time conditions are then obtained and illustrated through few examples.\\

\noindent\textbf{Outline.} The structure of the paper is as follows: in Section \ref{sec:preliminary} preliminary definitions and results are given. Section \ref{sec:stab} is devoted to the dwell-time stability analysis of linear positive impulsive systems while Section \ref{sec:stabz} addresses their stabilization. These results are then applied to switched systems in Section \ref{sec:switched}.\\

\noindent\textbf{Notations.} The cone of positive (nonnegative) vectors of dimension $n$ are denoted by $\mathbb{R}_{>0}^n$ ($\mathbb{R}_{\ge0}^n$). The set of diagonal matrices of dimension $n$ is denoted by $\mathbb{D}^n$ and that of diagonal matrices with positive diagonal elements by $\mathbb{D}^n_{\succ0}$. For a set of matrices $\{A_1,\ldots,A_N\}$ of compatible dimensions, we define $\textstyle\diag_i(A_i)$ to be the block diagonal matrix with the $A_i$'s as diagonal blocks and by $\textstyle\col_i(A_i)$ the matrix consisting of the $A_i$'s stacked in column. The $n$-dimensional vector of ones is denoted by $\mathds{1}_n$.

\section{Preliminaries}\label{sec:preliminary}

Let us consider here the following class of linear impulsive system:
\begin{equation}\label{eq:mainsyst}
\begin{array}{rcl}
  \dot{x}(t)&=&Ax(t),\ t\ne t_k\\
  x(t^+)&=&Jx(t),\ t=t_k\\
  x(t_0)&=&x_0
\end{array}
\end{equation}
where $x,x_0\in\mathbb{R}_{\ge0}^n$ are the state of the system and its initial condition, respectively. The notation $x(t^+)$ is a shorthand for $\textstyle\lim_{s\downarrow t}x(s)$, i.e. the trajectories are assumed to be left-continuous. The sequence of impulse instants $\{t_k\}_{k\in\mathbb{N}}$ is assumed to be strictly increasing and to grow unboundedly. As a consequence, it does not admit any accumulation point and may not lead to any Zeno behavior for dynamics of the system. Note that this assumption is not restrictive for the current paper as we only focus here on dwell-time results. We define the dwell-times as the values $T_k:=t_{k+1}-t_k$, $k\in\mathbb{N}$.

The following result states necessary and sufficient conditions for the impulsive system \eqref{eq:mainsyst} to be positive:
\begin{proposition}
The following statements are equivalent:
\begin{enumerate}[(a)]
  \item The system \eqref{eq:mainsyst} is positive; i.e. for any $x_0\ge0$, we have that $x(t)\ge0$ for all $t\ge0$.
  \item The matrix $A$ is Metzler (all its off-diagonal entries are nonnegative) and $J$ is nonnegative.
\end{enumerate}
\end{proposition}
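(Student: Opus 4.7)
The plan is to establish the two implications separately, leveraging the standard characterization of positivity for purely continuous linear systems together with a continuity/limiting argument for the jump map.

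For the direction (b) $\Rightarrow$ (a), I would argue inductively over the sequence of impulse instants. Between two consecutive impulses $t_k$ and $t_{k+1}$ the solution is $x(t) = e^{A(t-t_k)} x(t_k^+)$. Since $A$ is Metzler, the classical fact that $e^{A\tau}$ is (entrywise) nonnegative for all $\tau \ge 0$ gives $x(t) \ge 0$ on $(t_k,t_{k+1}]$ whenever $x(t_k^+) \ge 0$. At the next impulse, $x(t_{k+1}^+) = J x(t_{k+1}) \ge 0$ because $J$ is nonnegative and nonnegative matrices preserve the nonnegative orthant. Starting from $x(t_0) = x_0 \ge 0$, induction on $k$ closes the argument. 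I will also recall briefly why $A$ Metzler implies $e^{A\tau} \ge 0$: write $A = B - \alpha I$ with $\alpha$ large enough that $B \ge 0$, so that $e^{A\tau} = e^{-\alpha\tau}\sum_{j\ge 0}(B\tau)^j/j! \ge 0$.

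For the direction (a) $\Rightarrow$ (b), I would split into two separate claims. First, applying the positivity property on the interval $[t_0,t_1]$ (on which the system is the plain ODE $\dot x = Ax$) forces $e^{A\tau} e_i \ge 0$ for every standard basis vector $e_i$ and every $\tau\in[0, t_1-t_0]$; since the impulse sequence is arbitrary we may take $t_1$ as large as we like, so $e^{A\tau} \ge 0$ for all $\tau \ge 0$. Differentiating at $\tau = 0$ then shows that every off-diagonal entry of $A$ is nonnegative, i.e.\ $A$ is Metzler. Second, to show $J \ge 0$, fix an index $i$ and pick an impulse sequence whose first impulse time $t_1$ is close to $t_0$. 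Set $x_0 = e_i$; then $x(t_1) = e^{A(t_1-t_0)} e_i$, which is nonnegative and tends to $e_i$ as $t_1 \downarrow t_0$. By positivity, $x(t_1^+) = J x(t_1) \ge 0$ for each such sequence; letting $t_1 \downarrow t_0$ and invoking continuity of matrix multiplication yields $J e_i \ge 0$. Since $i$ was arbitrary, every column of $J$ is nonnegative.

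The only subtle point is the second half of the converse: we need a way to probe $J$ at each basis vector $e_i$, and it is not immediate that $e_i$ itself is reachable by the flow. The trick is to exploit the freedom to choose the impulse sequence and rely on continuity in $t_1$, rather than trying to invert $e^{A(t_1-t_0)}$ (which would generally take us outside the nonnegative cone). With this in hand the rest of the proof is routine, and the statement falls out cleanly.
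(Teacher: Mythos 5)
Your proof is correct, and it is the standard argument: the paper states this proposition without any proof (it is offered as a known fact), so there is nothing to compare against on the paper's side. Both directions are handled properly — the splitting $A=B-\alpha I$ for the nonnegativity of $e^{A\tau}$, the induction over impulse intervals, and the differentiation of $e^{A\tau}$ at $\tau=0$ to recover the Metzler property. Your remark about the converse for $J$ is the one genuinely delicate point and you resolve it the right way: for a \emph{fixed} impulse sequence one only learns $Je^{AT_0}\ge0$, which does not force $J\ge0$ (the flow can map the orthant into a strictly interior subcone on which a sign-indefinite $J$ acts nonnegatively), so one must either let $t_1\downarrow t_0$ over a family of admissible sequences or, equivalently, use $e^{A\tau}\to I$ together with $Je^{A\tau}\ge0$; this implicitly requires reading statement (a) as positivity for all admissible impulse sequences, which is the intended interpretation in the paper.
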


The following result establishes conditions for which a Metzler matrix is Hurwitz stable:
\begin{proposition}[\cite{Berman:94,Farina:00}]\label{prop:Hurwitz}
Let $A\in\mathbb{R}^{n\times n}$ be a Metzler matrix. Then, the following statements are equivalent:
\begin{enumerate}[(a)]
  \item $A$ is Hurwitz stable;
  \item $A$ is nonsingular and $A^{-1}\le0$;
  \item There exists a $\lambda\in\mathbb{R}^n_{>0}$ such that $\lambda^\T A<0$;
  \item There exists a $\mu\in\mathbb{R}^n_{>0}$ such that $A\mu<0$;
\end{enumerate}
\end{proposition}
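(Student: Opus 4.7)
The plan is to prove the cycle (a) $\Rightarrow$ (b) $\Rightarrow$ (d) $\Rightarrow$ (a), and then obtain the equivalence (a) $\Leftrightarrow$ (c) by applying the result to the transpose $A^\T$, which is also Metzler and has the same spectrum as $A$. For (a) $\Rightarrow$ (b), Hurwitz stability excludes $0$ from the spectrum, so $A$ is invertible. Since $A$ is Metzler, $e^{At}$ is entrywise nonnegative for all $t \ge 0$, and the exponential decay ensured by Hurwitz stability makes the improper integral converge, giving
\begin{equation*}
-A^{-1} = \int_0^\infty e^{At}\,dt \ge 0,
\end{equation*}
hence $A^{-1} \le 0$.

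For (b) $\Rightarrow$ (d), a natural candidate is $\mu := -A^{-1}\mathds{1}_n$, which immediately satisfies $\mu \ge 0$ and $A\mu = -\mathds{1}_n < 0$. To promote $\mu \ge 0$ to the strict inequality $\mu > 0$ required by (d), suppose that $\mu_i = 0$ for some index $i$; then the Metzler property yields $(A\mu)_i = \sum_{j\ne i} A_{ij}\mu_j \ge 0$, contradicting $(A\mu)_i = -1$. For the converse (d) $\Rightarrow$ (a), pick $\alpha > 0$ large enough that $B := A + \alpha I \ge 0$; the hypothesis then reads $B\mu < \alpha \mu$ componentwise with $\mu > 0$, so a Collatz--Wielandt / Perron--Frobenius argument for nonnegative matrices yields $\rho(B) < \alpha$. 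Since $\sigma(A) = \sigma(B) - \alpha$, every eigenvalue of $A$ has real part at most $\rho(B) - \alpha < 0$, so $A$ is Hurwitz.

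The equivalence with (c) follows at once by applying the already-established (a) $\Leftrightarrow$ (d) to $A^\T$: this produces a vector $\lambda \in \mathbb{R}^n_{>0}$ with $A^\T \lambda < 0$, i.e. $\lambda^\T A < 0$. The main obstacle in the whole argument is the step (d) $\Rightarrow$ (a), which is the only one that genuinely relies on the spectral theory of nonnegative matrices; the remaining implications are essentially bookkeeping once that tool is in hand.
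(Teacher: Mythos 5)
Your proof is correct. Note that the paper itself offers no proof of this proposition: it is stated as a classical characterization of Hurwitz Metzler matrices (equivalently, of nonsingular M-matrices, up to a sign) and simply cited to Berman--Plemmons and Farina--Rinaldi, so there is no in-paper argument to compare against. Your cycle (a) $\Rightarrow$ (b) $\Rightarrow$ (d) $\Rightarrow$ (a) together with the transpose trick for (c) is a standard and complete route, and you handle the two points where such proofs usually go wrong: first, the promotion of $\mu=-A^{-1}\mathds{1}_n\ge0$ to $\mu>0$, which your componentwise argument $(A\mu)_i=\sum_{j\ne i}A_{ij}\mu_j\ge0$ versus $(A\mu)_i=-1$ settles correctly using only the Metzler sign pattern; and second, the implication (d) $\Rightarrow$ (a), where the Collatz--Wielandt bound $\rho(B)\le\max_i (B\mu)_i/\mu_i<\alpha$ for the nonnegative shift $B=A+\alpha I$ is valid for arbitrary (not necessarily irreducible) nonnegative matrices, and the spectral shift $\sigma(A)=\sigma(B)-\alpha$ combined with $\operatorname{Re}\nu+\alpha\le|\nu+\alpha|\le\rho(B)$ gives Hurwitz stability. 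The only cosmetic remark is that the integral identity $-A^{-1}=\int_0^\infty e^{At}\,dt$ presupposes $e^{At}\ge0$ for Metzler $A$, which you invoke without proof; it follows from $e^{At}=e^{-\alpha t}e^{(A+\alpha I)t}$ with $A+\alpha I\ge0$, and is in any case the same elementary fact underlying the paper's positivity characterization of the flow in Proposition 2.1.
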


Similarly, the next  result establishes conditions for which a nonnegative matrix is Schur stable:
\begin{proposition}\label{prop:Schur}
Let $B\in\mathbb{R}^{n\times n}$ be a nonnegative matrix. Then, the following statements are equivalent:
\begin{enumerate}[(a)]
  \item $B$ is Schur stable;
  \item $B-I_n$ is Hurwiz stable;
  \item There exists a $\lambda\in\mathbb{R}^n_{>0}$ such that $\lambda^\T(B-I_n)<0$;
  \item There exists a $\mu\in\mathbb{R}^n_{>0}$ such that $(B-I_n)\mu<0$;
\end{enumerate}
\end{proposition}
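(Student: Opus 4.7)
The plan is to reduce all four equivalences to Proposition \ref{prop:Hurwitz} applied to the shifted matrix $B-I_n$. First I would observe that since $B$ is nonnegative, the off-diagonal entries of $B-I_n$ coincide with those of $B$ and hence are nonnegative, so $B-I_n$ is Metzler. Proposition \ref{prop:Hurwitz} applied to $B-I_n$ then immediately yields the equivalences (b) $\Leftrightarrow$ (c) $\Leftrightarrow$ (d), since the inequalities appearing in (c) and (d) here are exactly those in (c) and (d) of Proposition \ref{prop:Hurwitz} with $A$ replaced by $B-I_n$.

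It therefore remains to establish (a) $\Leftrightarrow$ (b). For this I would invoke the Perron--Frobenius theorem: for a nonnegative matrix $B$ the spectral radius $\rho(B)$ is itself an eigenvalue of $B$ and is, in particular, real and nonnegative. Combined with the trivial bound $\operatorname{Re}(\lambda) \le |\lambda| \le \rho(B)$ valid for every eigenvalue $\lambda$ of $B$, this yields
\[
\max_i \operatorname{Re}(\lambda_i(B)) = \rho(B).
\]
Since the eigenvalues of $B-I_n$ are precisely $\{\lambda_i(B)-1\}_i$, the spectral abscissa of $B-I_n$ equals $\rho(B)-1$. Hence $B-I_n$ is Hurwitz stable if and only if $\rho(B) < 1$, which is in turn equivalent to $B$ being Schur stable.

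The only subtle point in the argument is the identity $\max_i \operatorname{Re}(\lambda_i(B)) = \rho(B)$: this is the exact place where the nonnegativity of $B$ is indispensable, since for a general matrix the Schur stability of $B$ and the Hurwitz stability of $B-I_n$ are unrelated. Once this identity is secured via Perron--Frobenius, the rest of the statement is a direct translation of Proposition \ref{prop:Hurwitz}.
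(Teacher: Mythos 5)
Your proposal is correct: reducing (b)$\Leftrightarrow$(c)$\Leftrightarrow$(d) to Proposition~\ref{prop:Hurwitz} via the observation that $B-I_n$ is Metzler, and settling (a)$\Leftrightarrow$(b) through the Perron--Frobenius identity $\max_i\operatorname{Re}(\lambda_i(B))=\rho(B)$, is exactly the standard argument the paper implicitly relies on (it states this proposition without proof, as a known companion to Proposition~\ref{prop:Hurwitz}). No gaps.
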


The following generic stability result will be important in the sequel:
\begin{proposition}\label{th:equivalence}
    Let the sequence $\{t_k\}_{k\in\mathbb{N}}$ for the system \eqref{eq:mainsyst} be given and assume that it satisfies the strict increase and unboundedness conditions. Then, the following statements are equivalent:
    \begin{enumerate}[(a)]
      \item\label{item:equi:a} The impulsive system \eqref{eq:mainsyst} is asymptotically stable.
      \item\label{item:equi:a2} The dual impulsive system
      \begin{equation}\label{eq:dual}
        \begin{array}{rcl}
          \dot{\tilde{y}}(t)&=&A^\T \tilde{y}(t),\ t\ne t_k\\
          \tilde{y}(t^+)&=&J^\T\tilde{y}(t),\ t=t_k
        \end{array}
    \end{equation}
      is asymptotically stable.
      \item\label{item:equi:b} The discrete-time system
      \begin{equation}\label{eq:embedded}
        x_{k+1}=Je^{AT_k}x_k
     \end{equation}
     is asymptotically stable.
     \item\label{item:equi:c} The discrete-time system
       \begin{equation}\label{eq:embedded_swapped}
       y_{k+1}=e^{AT_k}Jy_k
        \end{equation}
        is asymptotically stable.
           \item\label{item:equi:d} The discrete-time system
\begin{equation}\label{eq:embedded_dual}
 \tilde x_{k+1}=J^\T e^{A^\T T_k}\tilde x_k
\end{equation}
    is asymptotically stable.
    \item\label{item:equi:e} The discrete-time system
\begin{equation}\label{eq:embedded_swapped_dual}
 \tilde y_{k+1}=e^{A^\T T_k}J^\T\tilde y_k
\end{equation}
    is asymptotically stable.
    \end{enumerate}
\end{proposition}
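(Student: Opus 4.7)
The plan is to split the six-way equivalence into two three-way equivalences — one for the primal $\{(a),(c),(d)\}$ and one for the dual $\{(b),(e),(f)\}$, each obtained by sampling the continuous flow at the impulse instants — and then to close the loop by a positive-system primal-dual argument.

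First I would sample the continuous dynamics at the impulse instants. On $(t_k,t_{k+1}]$ the solution of \eqref{eq:mainsyst} reads $x(t)=e^{A(t-t_k)}x(t_k^+)$ and the jump rule gives $x(t_{k+1}^+)=Je^{AT_k}x(t_k^+)$. Hence $x_k:=x(t_k^+)$ satisfies \eqref{eq:embedded}, while $y_k:=x(t_k)$ satisfies $y_{k+1}=e^{AT_k}Jy_k$, which is \eqref{eq:embedded_swapped}. Since on each inter-impulse interval the continuous trajectory is a bounded linear transformation of its endpoint values, asymptotic convergence of $x(t)$ is equivalent to convergence to zero of either sampled sequence, giving $(a)\Leftrightarrow(c)\Leftrightarrow(d)$. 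Applying the same argument verbatim to the dual impulsive system \eqref{eq:dual} produces $(b)\Leftrightarrow(e)\Leftrightarrow(f)$.

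Second I would close the equivalence by establishing the primal-dual link $(c)\Leftrightarrow(e)$. Writing the $n$-step transition matrices $\Phi_n=Je^{AT_{n-1}}\cdots Je^{AT_0}$ and $\Psi_n=e^{AT_{n-1}}J\cdots e^{AT_0}J$, the identity $J\Psi_n=\Phi_n J$ — obtained by a direct regrouping of factors — shows that $\Phi_n$ and $\Psi_n$ are cyclic permutations of the same product of $J$- and $e^{AT_k}$-blocks and therefore share the same nonzero spectrum. The analogous intertwining holds between the transition matrices of $(e)$ and $(f)$, and a transposition identifies the transition matrix of $(f)$ with the transpose of the product of the primal factors taken in reversed time order. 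Combining these observations with the invariance of entrywise convergence to zero under transposition of nonnegative matrices yields $(c)\Leftrightarrow(e)$, closing the six-way equivalence.

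The hard part will be precisely this primal-dual link: for an arbitrary, non-periodic dwell-time sequence $\{T_k\}$, the dual transition matrix is the transpose of a \emph{reversed-time-order} product of the primal factors, not of $\Phi_n$ itself, and reversed-order products of generic matrices need not share stability. I would resolve this by fully exploiting the positivity structure — either via the cyclic-spectrum identity sketched above combined with entrywise sandwiching estimates for nonnegative matrix products, or, alternatively, through the standard primal-dual duality of linear copositive Lyapunov functions for linear positive impulsive systems, as used in, e.g., \cite{Lawrence:10,Blanchini:15}, which simultaneously certifies the decay of both the primal and dual sampled trajectories.
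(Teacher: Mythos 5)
The paper states this proposition without any proof, so there is nothing to compare your argument against; judged on its own terms, your first step is sound and is the natural one. Sampling the flow at $t_k^+$ and at $t_k$ does give \eqref{eq:embedded} and \eqref{eq:embedded_swapped}, and the interpolation argument gives $(a)\Leftrightarrow(c)\Leftrightarrow(d)$ and, verbatim on \eqref{eq:dual}, $(b)\Leftrightarrow(e)\Leftrightarrow(f)$ --- with the small caveat that ``bounded linear transformation of its endpoint values'' needs $\sup_{\tau\in[0,T_k]}\lVert e^{A\tau}\rVert$ to be uniformly bounded in $k$, which fails if the dwell-times are unbounded and $A$ is not Hurwitz; some restriction of the kind the paper imposes in Section \ref{sec:imp:minimum} is needed there.

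The genuine gap is the bridge $(c)\Leftrightarrow(e)$, which you correctly isolate as the hard part but do not actually prove. The identity $J\Psi_n=\Phi_nJ$ only shows that, for each fixed $n$, $\Phi_n$ and $\Psi_n$ share their nonzero spectrum; for time-varying products the spectra of the partial transition matrices say nothing about whether the products tend to zero (products of nilpotent factors can diverge), and in any case this identity does not touch the actual difficulty, namely that the transition matrix of \eqref{eq:embedded_dual} is $\left(Je^{AT_0}\cdots Je^{AT_{n-1}}\right)^{\T}$, a \emph{reversed}-order product. Nonnegativity plus ``entrywise sandwiching'' cannot close this: take $M_0=\left[\begin{smallmatrix}0&1\\0&1\end{smallmatrix}\right]$ and $M_k=\left[\begin{smallmatrix}1&0\\0&0\end{smallmatrix}\right]$ for $k\ge1$; then $M_{n-1}\cdots M_0=\left[\begin{smallmatrix}0&1\\0&0\end{smallmatrix}\right]$ for all $n\ge2$ while $M_0M_1\cdots M_{n-1}=0$, so for general sequences of nonnegative matrices the forward and reversed products do \emph{not} share stability. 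Any correct proof must therefore exploit the specific structure $M_k=Je^{AT_k}$ (same $J$, same Metzler $A$, the variable factors lying in the one-parameter semigroup $\{e^{A\tau}\}$), which your argument never uses. The fallback via ``standard primal--dual duality of copositive Lyapunov functions'' is also not available off the shelf: for a single fixed, non-periodic sequence, asymptotic stability does not come with a linear copositive (or any time-invariant) certificate to dualize, and the duality in \cite{Lawrence:10} concerns the adjoint system, i.e.\ the transposed dynamics run in \emph{reversed} time, for which the equivalence is immediate but which is not the ``dual'' system \eqref{eq:dual} of the proposition. As it stands, the six-way equivalence has been reduced to, but not established at, its only nontrivial point.
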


The meaning of the above result is that we can choose the most convenient system to work with in order to derive stability and stabilization conditions for the original impulsive system \eqref{eq:mainsyst}. This result is also important as it will be used to demonstrate that despite the systems can be all equivalent in terms of stability, the obtained stability conditions will not be necessarily equivalent. This point is discussed for the case of positive switched  systems in \cite{Blanchini:15} and the same discussion remains valid in the case of linear positive impulsive systems. Notably, the difference between the results obtained using the system \eqref{eq:embedded} and the ``swapped" version \eqref{eq:embedded_swapped} will be emphasized in Section \ref{sec:imp:examples} and in Section \ref{sec:SW:examples}. The interest for using the dual system \eqref{eq:embedded_dual} and its ``swapped" version \eqref{eq:embedded_swapped_dual} will be emphasized in the sections related to stabilization; i.e. Section \ref{sec:stabz} and Section \ref{sec:switched}. \blue{Note that the term ``dual system" is used here in the same way as in \cite{Blanchini:15} where the system is obtained by simply replacing the matrices of the system by their transpose.}

%%%%%%%%%%%%%%%%%%%
%%%   KEEP FOR ARXIV  %%%%%%%
%%%%%%%%%%%%%%%%%%%
%Note also that when the sequence $\{t_k\}_{k\in\mathbb{N}_0}$ is such that $t_{k+1}-t_k=\bar T$ for all $k\ge k_0$, $k_0\in\mathbb{N}$, then all the discrete-time systems in Theorem \ref{th:equivalence}  become time-invariant after time $k_0$. Hence, the stability conditions reduce to standard stability conditions for linear time-invariant discrete-time systems and will all be equivalent with each other. This result can be easily be understood through the fact that $\rho(Je^{A\bar T})=\rho(e^{A\bar T}J)=\rho(J^\T e^{A^\T\bar T})=\rho(e^{A^\T\bar T})J^\T$, where $\rho(\cdot)$ denotes the spectral radius.

\blue{Finally, it seems interesting to mention that the above result pertains on the establishment of the asymptotic stability of the system \eqref{eq:mainsyst}. The results can be easily extended to the case of exponential stability by considering the change of variables $z(t)=e^{\alpha t}x(t)$, $\alpha>0$, and the resulting comparison system
  \begin{equation}
\begin{array}{rcl}
      \dot{z}(t)&=&(A+\alpha I)z(t),\ t\ne t_k\\
      z(t_k^+)&=&Jz(t),\ t=t_k.
\end{array}
  \end{equation}
  By applying the asymptotic stability results to the above system, we can conclude on the $\alpha$-exponential stability of the system  \eqref{eq:mainsyst}. Note that, in this case and for some given sequence $\{t_k\}_{k\in\mathbb{N}}$, the geometric convergence rate of the discrete-time system \eqref{eq:embedded} will be given by $\textstyle\sup_{k\in\mathbb{N}_{0}}e^{-\alpha T_k}$.  }

\section{Stability of linear positive impulsive systems}\label{sec:stab}

We derive here several stability results for linear positive impulsive systems. The case of arbitrary dwell-time ($T_k\in\mathbb{R}_{>0}$) is considered first in Section \ref{sec:imp:arbitrary} and is followed by stability results under constant dwell-time ($T_k=\bar T$), minimum ($T_k\ge\bar T$), maximum ($T_k\le\bar T$) and range dwell-time ($T_k\in[T_{min},T_{max}]$) in Section \ref{sec:imp:constant}, Section \ref{sec:imp:minimum}, Section \ref{sec:imp:maximum} and Section \ref{sec:imp:range}, respectively. Computational results are given in Section \ref{sec:imp:computation} together with some discussions regarding their conservatism. Comparative examples are finally given in Section \ref{sec:imp:examples}.

\subsection{Stability under arbitrary dwell-time}\label{sec:imp:arbitrary}

Let us consider first the stability under arbitrary dwell-time ($T_k\in\mathbb{R}_{>0}$), with the additional condition that the sequence $\{t_k\}$ grows unboundedly. We then have the following result:
\begin{theorem}\label{th:arbDT}
  Let $A\in\mathbb{R}^{n\times n}$ and $J\in\mathbb{R}^{n\times n}$ be a Metzler and a nonnegative matrix, respectively. Then, the following statements are equivalent:
  \begin{enumerate}[(a)]
    \item There exists a $\lambda\in\mathbb{R}_{>0}^n$ such that
    \begin{equation}\label{eq:arbDT1}
      \lambda^\T A<0\ \textnormal{and}\   \lambda^\T (J-I_n)<0.
    \end{equation}
  \item We have that
  \begin{equation}
    \ker\begin{bmatrix}
      I & -A & -\left(J-I_n\right)
    \end{bmatrix}\cap\mathbb{R}^{3n}_{\ge0}=\{0\}.
  \end{equation}
  \end{enumerate}
Moreover, when one of the above statements holds, then the system \eqref{eq:mainsyst} is asymptotically stable under arbitrary dwell-time; i.e. for any sequence of impulse times verifying $T_k\in(0,\infty)$.
\end{theorem}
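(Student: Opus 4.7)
The plan is to prove the equivalence (a) $\Leftrightarrow$ (b) by a theorem of alternatives, and then to derive the asymptotic stability from (a) using a linear copositive Lyapunov function of the form $V(x)=\lambda^\T x$.

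For the equivalence, I would stack the three inequalities of (a) into a single componentwise positivity condition. Setting $M:=\begin{bmatrix} I & -A & -(J-I_n)\end{bmatrix}\in\mathbb{R}^{n\times 3n}$, one checks that the three blocks of the row vector $\lambda^\T M$ are $\lambda^\T$, $-\lambda^\T A$ and $-\lambda^\T(J-I_n)$, so (a) is equivalent to the existence of some $\lambda\in\mathbb{R}^n$ with $\lambda^\T M>0$ componentwise. Statement (b) is the assertion that $Mx=0$, $x\ge 0$ forces $x=0$. These are exactly the two alternatives in Gordan's theorem (equivalently, a Farkas-type alternative applied to $M^\T$), from which (a) $\Leftrightarrow$ (b) follows.

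For the stability part, assume (a) and pick a corresponding $\lambda\in\mathbb{R}^n_{>0}$. Define $V(x):=\lambda^\T x$; since $\lambda>0$, $V$ is a positive definite function on the invariant orthant $\mathbb{R}^n_{\ge 0}$. Between impulses, $\dot V(x(t))=\lambda^\T Ax(t)$. From $\lambda^\T A<0$ componentwise and the finite dimension, there is $\eta>0$ with $\lambda^\T A\le -\eta\mathds{1}_n^\T$; combining with $x(t)\ge 0$ and the trivial bound $\mathds{1}_n^\T x\ge V(x)/\max_i\lambda_i$ yields $\dot V(x(t))\le -\alpha V(x(t))$ for some $\alpha>0$. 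At impulse instants, $V(x(t_k^+))-V(x(t_k))=\lambda^\T(J-I_n)x(t_k)$, and the strict inequality $\lambda^\T(J-I_n)<0$ similarly provides a $\beta>0$ such that $V(x(t_k^+))\le \rho V(x(t_k))$ for a uniform factor $\rho\in(0,1)$ depending only on $A$, $J$ and $\lambda$.

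Putting these together, $V(x(\cdot))$ is nonincreasing (hence Lyapunov stability) and is contracted by the factor $\rho$ at each jump. Since $\{t_k\}$ is strictly increasing and unbounded, there are infinitely many jumps on $[t_0,\infty)$, so $V(x(t_k))\to 0$ as $k\to\infty$; because $V$ does not grow between jumps, $V(x(t))\to 0$, and from $\lambda>0$ and $x(t)\ge 0$ we conclude $x(t)\to 0$. Crucially, $\alpha$, $\beta$ and $\rho$ are independent of the particular sequence $\{T_k\}_{k\in\mathbb{N}}\subset(0,\infty)$, which is what delivers arbitrary-dwell-time stability. The only genuinely delicate step is the appeal to the theorem of alternatives for (a) $\Leftrightarrow$ (b); the Lyapunov argument itself is essentially mechanical once $\lambda$ is in hand, although one must resist the temptation to rely on the continuous decay alone (which is useless as $T_k\downarrow 0$) or on the jump contraction alone (which is useless as $T_k\to\infty$) and instead use both regimes in combination.
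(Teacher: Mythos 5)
Your proposal is correct. The stability half is essentially the paper's own argument: the same linear copositive Lyapunov function $V(x)=\lambda^\T x$, with $\lambda^\T A<0$ giving exponential decay of $V$ along the flow and $\lambda^\T(J-I_n)<0$ giving a uniform contraction factor $\rho\in(0,1)$ at the jumps, the two combined with the unboundedness of $\{t_k\}$ to force $V(x(t))\to0$ independently of the dwell-time sequence. Where you diverge is the equivalence (a) $\Leftrightarrow$ (b): the paper observes that (a) is precisely the existence of a common linear copositive Lyapunov function for the switched pair $M_1=A$, $M_2=J-I_n$ and then invokes Theorem~1 of Fornasini--Valcher as a black box, whereas you prove the alternative directly by noting that $\lambda^\T\begin{bmatrix}I & -A & -(J-I_n)\end{bmatrix}>0$ encodes all three inequalities (the first block forcing $\lambda>0$) and applying Gordan's theorem to $M^\T$. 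Your route is self-contained and makes transparent exactly which separation argument is at work; the paper's route is shorter and situates the condition inside the existing switched-positive-systems literature, which it exploits again in Section~5. Both are valid; your identification that the first block of $\lambda^\T M$ absorbs the positivity constraint on $\lambda$ is the one small point that needs to be (and is) stated for Gordan's theorem to deliver the equivalence in the exact form (b).
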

\begin{proof}
\blue{We prove first that (a) implies the asymptotic stability of the system \eqref{eq:mainsyst} under arbitrary dwell-time. To this aim, let us consider the linear copositive Lyapunov function $V(x)=\lambda^\T x$ where $\lambda\in\mathbb{R}^n_{>0}$.} The first condition in \eqref{eq:arbDT1} is equivalent to saying that $\dot{V}(x(t))\le-\mu V(x(t))$ for some $\mu\in\mathbb{R}_{>0}$ whereas the second one is equivalent to saying that $V(x(t_k^+))\le\eps V(x(t_k))$ for some $\eps\in(0,1)$ and all $k\in\mathbb{N}$. These conditions, all together, imply that $V(x(t))\to0$ as $t\to\infty$ regardless the impulse sequence $\{t_k\}_{k\in\mathbb{N}}$. \bluee{To prove the equivalence with (b), we first remark that the conditions of statement (a) coincide with the stability conditions obtained for the linear positive switched system $\dot{z}=M_{\sigma}z$ with $\sigma\in\{1,2\}$, $M_1=A$ and $M_2=J-I_n$ using a common copositive Lyapunov function. Using now \cite[Theorem 1]{Fornasini:10}, the equivalence with (b) directly follows.}
\end{proof}

\begin{remark}[Dual conditions]\label{rem:dual_arbDT}\label{rem:arbDT2}
  Dual conditions can be obtained by substituting the matrices of the dual system \eqref{eq:dual} inside the conditions of Theorem \ref{th:arbDT}. In such a case, the conditions of statement (a) become
     \begin{equation}\label{eq:arbDT2}
      A\lambda<0\ \textnormal{and}\   (J-I_n)\lambda<0.
    \end{equation}
    whereas that of statement (b) becomes
     \begin{equation}
    \ker\begin{bmatrix}
      I & -A^\T & -\left(J-I_n\right)^\T
    \end{bmatrix}\cap\mathbb{R}^{3n}_{\ge0}=\{0\}.
  \end{equation}
\bluee{It can be shown that the conditions \eqref{eq:arbDT2} could have also been obtained by considering the polyhedral Lyapunov function $\textstyle V(x)=\max_{i=1}^N\{\lambda_i^{-1}x_i\}$. Such functions have been successfully used for the analysis of LPV and linear switched systems in \cite{Blanchini:07,Blanchini:08} and linear positive switched systems in \cite{Blanchini:15}.}
\end{remark}

\begin{remark}[Persistent flowing]\label{rem:arbDT1_flow}
    In the case of persistent flowing (i.e. the flow never stops) the conditions in Theorem \ref{th:arbDT}, (a), can be relaxed to
  \begin{equation}\label{eq:arbDT1_flow}
      \lambda^\T A<0\ \textnormal{and}\   \lambda^\T (J-I_n)\le0
    \end{equation}
    and those in Remark \ref{rem:arbDT2}, (a), to
      \begin{equation}%\label{eq:arbDT2_flow}
      A\lambda<0\ \textnormal{and}\   (J-I_n)\lambda\le0.
    \end{equation}
    \blue{The interpretation of these conditions is that when the flow persists, we simply need to find a Lyapunov function that is decreasing along the flow of the system and non-increasing at the jumps. We can see that the conditions \eqref{eq:arbDT1_flow} exactly characterize this for the candidate Lyapunov function $V(x)=\lambda^T$, $\lambda\in\mathbb{R}^n_{>0}$; i.e. $\dot{V}(x)<0$ and $V(Jx)-V(x)\le0$ for all $x\in\mathbb{R}^n_{\ge0}$, $x\ne0$.}
\end{remark}

The following example illustrates the discussion below Proposition \ref{th:equivalence} about the non-equivalence between the stability conditions obtained from the impulsive system \eqref{eq:mainsyst} and its dual \eqref{eq:dual}:
\begin{example}
Let us consider the system \eqref{eq:mainsyst} with matrices
\begin{equation}
  A=\dfrac{1}{2}\begin{bmatrix}
    -3 & 1\\
    1/3 & -1
  \end{bmatrix}\ \textnormal{and}\   J=\dfrac{1}{2}\begin{bmatrix}
    1 & 1/2\\
    1 & 0
  \end{bmatrix}.
\end{equation}
There is no vector $\lambda\in\mathbb{R}^n_{>0}$ such that the conditions \eqref{eq:arbDT1} hold since $\lambda^\T A<0$ implies that $\lambda_1-\lambda_2<0$ while $\lambda^\T(J-I_n)<0$ implies that $-\lambda_1+\lambda_2<0$, yielding then a contradiction. On the other hand, we can readily see that $\lambda=\begin{bmatrix}
  1 & 1
\end{bmatrix}^\T$ solves the conditions \eqref{eq:arbDT2}, emphasizing then the gap between the conditions of Theorem \ref{th:arbDT} and Remark \ref{rem:arbDT2}.
\end{example}

\subsection{Stability under constant dwell-time}\label{sec:imp:constant}

We consider now the case of constant dwell-times -- the case where $T_k=\bar{T}$, for some $\bar T>0$ and for all $k\in\mathbb{N}$ -- or, in other words, the case where jumps occur periodically. \bluee{Even if quite restrictive, this case will be useful for deriving the results in Section \ref{sec:imp:minimum} and Section \ref{sec:imp:maximum}. The following result can be seen as a ``positive systems" counterpart of the constant dwell-time result in \citep{Briat:11l,Briat:12h,Briat:13b} and provides a stability condition in terms of the discrete-time system \eqref{eq:embedded}. It can also be connected to the result obtained in \cite{Zhang:14b}:}
\begin{theorem}[Stability under constant dwell-time]\label{th:cstDT2}
 Let $A\in\mathbb{R}^{n\times n}$ and $J\in\mathbb{R}^{n\times n}$ be a Metzler and a nonnegative matrix, respectively. Then, the following statements are equivalent:
 \begin{enumerate}[(a)]
  \item\label{item:th1:0} The linear positive impulsive system \eqref{eq:mainsyst} is asymptotically stable under constant dwell-time $\bar{T}$.
   \item\label{item:th1:1} The copositive linear form $V(x(t))=\lambda^\T x(t)$, $\lambda\in\mathbb{R}^n_{>0}$ is a discrete-time Lyapunov function for the $\bar{T}$-periodic impulsive system \eqref{eq:mainsyst} in the sense that the inequality
        \begin{equation}\label{eq:kdlsjdlsglab902}
          V(x(t^+_{k+1}))-V(x(t^+_{k}))\le -\mu^\T x(t^+_k)
        \end{equation}
     holds for some $\mu>0$, all $x(t_k)\in\mathbb{R}^n_{\ge0}$ and all $k\in\mathbb{N}$.
   \item\label{item:th1:2} There exists a vector $\lambda\in\mathbb{R}_{>0}^n$ such that the inequality
    \begin{equation}\label{eq:stabmono2}
  \lambda^\T\left[Je^{A\bar{T}}-I_n\right]<0
    \end{equation}
holds or, equivalently, the matrix $Je^{A\bar{T}}$ is Schur stable.
\item\label{item:th1:3} There exists a vector $\lambda\in\mathbb{R}_{>0}^n$ such that the inequality
    \begin{equation}
  \left[Je^{A\bar{T}}-I_n\right]\lambda<0
    \end{equation}
holds or, equivalently, the matrix $e^{A^\T\bar{T}}J^\T$ is Schur stable.
   \item\label{item:th1:4} There exist a differentiable vector-valued function $\zeta:[0,\bar{T}]\mapsto\mathbb{R}^n$, $\zeta(\bar T)\in\mathbb{R}_{>0}^n$, and a scalar $\eps>0$ such that the inequalities
  \begin{equation}\label{eq:c1b2}
    \zeta(\tau)^\T A -\dot{\zeta}(\tau)^\T\le0
  \end{equation}
  and
  \begin{equation}\label{eq:c2b2}
  \zeta(\bar{T})^\T J-\zeta(0)^\T+\eps \mathds{1}_n^\T\le0
  \end{equation}
  hold for all $\tau\in[0,\bar{T}]$.
   \item\label{item:th1:5} There exist a differentiable vector-valued function $\xi:[0,\bar{T}]\mapsto\mathbb{R}^n$, $\xi(0)\in\mathbb{R}_{>0}^n$, and a scalar $\eps>0$ such that the LMIs
  \begin{equation}\label{eq:c1c2}
    \xi(\tau)^\T A+\dot{\xi}(\tau)^\T \le0
  \end{equation}
  and
  \begin{equation}\label{eq:c1c3}
   \xi(0)^\T J-\xi(\bar{T})^\T +\eps \mathds{1}_n^\T \le0
  \end{equation}
  hold for all $\tau\in[0,\bar{T}]$.
   \end{enumerate}
  \end{theorem}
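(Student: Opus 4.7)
The plan is to prove the chain (a) $\Leftrightarrow$ (c) $\Leftrightarrow$ (b), (c) $\Leftrightarrow$ (d), (c) $\Leftrightarrow$ (f), and (f) $\Leftrightarrow$ (e), which covers all six statements in a convenient order. The starting point is Proposition \ref{th:equivalence}, item (\ref{item:equi:b}), which reduces asymptotic stability of \eqref{eq:mainsyst} under constant dwell-time $\bar T$ to Schur stability of the embedded system $x_{k+1}=Je^{A\bar T}x_k$. Because $J\ge0$ and $A$ Metzler imply $e^{A\bar T}\ge0$, the matrix $Je^{A\bar T}$ is nonnegative, and Proposition \ref{prop:Schur} immediately yields (a) $\Leftrightarrow$ (c) by picking $\lambda$ as a positive left Perron-type vector; (a) $\Leftrightarrow$ (d) follows from the same argument applied on the right, equivalently from Proposition \ref{th:equivalence}, item (\ref{item:equi:e}), applied to $e^{A^\T\bar T}J^\T$. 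The equivalence (b) $\Leftrightarrow$ (c) is then a one-line computation: along the embedded dynamics $V(x(t_{k+1}^+))-V(x(t_k^+))=\lambda^\T(Je^{A\bar T}-I_n)x(t_k^+)$, so the existence of $\mu>0$ satisfying \eqref{eq:kdlsjdlsglab902} for all $x(t_k^+)\in\mathbb{R}_{\ge0}^n$ is equivalent to $\lambda^\T(Je^{A\bar T}-I_n)<0$.

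The nontrivial part is to connect (c) with the clock-dependent formulation (f). The idea is to construct $\xi$ explicitly. Using Proposition \ref{th:equivalence}, item (\ref{item:equi:c}) (swapped embedded system), Schur stability of $Je^{A\bar T}$ is equivalent to that of $e^{A\bar T}J$ (which also follows from the fact that $AB$ and $BA$ share the characteristic polynomial), so Proposition \ref{prop:Schur} gives a $\tilde\lambda\in\mathbb{R}^n_{>0}$ with $\tilde\lambda^\T(e^{A\bar T}J-I_n)<0$. I would then set $\xi(\tau)^\T:=\tilde\lambda^\T e^{A(\bar T-\tau)}$. A direct differentiation shows $\xi(\tau)^\T A+\dot\xi(\tau)^\T=0$, so \eqref{eq:c1c2} holds with equality; positivity $\xi(0)>0$ is obtained because $A$ Metzler forces $e^{A\bar T}$ to have strictly positive diagonal, so for each index $j$, $[\tilde\lambda^\T e^{A\bar T}]_j\ge\tilde\lambda_j[e^{A\bar T}]_{jj}>0$; finally, the jump condition \eqref{eq:c1c3} reads $\tilde\lambda^\T(e^{A\bar T}J-I_n)+\eps\mathds{1}_n^\T\le0$, which holds for $\eps>0$ small enough by strictness. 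Conversely, (f) $\Rightarrow$ (a) is handled by taking the clock-dependent copositive functional $W(t)=\xi(\tau(t))^\T x(t)$, where $\tau$ is the time elapsed since the last impulse; the flow inequality \eqref{eq:c1c2} implies $\dot W\le0$ between jumps, and the jump inequality \eqref{eq:c1c3} yields $W(t_k^+)\le W(t_k)-\eps\mathds{1}_n^\T x(t_k)$, whence $x(t)\to 0$.

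The last equivalence (e) $\Leftrightarrow$ (f) is the time-reversal $\zeta(\tau):=\xi(\bar T-\tau)$: one checks that $\dot\zeta(\tau)^\T=-\dot\xi(\bar T-\tau)^\T$, so \eqref{eq:c1c2} becomes \eqref{eq:c1b2}, and the boundary roles swap, turning \eqref{eq:c1c3} into \eqref{eq:c2b2} with $\zeta(\bar T)=\xi(0)>0$. The main obstacle I expect is the bookkeeping around the two matrix products $Je^{A\bar T}$ versus $e^{A\bar T}J$ when passing between (c), (d), and (f), which must be reconciled either via the swapped-embedded equivalence in Proposition \ref{th:equivalence} or by the characteristic-polynomial identity; once that is understood, ensuring positivity of $\xi(0)$ at the boundary and verifying the Lyapunov-decrease bookkeeping across jumps are routine.
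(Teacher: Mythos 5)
Your proof is correct and its overall architecture is the same as the paper's: both reduce statement (a) to Schur stability of the monodromy matrix via Proposition \ref{th:equivalence}, dispatch (b)--(d) through Propositions \ref{prop:Hurwitz}--\ref{prop:Schur} and the one-line computation $V(x(t_{k+1}^+))-V(x(t_k^+))=\lambda^\T(Je^{A\bar T}-I_n)x(t_k^+)$, and certify the clock-dependent statements with explicit matrix-exponential functions, recovering the algebraic condition in the converse direction (the paper integrates the flow inequality against $e^{A\tau}\ge0$; your functional $W(t)=\xi(\tau(t))^\T x(t)$ is a sound alternative because $W(t_k^+)=\xi(0)^\T x(t_k^+)\ge0$ supplies the needed lower bound even though $\xi(\tau)$ is not required to be nonnegative for interior $\tau$). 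The one genuine difference is which certificate you build first and from which vector. The paper starts from $\lambda$ with $\lambda^\T Je^{A\bar T}<\lambda^\T$ and sets $\zeta^*(\tau)=e^{A^\T(\tau-\bar T)}\lambda$, so $\zeta^*(0)^\T=\lambda^\T e^{-A\bar T}$ and the jump inequality $\lambda^\T J<\lambda^\T e^{-A\bar T}$ must be extracted from $\left(\lambda^\T J-\lambda^\T e^{-A\bar T}\right)e^{A\bar T}<0$, a step that requires extra care since $e^{-A\bar T}$ is in general not a nonnegative matrix. You instead seed the construction with the swapped-system vector $\tilde\lambda^\T(e^{A\bar T}J-I_n)<0$ (legitimate, since $Je^{A\bar T}$ and $e^{A\bar T}J$ share their characteristic polynomial, so Proposition \ref{prop:Schur} applies to the nonnegative matrix $e^{A\bar T}J$) and set $\xi(\tau)^\T=\tilde\lambda^\T e^{A(\bar T-\tau)}$; both boundary evaluations then land exactly on $\tilde\lambda^\T(e^{A\bar T}J-I_n)$, the jump condition is immediate, and positivity of $\xi(0)=e^{A^\T\bar T}\tilde\lambda$ follows from the positive diagonal of $e^{A\bar T}$. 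Time reversal gives $\zeta(\tau)^\T=\tilde\lambda^\T e^{A\tau}$ for statement (e). This mirror-image route proves the same equivalences with cleaner bookkeeping at the boundary, and is worth noting for that reason.
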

  \begin{proof}
    \noindent\textbf{Proof of \eqref{item:th1:0} $\boldsymbol{\Leftrightarrow}$ \eqref{item:th1:2}:} This follows from Proposition \ref{th:equivalence} and Proposition \ref{prop:Schur}.

\noindent\textbf{Proof of \eqref{item:th1:1} $\boldsymbol{\Leftrightarrow}$ \eqref{item:th1:2}:} It is immediate to see that the left-hand side of \eqref{eq:kdlsjdlsglab902} is given by
\begin{equation}
  \lambda^\T \left[Je^{A\bar{T}}-I_n\right]^\T x(t_k).
\end{equation}
The equivalence between the two statements immediately follows.

\noindent\textbf{Proof of \eqref{item:th1:2} $\boldsymbol{\Leftrightarrow}$ \eqref{item:th1:3}:} This follows from Proposition \ref{prop:Hurwitz} or  Proposition \ref{prop:Schur}.

\noindent\blue{\textbf{Proof of \eqref{item:th1:4} $\boldsymbol{\Rightarrow}$ \eqref{item:th1:2}:} Assume that the statement (e) holds. From \eqref{eq:c1b2}, after integration from 0 to $\bar{T}$, we find that
\begin{equation}\label{eq:djskodsjkklkl}
    \zeta(0)^\T e^{A\bar{T}}-\zeta(\bar T)^\T \le0.
\end{equation}
From \eqref{eq:c2b2}, we have that
  \begin{equation}
  \zeta(\bar{T})^\T J+\eps \mathds{1}_n^\T\le\zeta(0)^\T
  \end{equation}
  which, together with \eqref{eq:djskodsjkklkl}, implies that
  \begin{equation}
  \zeta(\bar T)^\T\left( J e^{A\bar T}-I_n\right)+\eps \mathds{1}_n^\T e^{A\bar T}\le0
  \end{equation}
which implies, in turn, that the condition \eqref{eq:stabmono2} holds with $\lambda=\zeta(\bar T)$.}

\noindent\blue{\textbf{Proof of \eqref{item:th1:2} $\boldsymbol{\Rightarrow}$ \eqref{item:th1:4}}: Assume that \eqref{eq:stabmono2} holds for some $\lambda\in\mathbb{R}_{>0}^n$. Define then the vector-valued function $\zeta^*(\tau) = e^{A^\T(\tau-\bar T)}\lambda$, $\tau\in[0,\bar T]$, where $\lambda$ is as in \eqref{eq:stabmono2}. Clearly, we have that
\begin{equation}
  \zeta^*(\tau)^\T A-\dot{\zeta}^*(\tau)^\T=0
\end{equation}
for all $\tau\in[0,\bar T]$ and hence the function $\zeta=\zeta^*$ verifies the inequality \eqref{eq:c1b2}.  From the definition of $\zeta^*(\tau)$, we have that $\zeta^*(\bar T)=\lambda$ and hence
\begin{equation}\label{kdlskdmls}
  \zeta^*(\bar T)^\T e^{-A\bar T}-\zeta^*(0)^\T=0.
\end{equation}
From \eqref{eq:stabmono2}, we have that $\lambda^\T Je^{A\bar T}<\lambda^\T$ or, equivalently, that $\zeta^*(\bar T)^\T Je^{A\bar T}<\zeta^*(\bar T)^\T$ which together with \eqref{kdlskdmls} implies that
\begin{equation}\label{kdlskdmls2}
  \zeta^*(\bar T)^\T J-\zeta^*(0)^\T<0
\end{equation}
which proves that the condition \eqref{eq:c2b2} holds with $\zeta=\zeta^*$ for some sufficiently small $\eps>0$.}

\noindent\textbf{Proof of \eqref{item:th1:2} $\boldsymbol{\Leftrightarrow}$ \eqref{item:th1:5}:} The proof between the two statements follows from the definition that $\xi(\tau)=\zeta(T-\tau)$. Alternatively, similar arguments as in the proof that \eqref{item:th1:2} $\boldsymbol{\Leftrightarrow}$ \eqref{item:th1:4} can also be considered.
  \end{proof}

\blue{\begin{remark}[Swapped system]\label{rem:swapped_constant}
Theorem \ref{th:cstDT2} characterizes the stability of the system via the Schur stability of the nonnegative matrix $Je^{A\bar T}$. Alternatively, the asymptotic stability of the system can be established via the Schur stability of the matrix $e^{A\bar T}J$. In this case, the condition \eqref{eq:stabmono2} naturally changes to $\lambda^\T\left[e^{A\bar{T}}J-I_n\right]<0$. Interestingly, the affine conditions given in the statements  \eqref{item:th1:4} and  \eqref{item:th1:5} remain the same with the exception that we require now the positivity of $\zeta(0)$ in place of the positivity of $\zeta(\bar T)$ in the former and the positivity of $\xi(\bar T)$ in place of $\xi(0)$ in the latter.
\end{remark}}

The main advantages of the conditions of statements \eqref{item:th1:4} and \eqref{item:th1:5} in the above results are the following.  First of all, the conditions are affine in the matrices $A$ and $J$ of the system, allowing then for an immediate extension to uncertain matrices and to control design (the latter requiring some additional steps that will be detailed in Section \ref{sec:stabz}). A particularity of the approach is that we only need the vectors $\zeta(\bar T)$ and $\xi(0)$ to be positive while most of the Lyapunov approaches would require the whole vector-valued functions $\zeta(\tau)$ and $\xi(\tau)$ to be positive on their domain (see e.g. \cite{Goebel:09}), a constraint that is computationally way more complex. The explanation is that, in spite of the fact that the conditions in statements \eqref{item:th1:4} and statement \eqref{item:th1:5} look like continuous-time stability conditions, they are in fact lifted discrete-time stability condition for which only the behavior of the inequalities at the extremal points of the domain of the lifting variable (i.e. the points $\tau=0$ and $\tau=\bar T$) are actually meaningful. The same discussion is made in \citep{Briat:11l,Briat:12h,Briat:13b} in the context of looped-functionals and in \cite{Briat:13d,Briat:14f,Briat:15f} in the context of clock-dependent conditions.

The main drawback of this procedure lies in the increase of the computational complexity as the conditions of statement \eqref{item:th1:4} and statement \eqref{item:th1:5} are infinite-dimensional linear programming conditions which may be hard to solve. Note, however, that they should be less complex than the infinite-dimensional semidefinite programming conditions obtained in \cite{Briat:13d,Briat:14f,Briat:15f}. Verifying these conditions in an efficient way will be the topic of Section \ref{sec:imp:computation}.

\subsection{Stability under minimum dwell-time}\label{sec:minimum}\label{sec:imp:minimum}

Let us consider now the minimum dwell-time case. In this setup, the dwell-times are assumed to satisfy the condition $T_k\ge\bar T$, $k\in\mathbb{N}$. Even though we do not assume here the existence of an upper-bound for the dwell-times a priori, we will restrict ourselves to the case of persistent impulses; i.e. for any chosen sequence $\{t_k\}_{k\in \mathbb{N}}$, there exists an upper-bound for the dwell-times. Hence, we exclude the case that $T_{k^*}=\infty$ for some $k^*\in\mathbb{N}$, which is not restrictive, as in such a case, the system would become a standard continuous-time linear positive system.

\bluee{The following result can be seen as a ``positive systems" counterpart of the results in \citep{Briat:11l,Briat:12h,Briat:13b} and provides a stability condition in terms of the discrete-time system \eqref{eq:embedded}. It can also be connected to \cite{Zhang:14b}:}
\begin{theorem}[Minimum dwell-time]\label{th:minDT2}
Let $A\in\mathbb{R}^{n\times n}$ and $J\in\mathbb{R}^{n\times n}$ be a Metzler and a nonnegative matrix, respectively. Then, the following statements are equivalent:
\begin{enumerate}[(a)]
  \item The linear form $V(x(t))=\lambda^\T x(t)$, $\lambda\in\mathbb{R}^n_{>0}$ is a Lyapunov function for the system \eqref{eq:mainsyst} in the sense that
      \begin{equation}
        \dot{V}(x(t))\le-\mu^\T x(t),\ t\in(t_k,t_{k+1})
      \end{equation}
      and
      \begin{equation}
        V(x(t^+_{k+1}))-V(x(t^+_k))\le-\nu^\T x(t^+_k)
      \end{equation}
      hold for some $\mu,\nu>0$, for all $x(t),x(t_k)\in\mathbb{R}_{\ge0}^n$ and for any sequence $\{t_k\}_{k\in\mathbb{N}}$ verifying $T_k\ge\bar T$, $k\in\mathbb{N}$.
      \item There exists a vector $\lambda\in\mathbb{R}_{>0}^n$ such that the inequalities
  \begin{equation}
    \lambda^\T A<0
  \end{equation}
  and
  \begin{equation}
    \lambda^\T Je^{A\theta}-\lambda^\T<0
  \end{equation}
  hold for all $\theta\ge\bar{T}$.
      \item There exists a vector $\lambda\in\mathbb{R}_{>0}^n$ such that the conditions
  \begin{equation}
    \lambda^\T A<0
  \end{equation}
  and
  \begin{equation}
  \lambda^\T Je^{A\bar T}-\lambda^\T<0
  \end{equation}
  hold.
  \item The pair $\left(A,Je^{A\bar{T}}-I\right)$ admits a common linear copositive Lyapunov function.
  \item We have that
  \begin{equation}
    \ker\begin{bmatrix}
      I & -A & -\left(Je^{A\bar{T}}-I\right)
    \end{bmatrix}\cap\mathbb{R}^{3n}_{\ge0}=\{0\}.
  \end{equation}
   \item There exist a matrix function $\zeta:[0,\bar{T}]\mapsto\mathbb{R}^n$, $\zeta(\bar T)\in\mathbb{R}_{>0}^n$, and a scalar $\eps>0$ such that the inequalities
\begin{equation}
      \zeta(\bar T)^\T A<0
    \end{equation}
  \begin{equation}
    \zeta(\tau)^\T A-\dot{\zeta}(\tau)^\T\le0
  \end{equation}
  and
  \begin{equation}
    \zeta(\bar{T})^\T J-\zeta(0)^\T +\eps \mathds{1}^\T \le0
  \end{equation}
 hold for all $\tau\in[0,\bar{T}]$.
     \item There exist a matrix function $\xi:[0,\bar{T}]\mapsto\mathbb{R}^n$, $\xi(0)\in\mathbb{R}_{>0}^n$, and a scalar $\eps>0$ such that the inequalities
\begin{equation}
      \xi(0)^\T A<0
    \end{equation}
  \begin{equation}
    \xi(\tau)^\T A+\dot{\xi}(\tau)^\T\le0
  \end{equation}
  and
  \begin{equation}
    \xi(0)^\T J-\xi(\bar{T})^\T +\eps \mathds{1}^\T \le0
  \end{equation}
 hold for all $\tau\in[0,\bar{T}]$.
\end{enumerate}
 Moreover, when one of the above statements holds, the linear positive impulsive system (\ref{eq:mainsyst}) is asymptotically stable under minimum dwell-time $\bar T$.\mendth
\end{theorem}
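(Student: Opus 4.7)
The overall strategy mirrors that of Theorem \ref{th:cstDT2}: first establish the finite-dimensional equivalences (a)--(e) using the linear copositive Lyapunov function $V(x)=\lambda^\T x$, then derive the lifted clock-dependent statements (f) and (g) by reduction to the constant dwell-time case. I would first verify (a) $\Leftrightarrow$ (b) by direct computation. Along the flow, $\dot V(x(t))=\lambda^\T A x(t)$, and the existence of a $\mu>0$ with $\lambda^\T A x\le -\mu^\T x$ for every $x\in\mathbb{R}_{\ge0}^n$ is equivalent, upon testing on standard basis vectors, to $\lambda^\T A<0$. At a jump, $x(t_{k+1}^+)=Je^{AT_k}x(t_k^+)$, so the discrete Lyapunov inequality reads $\lambda^\T(Je^{AT_k}-I_n)x(t_k^+)\le -\nu^\T x(t_k^+)$; since $T_k$ ranges over $[\bar T,\infty)$ and $x(t_k^+)$ over $\mathbb{R}_{\ge0}^n$, this is equivalent to the uniform inequality in (b).

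Next I would establish (b) $\Leftrightarrow$ (c). The direction (b) $\Rightarrow$ (c) is immediate upon taking $\theta=\bar T$. For (c) $\Rightarrow$ (b), the key is to decompose
\[
\lambda^\T Je^{A\theta}=(\lambda^\T Je^{A\bar T})\, e^{A(\theta-\bar T)}=\lambda^\T e^{A(\theta-\bar T)}-u^\T e^{A(\theta-\bar T)},
\]
with $u^\T:=\lambda^\T-\lambda^\T Je^{A\bar T}>0$. The hypothesis $\lambda^\T A<0$ combined with $e^{As}\ge 0$ for $s\ge 0$ shows that $s\mapsto\lambda^\T e^{As}$ is componentwise nonincreasing, so $\lambda^\T e^{A(\theta-\bar T)}\le\lambda^\T$; and the strict positivity of the diagonal entries of $e^{A(\theta-\bar T)}$ (a structural consequence of $A$ being Metzler) makes $u^\T e^{A(\theta-\bar T)}>0$. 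Combining both yields $\lambda^\T Je^{A\theta}<\lambda^\T$ for all $\theta\ge\bar T$. The equivalence (c) $\Leftrightarrow$ (d) is the very definition of a common linear copositive Lyapunov function for the pair $(A,\, Je^{A\bar T}-I_n)$, and (d) $\Leftrightarrow$ (e) follows from \cite[Theorem 1]{Fornasini:10} exactly as in the proof of Theorem \ref{th:arbDT}.

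For the lifted equivalences, I would reduce (f) and (g) to their analogues in Theorem \ref{th:cstDT2}. Specifically, statement \eqref{item:th1:4} of that theorem already delivers the equivalence between the one-shot inequality $\lambda^\T(Je^{A\bar T}-I_n)<0$ and the existence of a $\zeta$ satisfying the second and third inequalities in (f); the additional requirement $\zeta(\bar T)^\T A<0$ is automatic from the construction $\zeta^*(\tau)=e^{A^\T(\tau-\bar T)}\lambda$ used in that proof, since $\zeta^*(\bar T)=\lambda$ and $\lambda^\T A<0$ by the first half of (c). Conversely, (f) $\Rightarrow$ (c) is obtained by right-multiplying $\dot\zeta(\tau)^\T-\zeta(\tau)^\T A\ge 0$ by the nonnegative matrix $e^{A(\bar T-\tau)}$ and integrating over $[0,\bar T]$ to get $\zeta(0)^\T e^{A\bar T}\le\zeta(\bar T)^\T$; combining with the jump inequality yields $\zeta(\bar T)^\T Je^{A\bar T}\le\zeta(\bar T)^\T-\eps\mathds{1}^\T e^{A\bar T}<\zeta(\bar T)^\T$, and $\zeta(\bar T)^\T A<0$ is an explicit hypothesis. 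The equivalence (c) $\Leftrightarrow$ (g) is then obtained by the time-reversal substitution $\xi(\tau)=\zeta(\bar T-\tau)$, which swaps the role of the endpoints and flips the sign of the derivative.

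The ``moreover'' clause follows from (a) by a routine comparison argument: on each flow interval $(t_k,t_{k+1}]$ the linear copositive function $V$ decays exponentially at a rate uniformly bounded below (because $\mu,\lambda\in\mathbb{R}^n_{>0}$), and at each impulse $V$ contracts by a multiplicative factor strictly less than one; since $T_k\ge\bar T$, these estimates combine to give $V(x(t))\to 0$, hence $x(t)\to 0$ by strict positivity of $\lambda$. The main technical obstacle is the direction (c) $\Rightarrow$ (f): the candidate $\zeta^*(\tau)=e^{A^\T(\tau-\bar T)}\lambda$ discharges the flow inequality with equality and makes $\zeta^*(\bar T)=\lambda>0$ and $\zeta^*(\bar T)^\T A<0$ trivial, but the jump inequality eventually involves $e^{-A\bar T}$, which is generally not nonnegative. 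This subtle step is precisely the one handled in the proof of Theorem \ref{th:cstDT2}, and is what the reduction above is designed to reuse.
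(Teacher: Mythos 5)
Your proposal is correct and follows essentially the same route as the paper: the only nontrivial step, (c) $\Rightarrow$ (b), is handled by the same decomposition $\lambda^\T Je^{A\theta}-\lambda^\T=\bigl(\lambda^\T Je^{A\bar T}-\lambda^\T\bigr)e^{A(\theta-\bar T)}+\lambda^\T\bigl(e^{A(\theta-\bar T)}-I_n\bigr)$ (you even make explicit the positive-diagonal property of $e^{As}$ that the paper leaves implicit), and statements (f), (g) are obtained by reusing the argument of Theorem \ref{th:cstDT2} exactly as the paper does. One phrase in your ``moreover'' sketch is inaccurate: under a minimum dwell-time the jump map $J$ may be expansive, so $V$ need not contract \emph{at} each impulse; the contraction guaranteed by (a) is that of the composed flow-plus-jump map $x(t_k^+)\mapsto x(t_{k+1}^+)$, which, together with the monotone decay of $V$ along the flow, still yields $V(x(t))\to0$.
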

\begin{proof}
The proof that (a) is equivalent to (b) follows from the same arguments as in the proof of Theorem \ref{th:minDT2}. The proof that (b) implies (c) is immediate. To prove the converse, we assume that $\lambda^\T Je^{A\bar T}-\lambda^\T<0$, which implies that
\begin{equation}\label{dksdkhfdkdjdkd}
  \lambda^\T Je^{A(\bar T+s)}-\lambda^\T e^{As}<0,\ s\ge0
\end{equation}
where we have used the fact that $e^{As}\ge0$ for all $s\ge0$. Note also that since $\lambda^TA<0$, then we have that $\lambda^\T (e^{As}-I_n)\le0$ for all $s\ge0$. Finally, adding the latter inequality to \eqref{dksdkhfdkdjdkd} implies that
  \begin{equation}
     \lambda^\T Je^{A(\bar T+s)}-\lambda^\T<0
  \end{equation}
  holds for all $s\ge0$, which proves the implication. The rest of the proof can be carried out as for Theorem \ref{th:minDT2}.
\end{proof}

\blue{\begin{remark}[Swapped system]\label{rem:swapped_minimum}
As in the constant dwell-time case, a stability condition involving the swapped system \eqref{eq:embedded_swapped} can be formulated. In this case, the conditions of Theorem \ref{th:minDT2}, (c)  become
\begin{equation}\label{eq:minDTrem1}
    \lambda^\T A<0
  \end{equation}
  and
  \begin{equation}\label{eq:minDTrem2}
  \lambda^\T e^{A\bar T}J-\lambda^\T<0.
  \end{equation}
  Note, however, that the proof corresponding to the conditions above is completely different than the proof considered for Theorem \ref{th:minDT2}, (c) since it relies on perturbation arguments; see e.g. \cite{Briat:13d,Briat:14f,Briat:15f} for a proof in the quadratic Lyapunov function framework. Affine conditions corresponding to the conditions \eqref{eq:minDTrem1}-\eqref{eq:minDTrem2} can be obtained using the conditions in Theorem \ref{th:cstDT2} and Remark \ref{rem:swapped_constant}.
\end{remark}}

 The above results interestingly connect the problem of finding a common linear copositive Lyapunov functions for a linear positive switched system with two subsystems to the problem of establishing whether a linear positive impulsive system is stable under some minimum dwell-time constraint. Indeed, in the context of Theorem \ref{th:minDT2}, the matrix of the first subsystem is $A$ and that of the second subsystem is $Je^{A\bar T}-I_n$. When the swapped system is considered (see Remark \ref{rem:swapped_minimum}), the matrix of the first subsystem is also $A$ but that of the second subsystem is $e^{A\bar T}J-I_n$. In this regard, characterizing the existence of a linear copositive Lyapunov function can be achieved using existing methods; such as those developed in \cite{Mason:07,Blanchini:15}. However, these conditions are difficult to consider when the matrix $A$ is uncertain or when design is the main goal. The statements (f) and (g) provide alternative equivalent conditions that are affine in $A$ and $J$ and can then be hence considered for uncertain systems and for design purposes. A last interesting point is that the results for arbitrary dwell-time can be retrieved by letting $\bar T\to0$ in the above theorem as also noticed in \cite{Geromel:06b,Briat:15d}.

\subsection{Stability under maximum dwell-time}\label{sec:imp:maximum}

We consider now the maximum dwell-time case. In this setup, the dwell-times are assumed to satisfy a maximum dwell-time condition; i.e. $T_k\le\bar T$, $k\in\mathbb{N}$. \bluee{The following result can be seen as a ``positive systems" counterpart of the results in \citep{Briat:11l,Briat:12h} and provides a stability condition in terms of the discrete-time system \eqref{eq:embedded}:}

\begin{theorem}[Maximum dwell-time]\label{th:maxDT2}
Let $A\in\mathbb{R}^{n\times n}$ and $J\in\mathbb{R}^{n\times n}$ be a Metzler and a nonnegative matrix, respectively. Then, the following statements are equivalent:
\begin{enumerate}[(a)]
  \item The linear form $V(x(t))=\lambda^\T x(t)$, $\lambda\in\mathbb{R}^n_{>0}$ is a Lyapunov function for the system \eqref{eq:mainsyst} in the sense that
      \begin{equation}
        \dot{V}(x(t))\ge\mu^\T x(t),\ t\in(t_k,t_{k+1})
      \end{equation}
      and
      \begin{equation}
        V(x(t^+_{k+1}))-V(x(t^++_k))\le-\nu^\T x(t^+_k)
      \end{equation}
      hold for some $\mu,\nu>0$, all $x(t),x(t_k)\in\mathbb{R}_{\ge0}^n$ and any sequence $\{t_k\}_{k\in\mathbb{N}}\in\mathbb{I}_{\bar{T}}$.
      \item There exists a vector $\lambda\in\mathbb{R}_{>0}^n$ such that the inequalities
  \begin{equation}\label{eq:dkslddksld6868893}
    \lambda^\T A>0
  \end{equation}
  and
  \begin{equation}\label{eq:dkslddksld6868892}
    \lambda^\T(Je^{A\theta}-I_n)<0
  \end{equation}
  hold for all $\theta\le\bar{T}$.
      \item There exists a vector $\lambda\in\mathbb{R}_{>0}^n$ such that the conditions
  \begin{equation}\label{eq:dkslddksld6868890}
    \lambda^\T A>0
  \end{equation}
  and
  \begin{equation}\label{eq:dkslddksld6868891}
  \lambda^\T (Je^{A\bar T}-I_n)<0
  \end{equation}
  hold.
  \item The pair $\left(-A,Je^{A\bar{T}}-I\right)$ admits a common linear copositive Lyapunov function.
  \item We have that
  \begin{equation}
    \ker\begin{bmatrix}
      I & A & -\left(Je^{A\bar{T}}-I\right)
    \end{bmatrix}\cap\mathbb{R}^{3n}_{\ge0}=\{0\}.
  \end{equation}
    \item There exist a matrix function $\zeta:[0,\bar{T}]\mapsto\mathbb{R}^n$, $\zeta(\bar T)\in\mathbb{R}_{>0}^n$, and a scalar $\eps>0$ such that the inequalities
\begin{equation}
      \zeta(\bar T)^\T A>0
    \end{equation}
  \begin{equation}
    \zeta(\tau)^\T A-\dot{\zeta}(\tau)^\T\le0
  \end{equation}
  and
  \begin{equation}
    \zeta(\bar{T})^\T J-\zeta(0)^\T +\eps \mathds{1}^\T \le0
  \end{equation}
 hold for all $\tau\in[0,\bar{T}]$.
     \item There exist a matrix function $\xi:[0,\bar{T}]\mapsto\mathbb{R}^n$, $\xi(0)\in\mathbb{R}_{>0}^n$, and a scalar $\eps>0$ such that the inequalities
\begin{equation}
      \xi(0)^\T A>0
    \end{equation}
  \begin{equation}
    \xi(\tau)^\T A+\dot{\xi}(\tau)^\T\le0
  \end{equation}
  and
  \begin{equation}
    \xi(0)^\T J-\xi(\bar{T})^\T +\eps \mathds{1}^\T \le0
  \end{equation}
 hold for all $\tau\in[0,\bar{T}]$.
\end{enumerate}
 Moreover, when one of the above statements holds, the linear positive impulsive system (\ref{eq:mainsyst}) is asymptotically stable under maximum dwell-time $\bar T$.\mendth
\end{theorem}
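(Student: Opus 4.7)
The plan is to mirror the proof of Theorem \ref{th:minDT2}, with the signs on the flow term flipped (``$\lambda^\T A>0$'' in place of ``$\lambda^\T A<0$'') and the direction of the dwell-time bound reversed (``$\theta\le\bar T$'' in place of ``$\theta\ge\bar T$''), while keeping the same chain of equivalences $(a)\Leftrightarrow(b)\Leftrightarrow(c)\Leftrightarrow(d)\Leftrightarrow(e)\Leftrightarrow(f)\Leftrightarrow(g)$.

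First I would handle $(a)\Leftrightarrow(b)$ by direct substitution of $V(x)=\lambda^\T x$: the flow inequality $\dot V(x(t))\ge\mu^\T x(t)$ on $\mathbb{R}^n_{\ge 0}$ is equivalent to $\lambda^\T A>0$, and the discrete inequality $V(x(t_{k+1}^+))-V(x(t_k^+))\le-\nu^\T x(t_k^+)$ over one cycle of length $T_k\in(0,\bar T]$ is equivalent, via $x(t_{k+1}^+)=Je^{AT_k}x(t_k^+)$, to $\lambda^\T(Je^{A\theta}-I_n)<0$ for all $\theta\in(0,\bar T]$. The direction $(b)\Rightarrow(c)$ is immediate by setting $\theta=\bar T$. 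For $(c)\Leftrightarrow(d)\Leftrightarrow(e)$, I would observe that (c) is literally the existence of a common linear copositive Lyapunov vector for the pair $(-A,Je^{A\bar T}-I_n)$, which is (d), and Gordan's/Farkas' alternative applied to this pair gives the kernel reformulation (e), exactly as in Theorem \ref{th:arbDT}.

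The clock-dependent equivalences $(c)\Leftrightarrow(f)\Leftrightarrow(g)$ I would handle as in Theorem \ref{th:cstDT2}. For $(f)\Rightarrow(c)$, $\frac{d}{d\tau}[\zeta(\tau)^\T e^{A(\bar T-\tau)}]=[\dot\zeta(\tau)^\T-\zeta(\tau)^\T A]\,e^{A(\bar T-\tau)}\ge 0$ by the flow inequality of (f) and the nonnegativity of $e^{A(\bar T-\tau)}$ (Metzler $A$); integrating over $[0,\bar T]$ yields $\zeta(0)^\T e^{A\bar T}\le\zeta(\bar T)^\T$, which combined with the jump inequality gives $\zeta(\bar T)^\T(Je^{A\bar T}-I_n)<0$, i.e.\ (c) with $\lambda=\zeta(\bar T)>0$; the extra condition $\zeta(\bar T)^\T A>0$ transfers directly. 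For the converse, I would take $\zeta^*(\tau)=e^{A^\T(\tau-\bar T)}\lambda$, which satisfies $\zeta^*(\tau)^\T A-\dot\zeta^*(\tau)^\T\equiv 0$, $\zeta^*(\bar T)=\lambda$, and whose jump inequality reduces, after right-multiplication by $e^{A\bar T}\ge 0$, to $\lambda^\T Je^{A\bar T}<\lambda^\T$. Finally, $(f)\Leftrightarrow(g)$ is the time-reversal substitution $\xi(\tau)=\zeta(\bar T-\tau)$.

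The main obstacle is $(c)\Rightarrow(b)$: in Theorem \ref{th:minDT2} one transports the inequality $\lambda^\T Je^{A\bar T}-\lambda^\T<0$ to larger times by right-multiplication with the nonnegative matrix $e^{As}$, but here one must transport it to smaller times $\theta=\bar T-s$, and $e^{-As}$ is generally not nonnegative. The strategy I would try is to exploit the factorization $e^{A\bar T}=e^{A\theta}e^{A(\bar T-\theta)}$ together with the monotonicity $\lambda^\T e^{A(\bar T-\theta)}\ge\lambda^\T$ coming from $\lambda^\T A>0$, obtaining $(\lambda^\T Je^{A\theta}-\lambda^\T)\,e^{A(\bar T-\theta)}<0$, and then to extract componentwise negativity of $\lambda^\T Je^{A\theta}-\lambda^\T$ itself by arguing through the column structure of $e^{A(\bar T-\theta)}$; this inversion step is the crux of the proof. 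The asymptotic stability conclusion in the ``Moreover'' statement then follows from (a) by a standard argument: $V(x(t_k^+))$ decreases by at least $\nu^\T x(t_k^+)$ at every jump, forcing $x(t_k^+)\to 0$, and the flow is uniformly bounded on intervals of length at most $\bar T$, hence $x(t)\to 0$.
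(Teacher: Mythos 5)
Most of your chain does go through, and in essentially the way the paper intends: (a)$\Leftrightarrow$(b) by direct substitution, (b)$\Rightarrow$(c) by specializing $\theta=\bar T$, (c)$\Leftrightarrow$(d)$\Leftrightarrow$(e) by definition and the theorem of the alternative, (f)$\Rightarrow$(c) by integrating $\tfrac{d}{d\tau}\bigl[\zeta(\tau)^\T e^{A(\bar T-\tau)}\bigr]\ge0$, and (f)$\Leftrightarrow$(g) by time reversal. This mirrors the corresponding steps of Theorems \ref{th:cstDT2} and \ref{th:minDT2}, which is all the paper itself offers here (its proof is a one-line pointer). Two caveats: your (c)$\Rightarrow$(f) construction $\zeta^*(\tau)=e^{A^\T(\tau-\bar T)}\lambda$ requires right-multiplying the jump inequality by $e^{-A\bar T}$, which is \emph{not} nonnegative -- the same weakness is present in the paper's own proof of Theorem \ref{th:cstDT2}, (b)$\Rightarrow$(e), and is repaired by instead taking $\zeta(\tau)=e^{A^\T\tau}\nu$ with $\nu^\T(e^{A\bar T}J-I_n)<0$ -- and, more seriously, the step you yourself flag as the crux is where the proof breaks.

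The gap at (c)$\Rightarrow$(b) cannot be closed by the strategy you sketch. You correctly arrive at $\bigl(\lambda^\T Je^{A\theta}-\lambda^\T\bigr)e^{A(\bar T-\theta)}<0$, but a row vector $v^\T$ with $v^\T M<0$ for a nonnegative nonsingular $M$ need not be negative itself (take $v^\T=(1,-10)$ and $M=\left[\begin{smallmatrix}1&0\\1&1\end{smallmatrix}\right]$), so no ``column structure'' argument will extract componentwise negativity. In fact the implication is false as stated: take
\begin{equation*}
A=\begin{bmatrix}-1&0\\10&1\end{bmatrix},\qquad J=\begin{bmatrix}2&0\\0&0\end{bmatrix},\qquad \bar T=1,\qquad \lambda=\begin{bmatrix}1\\1\end{bmatrix}.
\end{equation*}
Then $A$ is Metzler, $J\ge0$, $\lambda^\T A=(9,1)>0$ and $\lambda^\T(Je^{A}-I_2)=(2e^{-1}-1,\,-1)<0$, so statement (c) holds; yet $Je^{A\theta}=\diag(2e^{-\theta},0)$ has spectral radius $2e^{-\theta}>1$ for every $\theta<\log 2$, so \eqref{eq:dkslddksld6868892} fails for small $\theta$ for \emph{every} $\lambda>0$, statement (b) fails, and the system is not asymptotically stable under maximum dwell-time $\bar T$ (the constant sequence $T_k=1/2$ diverges). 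The asymmetry with Theorem \ref{th:minDT2} is structural: there the inequality is transported to larger times by right-multiplication with the nonnegative matrix $e^{As}$, whereas the backward transport needed here would require $e^{-As}\ge0$, which fails for Metzler $A$. So either an additional hypothesis is needed (e.g.\ $A\ge0$ entrywise, under which $\theta\mapsto Je^{A\theta}$ is entrywise nondecreasing and (c)$\Rightarrow$(b) is immediate), or (c), (d), (e) must be dropped from the equivalence; as proposed, the proof cannot be completed.
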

\begin{proof}
  The proof follows from the same lines as the proof of Theorem \ref{th:minDT2}.
\end{proof}

\blue{\begin{remark}[Swapped system]\label{rem:swapped_maximum}
As in the constant and minimum dwell-time case, a stability condition involving the swapped system \eqref{eq:embedded_swapped} can be formulated. In this case, the conditions of Theorem \ref{th:maxDT2}, (c)  become
\begin{equation}\label{eq:maxDTrem1}
    \lambda^\T A>0
  \end{equation}
  and
  \begin{equation}\label{eq:maxDTrem2}
  \lambda^\T e^{A\bar T}J-\lambda^\T<0.
  \end{equation}
  As for the minimum dwell-time case, the proof corresponding to the conditions above is completely different than the proof considered for Theorem \ref{th:minDT2}, (c) since it relies on perturbation arguments; see e.g. \cite{Briat:13d,Briat:14f,Briat:15f} for a proof in the quadratic Lyapunov function framework. Affine conditions corresponding to the conditions \eqref{eq:maxDTrem1}-\eqref{eq:maxDTrem2} can be obtained using the conditions in Theorem \ref{th:cstDT2} and Remark \ref{rem:swapped_constant}.
\end{remark}}

\bluee{\begin{remark}\label{rem:antistable}
  The requirement that $\mu$ be positive in the statement (a) of Theorem \ref{th:maxDT2} (or equivalently that \eqref{eq:dkslddksld6868893} holds for some $\lambda\in\mathbb{R}^d_{>0}$ in the other statements) is equivalent to saying that the matrix $-A$ is Hurwitz stable, which excludes unstable matrices with stable eigenvalues. The motivation for its consideration is that it allows for the simplification of the stability conditions by turning the semi-infinite dimensional feasibility problem \eqref{eq:dkslddksld6868892} into the finite-dimensional problem \eqref{eq:dkslddksld6868891}. Indeed, while it is clear that if the condition \eqref{eq:dkslddksld6868892} holds for all $\theta\le\bar T$, then \eqref{eq:dkslddksld6868891} holds as well, the converse is not true in general. Adding the condition \eqref{eq:dkslddksld6868890} precisely allows to recover that reverse implication. Finally, it is important to stress that if $-A$ is not Hurwitz stable, then a maximum dwell-time condition can still be obtained using the range dwell-time results discussed in the next section.
\end{remark}}

\subsection{Stability under range dwell-time}\label{sec:range}\label{sec:imp:range}

We address now the range dwell-time case, that is the case when $T_k\in[T_{min},T_{max}]$, $k\in\mathbb{N}$, for some prescribed bounds $0<T_{min}\le T_{max}<\infty$. \bluee{The following result can be seen as a ``positive systems" counterpart of the results in \citep{Briat:11l,Briat:12h,Briat:13b} and provides a stability condition in terms of the discrete-time system \eqref{eq:embedded_swapped}:}
\begin{theorem}[Stability under range dwell-time]\label{th:rangeDT}
 Let $A\in\mathbb{R}^{n\times n}$ and $J\in\mathbb{R}^{n\times n}$ be a Metzler and a nonnegative matrix, respectively. Then, the following statements are equivalent:
 \begin{enumerate}[(a)]
   \item\label{item:th3:1} The copositive linear form $V(x(t))=\lambda^\T x(t)$, $\lambda\in\mathbb{R}^n_{>0}$ is a discrete-time Lyapunov function for the impulsive system \eqref{eq:mainsyst} in the sense that the inequality
        \begin{equation}\label{eq:kdlsjdlsglab901}
          V(x(t_{k+1}))-V(x(t_{k}))\le -\mu^\T x(t_k)
        \end{equation}
     holds for some $\mu>0$, all $x(t_k)\in\mathbb{R}^n_{\ge0}$, $T_k\in[T_{min},T_{max}]$ and all $k\in\mathbb{N}$.
   \item\label{item:th3:2} There exists a vector $\lambda\in\mathbb{R}_{>0}^n$ such that the inequality
    \begin{equation}\label{eq:stabmono1}
  \lambda^\T\left[e^{A\theta}J-I_n\right]<0
    \end{equation}
holds for all $\theta\in[T_{min},T_{max}]$.
   \item\label{item:th3:4} There exist a differentiable vector-valued function $\zeta:[0,\bar{T}]\mapsto\mathbb{R}^n$, $\zeta(0)\in\mathbb{R}_{>0}^n$, and a scalar $\eps>0$ such that the inequalities
  \begin{equation}
    \zeta(\tau)^\T A -\dot{\zeta}(\tau)^\T\le0
  \end{equation}
  and
  \begin{equation}
  \zeta(\theta)^\T J-\zeta(0)^\T+\eps \mathds{1}_n^\T\le0
  \end{equation}
  hold for all $\tau\in[0,T_{max}]$ and $\theta\in[T_{min},T_{max}]$.
   \end{enumerate}
  \end{theorem}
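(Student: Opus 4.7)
The plan is to prove (a) $\Leftrightarrow$ (b) directly from the closed-form solution of \eqref{eq:mainsyst} over one dwell-period, and then to handle (b) $\Leftrightarrow$ (c) by a clock-dependent lifting of the scalar inequality \eqref{eq:stabmono1}. Using the left-continuous convention, the trajectory satisfies $x(t_{k+1}) = e^{AT_k}Jx(t_k)$, so the discrete increment $V(x(t_{k+1}))-V(x(t_k))$ equals $\lambda^\T[e^{AT_k}J-I_n]x(t_k)$. Since $x(t_k)$ ranges over all of $\mathbb{R}^n_{\ge 0}$, and $T_k$ over the compact interval $[T_{min},T_{max}]$, the existence of some $\mu>0$ making \eqref{eq:kdlsjdlsglab901} hold uniformly is plainly equivalent to \eqref{eq:stabmono1} holding pointwise on $[T_{min},T_{max}]$.

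For (c) $\Rightarrow$ (b), I would fix $\theta\in[T_{min},T_{max}]$ and consider the auxiliary row-vector function $\phi(\tau) = \zeta(\tau)^\T e^{A(\theta-\tau)}$ on $\tau\in[0,\theta]$. A direct differentiation gives $\dot\phi(\tau) = [\dot\zeta(\tau)^\T - \zeta(\tau)^\T A]\,e^{A(\theta-\tau)}$; the bracket is entrywise nonpositive (with flipped sign) by the first inequality in (c), so after adjusting signs $\phi$ is nondecreasing: the key point is that $e^{A(\theta-\tau)}$ is entrywise nonnegative because $A$ is Metzler and $\theta-\tau\ge 0$. Integrating from $0$ to $\theta$ yields $\zeta(0)^\T e^{A\theta}\le \zeta(\theta)^\T$. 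Right-multiplying by the nonnegative matrix $J$ preserves the inequality, and combining the result with the jump condition $\zeta(\theta)^\T J\le \zeta(0)^\T-\eps\mathds{1}_n^\T$ produces $\zeta(0)^\T[e^{A\theta}J-I_n]\le -\eps\mathds{1}_n^\T<0$, which is \eqref{eq:stabmono1} with $\lambda=\zeta(0)\in\mathbb{R}^n_{>0}$.

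The converse (b) $\Rightarrow$ (c) is constructive along the lines of the analogous step in Theorem \ref{th:cstDT2}: take $\zeta^*(\tau)=e^{A^\T\tau}\lambda$, so $\dot\zeta^*(\tau)^\T=\zeta^*(\tau)^\T A$ (the flow inequality holds with equality on all of $[0,T_{max}]$) and $\zeta^*(0)=\lambda\in\mathbb{R}^n_{>0}$. Moreover, $\zeta^*(\theta)^\T J-\zeta^*(0)^\T = \lambda^\T[e^{A\theta}J-I_n]$ is strictly negative for each $\theta$ in the compact interval $[T_{min},T_{max}]$, so continuity together with compactness yields a uniform $\eps>0$ making the jump inequality hold throughout the range.

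The main subtlety, and the step I would check most carefully, is that the manipulations in (c) $\Rightarrow$ (b) crucially exploit the positivity of the system: the monotonicity argument for $\phi$ requires $e^{A(\theta-\tau)}\ge 0$, which demands both that $A$ be Metzler and that $\theta-\tau\ge 0$; and the reinjection of $J$ requires $J\ge 0$ to preserve the direction of a row-vector inequality under right multiplication. This also explains why the clock domain in (c) must extend up to $T_{max}$---we need room to integrate from $0$ to any admissible $\theta$. Everything else reduces to arguments already deployed for the constant and minimum dwell-time cases, so the essential novelty is confined to this single monotonicity-plus-positivity step.
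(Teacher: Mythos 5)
Your proposal is correct and follows essentially the same route as the paper, which disposes of (a)$\Leftrightarrow$(b) by the one-step transition $x(t_{k+1})=e^{AT_k}Jx(t_k)$ and refers the (b)$\Leftrightarrow$(c) equivalence back to the arguments of Theorem \ref{th:cstDT2}. Your write-up merely makes explicit what the paper leaves implicit -- the integrating-factor/monotonicity step using $e^{A(\theta-\tau)}\ge0$ and the construction $\zeta^*(\tau)=e^{A^\T\tau}\lambda$ -- correctly adapted to the swapped map $e^{A\theta}J$, the anchor point $\zeta(0)=\lambda$, and the compact range $[T_{min},T_{max}]$.
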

  \begin{proof}
It is readily seen that the Lyapunov condition \eqref{eq:kdlsjdlsglab901} is equivalent to the inequality \eqref{eq:stabmono1}. The proof of the equivalence between the statements (b) and (c) follows from exactly the same arguments as in the proof of  Theorem \ref{th:cstDT2}. It is thus omitted.
  \end{proof}

\bluee{\begin{remark}[Non-swapped system]\label{rem:swapped_range}
Analogous conditions can be obtained by using instead the original discretized system \eqref{eq:embedded} instead of the swapped one \eqref{eq:embedded_swapped}. In this case, the condition \eqref{eq:stabmono1} of statement (b) is naturally substituted by
\begin{equation}
  \lambda^\T\left[Je^{A\theta}-I_n\right]<0
\end{equation}
while the conditions of statement (c) have to be replaced by the existence of  a vector-valued function $\xi:[0,\bar{T}]\mapsto\mathbb{R}^n$, $\xi(0)\in\mathbb{R}_{>0}^n$, and a scalar $\eps>0$ such that the inequalities
  \begin{equation}
    \xi(\tau)^\T A+\dot{\xi}(\tau)^\T\le0
  \end{equation}
  and
  \begin{equation}
    \xi(0)^\T J-\xi(\theta)^\T +\eps \mathds{1}^\T \le0
  \end{equation}
 hold for all $\tau\in[0,\bar{T}]$ and all $\theta\in[T_{min},T_{max}]$.
\end{remark}}

\bluee{\begin{remark}\label{rem:antistable2}
  As stated in Remark \ref{rem:antistable}, when the matrix $-A$ is not Hurwitz stable, a maximum dwell-time condition can be obtained from Theorem \ref{th:rangeDT} by setting $T_{min}=0$ (or a very small value such as $10^{-5}$) and $T_{max}=\bar T$.
\end{remark}}

\subsection{Computational considerations}\label{sec:imp:computation}

It seems important to carefully address the problem of verifying the infinite-dimensional stability conditions of the previous results. We detail here three methods for accurately and efficiently verifying them. Only the relaxation of the conditions in Theorem \ref{th:cstDT2}, \eqref{item:th1:4} are considered here for simplicity.  However, relaxed finite-dimensional results for the other conditions can be easily obtained using the same procedure.

\subsubsection{Piecewise linear approach}\label{sec:discretization}

This approach is based on the piecewise linear approximation of the function $\zeta$ or $\xi$ in the results. This method has been initially proposed in \citep{Allerhand:11} in order to relax infinite-dimensional LMI conditions arising in the analysis of switched systems into finite-dimensional LMI conditions. Below is the approximation of the conditions in Theorem \ref{th:cstDT2}, statement \eqref{item:th1:4}:
\begin{proposition}\label{prop:discretization}
The following statements are equivalent:
\begin{enumerate}[(a)]
  \item For some $d_g\in\mathbb{N}$, the function
  \begin{equation}
    \zeta\left(\dfrac{i\bar{T}}{d_g}+\tau\right)=\dfrac{\zeta_{i+1}-\zeta_i}{\bar{T}/d_g}\tau+\zeta_i,\ \tau\in\left[0,\dfrac{\bar{T}}{d_g}\right],\ i=0,\ldots,d_g-1
  \end{equation}
  verifies the conditions of Theorem \ref{th:cstDT2}, statement \eqref{item:th1:4}.
  \item There exist vectors $\zeta_i\in\mathbb{R}^n_i$, $\zeta_{d_g}\in\mathbb{R}_{>0}^n$, such that the following conditions
  \begin{equation}
    -\dfrac{\zeta_{i+1}-\zeta_i}{\bar{T}/d_g}+\zeta_i^\T A\le0,
  \end{equation}
    \begin{equation}
    -\dfrac{\zeta_{i+1}-\zeta_i}{\bar{T}/d_g}+\zeta_{i+1}^\T A\le0,
  \end{equation}
  and
    \begin{equation}
    \zeta_{d_g}^\T J-\zeta_0^\T+\eps\mathds{1}_n^\T\le0
  \end{equation}
  hold for all $i=0,\ldots,d_g-1$.
\end{enumerate}
\end{proposition}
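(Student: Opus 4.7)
The plan is to exploit the fact that the ansatz in (a) is continuous on $[0,\bar T]$ and piecewise affine, so that on each open subinterval $\bigl(i\bar T/d_g,(i+1)\bar T/d_g\bigr)$ the derivative is the constant vector $(\zeta_{i+1}-\zeta_i)/(\bar T/d_g)$. The first step is therefore to substitute the piecewise linear ansatz into the flow inequality $\zeta(\tau)^\T A - \dot{\zeta}(\tau)^\T \le 0$ of Theorem \ref{th:cstDT2}, statement \eqref{item:th1:4}, and to rewrite it separately on each subinterval using the local parameterisation $\tau \in [0,\bar T/d_g]$.

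The key observation is then that on each subinterval the left-hand side of the flow inequality is an \emph{affine} function of $\tau$, since $\zeta(\tau)$ is affine there and $\dot{\zeta}(\tau)$ is constant. An affine function on a compact interval is nonpositive on the entire interval if and only if it is nonpositive at both endpoints, so the continuous flow inequality reduces exactly to the two endpoint inequalities of statement (b), obtained by evaluating at $\tau=0$ (giving the term $\zeta_i^\T A$) and at $\tau=\bar T/d_g$ (giving $\zeta_{i+1}^\T A$), while $\dot{\zeta}$ contributes the same term $(\zeta_{i+1}-\zeta_i)/(\bar T/d_g)$ in both cases. Ranging over $i=0,\ldots,d_g-1$ covers $[0,\bar T]$ entirely. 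The jump condition and the positivity constraint transfer literally: $\zeta(0)=\zeta_0$ and $\zeta(\bar T)=\zeta_{d_g}$, so the condition $\zeta(\bar T)^\T J-\zeta(0)^\T+\eps\mathds{1}_n^\T\le0$ becomes the last inequality in (b) and the requirement $\zeta(\bar T)\in\mathbb{R}^n_{>0}$ becomes $\zeta_{d_g}\in\mathbb{R}^n_{>0}$.

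The only subtlety, and what I would expect to be the main obstacle, is the non-differentiability of $\zeta$ at the interior knots $\tau = i\bar T/d_g$, $i=1,\ldots,d_g-1$, which formally conflicts with the differentiability assumption in statement \eqref{item:th1:4} of Theorem \ref{th:cstDT2}. I would resolve this either by reading that statement in the almost-everywhere sense (legitimate because its proof only uses the integral of $\dot{\zeta}$, which is insensitive to the measure-zero set of kinks), or, equivalently, by mollifying the piecewise linear $\zeta$ into a smooth $\zeta_\delta$ that agrees with $\zeta$ outside $\delta$-neighbourhoods of the knots, verifying that both the flow and jump inequalities are preserved up to an arbitrarily small perturbation, and then letting $\delta\downarrow 0$. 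Once this technicality is dispatched, the equivalence between (a) and (b) follows directly from the affine dependence on $\tau$ inside each subinterval.
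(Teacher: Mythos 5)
Your argument is correct and is essentially the paper's own proof, which is stated there in one line as following ``from direct substitutions'': the substance is exactly your observation that the flow inequality is affine in $\tau$ on each subinterval and hence equivalent to its two endpoint evaluations, with the jump and positivity conditions transferring literally. Your additional care about the non-differentiability at the knots (reading the flow condition almost everywhere or mollifying) addresses a technicality the paper leaves implicit, and your resolution of it is sound.
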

\begin{proof}
  The proof follows from direct substitutions.
\end{proof}

\blue{\begin{remark}[Asymptotic exactness]
  The above relaxation yields a finite-dimensional linear program that can be efficiently solved. Moreover, this relaxation can be shown to be asymptotically exact in the sense that if the original conditions of Theorem \ref{th:cstDT2}, statement \eqref{item:th1:4}, hold then we can find a $d_g\in\mathbb{N}$ for which the above conditions are feasible. This can be proved using the same arguments as in \cite{Xiang:15a}.
\end{remark}}

\subsubsection{Sum of squares approach}\label{sec:SOS}

This approach is based on Putinar's Positivstellensatz \citep{Putinar:93} and is solved using semidefinite programming methods, as initially proposed in \citep{Parrilo:00}. Before stating the main result, we need first to define some terminology. A multivariate polynomial $p(x)$ is said to be a sum-of-squares (SOS) polynomial if it can be written as $\textstyle p(x)=\sum_{i}q_i(x)^2$ for some polynomials $q_i(x)$. A polynomial vector $p(x)\in\mathbb{R}^n$ is said to \emph{componentwise sum-of-squares} (CSOS) if each of its components is an SOS polynomial. Checking whether a polynomial is SOS can be exactly cast as a semidefinite program \citep{Parrilo:00,Chesi:10b} that can be easily solved using semidefinite programming solvers such as SeDuMi \citep{Sturm:01a} or SDPT3 \cite{Tutuncu:03}. The package SOSTOOLS \citep{sostools3} can be used to formulate the SOS program in a convenient way. This approach has been used for instance for the analysis and control of delay systems \cite{Papa:07,Peet:09,Peet:13}, hybrid systems \cite{Prajna:03}, sampled-data systems \cite{Seuret:13} impulsive systems \citep{Briat:12h,Briat:13d}, switched systems \citep{Briat:13b,Briat:14f}, pseudo-periodic systems with impulses \citep{Briat:15f} and stochastic impulsive systems \citep{Briat:15i}.

Below is the approximation of the conditions in Theorem \ref{th:cstDT2}, statement \eqref{item:th1:4}:
\begin{proposition}\label{prop:SOS}
  Let $d_s\in\mathbb{N}$, $\eps>0$ and $\epsilon>0$ be given and assume that there exists polynomial vectors $\zeta:\mathbb{R}\mapsto\mathbb{R}^n$ and $\gamma:\mathbb{R}\mapsto\mathbb{R}^n$ of degree $2d_s$ such that
  \begin{enumerate}[(i)]
    \item $\gamma(\tau)$ is CSOS,
    \item $\zeta(\bar T)-\epsilon\mathds{1}_n\ge0$ (or is CSOS),
    \item $-\zeta(\tau)^\T A+\dot{\zeta}(\tau)-\tau(\bar{T}-\tau)\gamma(\tau)^\T$ is CSOS and
    \item $-\zeta(\bar T)^\T J+\zeta(0)^\T-\eps\mathds{1}_n^\T\ge0$ (or is CSOS).
  \end{enumerate}
  Then, the conditions of Theorem \ref{th:cstDT2}, statement \eqref{item:th1:4} hold with the same $\zeta(\tau)$ and the system \eqref{eq:mainsyst} is asymptotically stable under constant dwell-time $\bar T$.
\end{proposition}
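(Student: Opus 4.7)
The plan is to verify, hypothesis by hypothesis, that the SOS conditions (i)--(iv) imply the premises of Theorem \ref{th:cstDT2}, statement \eqref{item:th1:4}, by exploiting two elementary facts: an SOS polynomial is pointwise nonnegative on all of $\mathbb{R}$, and on the interval $[0,\bar T]$ the polynomial multiplier $\tau(\bar T-\tau)$ is nonnegative. Since $\zeta$ is a polynomial it is automatically differentiable, so once we establish the sign conditions on $[0,\bar T]$ and the required positivity of $\zeta(\bar T)$, Theorem \ref{th:cstDT2} delivers the asymptotic stability conclusion.

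First I would handle condition (ii): componentwise nonnegativity of $\zeta(\bar T)-\epsilon\mathds{1}_n$ gives $\zeta(\bar T)\ge\epsilon\mathds{1}_n>0$, so $\zeta(\bar T)\in\mathbb{R}^n_{>0}$, which matches the positivity requirement on the boundary value of the candidate Lyapunov vector. Next, condition (iv) is just a rewriting of the jump inequality \eqref{eq:c2b2}: componentwise nonnegativity of $-\zeta(\bar T)^\T J+\zeta(0)^\T-\eps\mathds{1}_n^\T$ is exactly $\zeta(\bar T)^\T J-\zeta(0)^\T+\eps\mathds{1}_n^\T\le0$.

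The central step is the flow inequality \eqref{eq:c1b2}. Since $\gamma(\tau)$ is CSOS, each of its entries is nonnegative on $\mathbb{R}$, and in particular on $[0,\bar T]$; combined with $\tau(\bar T-\tau)\ge0$ on $[0,\bar T]$, this gives $\tau(\bar T-\tau)\gamma(\tau)^\T\ge0$ componentwise on $[0,\bar T]$. On the other hand, the CSOS property in (iii) implies that $-\zeta(\tau)^\T A+\dot\zeta(\tau)^\T-\tau(\bar T-\tau)\gamma(\tau)^\T\ge0$ componentwise on all of $\mathbb{R}$. Adding the two inequalities on $[0,\bar T]$ yields $-\zeta(\tau)^\T A+\dot\zeta(\tau)^\T\ge0$ there, equivalently $\zeta(\tau)^\T A-\dot\zeta(\tau)^\T\le0$ on $[0,\bar T]$, which is \eqref{eq:c1b2}. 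The role of the multiplier $\tau(\bar T-\tau)$ here is exactly that of a Putinar-type certificate for nonnegativity on the compact semialgebraic set $[0,\bar T]$, which is why the relaxation is expected to be asymptotically exact as $d_s\to\infty$ (though only the forward implication is needed for the present statement).

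There is no genuine obstacle: the proof is a routine ``SOS implies nonnegativity, then apply Theorem \ref{th:cstDT2}'' argument, with the only subtlety being to remember that each of the four SOS/nonnegativity conditions must be interpreted componentwise (since we work with vector-valued polynomials $\zeta$ and $\gamma$) and that the multiplier $\gamma$ only needs to enforce nonnegativity on $[0,\bar T]$, not globally. Once conditions (i)--(iv) have been translated into the three pointwise/boundary inequalities of statement \eqref{item:th1:4}, the conclusion that \eqref{eq:mainsyst} is asymptotically stable under the constant dwell-time $\bar T$ follows immediately from Theorem \ref{th:cstDT2}.
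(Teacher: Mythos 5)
Your proof is correct and follows essentially the same route as the paper's: condition (ii) gives $\zeta(\bar T)>0$, condition (iv) is a direct restatement of \eqref{eq:c2b2}, and the CSOS certificate in (iii) combined with $\gamma(\tau)\ge0$ and $\tau(\bar T-\tau)\ge0$ on $[0,\bar T]$ yields \eqref{eq:c1b2}, after which Theorem \ref{th:cstDT2} concludes. No gaps.
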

\begin{proof}
  Clearly, the condition in (ii) is equivalent to saying that $\zeta(\bar T)>0$ and that in (iv) is equivalent to the condition \eqref{eq:c2b2}. We then consider the condition in (iii), which implies that $\zeta(\tau)^\T A-\dot{\zeta}(\tau)\le-\tau(\bar{T}-\tau)\gamma(\tau)^\T$ for all $\tau\in\mathbb{R}$. Since $\gamma(\tau)$ is CSOS, then we have that $\gamma(\tau)\ge0$ for all $\tau\in\mathbb{R}$ and, therefore, this implies that $\zeta(\tau)^\T A-\dot{\zeta}(\tau)\le0$ for all $\tau\in[0,\bar T]$ which is exactly the condition \eqref{eq:c1b2}. The proof is complete.
\end{proof}

\blue{\begin{remark}[Asymptotic exactness]
As for the piecewise linear approximation, it can be shown that the above relaxation is asymptotically exact in the sense that if the original conditions of Theorem \ref{th:cstDT2}, statement \eqref{item:th1:4}, hold then we can find a $d_s\in\mathbb{N}$ for which the above conditions are feasible. This was proved in the context of the relaxation of the LMI conditions characterizing the stability of switched systems in \cite{Briat:14f}. The same arguments apply here.
\end{remark}}

\subsubsection{Handelman-based approach}\label{sec:handelman}

This approach is based on Handelman's Theorem \citep{Handelman:88} that states that if a polynomial is positive on a compact polytope of the form $\{x:\mathbb{R}^n:f_i(x)\ge0,f_i\ \textnormal{affine},i=1,\ldots,N\}$, then it can be expressed as a nonnegative linear combinations of products of the functions $f_1,\ldots,f_N$. This elegant result has been considered in \citep{Briat:11h} for deriving finite-dimensional linear programs establishing the stability of linear uncertain positive systems or in \citep{Kamyar:14,Kamyar:15} for the construction of Lyapunov functions and the verification of polynomials optimization problems.

Below is the approximation of the conditions in Theorem \ref{th:cstDT2}, statement \eqref{item:th1:4}:
\begin{proposition}\label{prop:handelman}
  Let $d_h\in\mathbb{N}$, $\eps>0$ and $\epsilon>0$ be given and assume that there exists a vector-valued polynomial $\zeta:\mathbb{R}\mapsto\mathbb{R}^n$ of degree $d_h$ and parameters $\theta_{ij}\in\mathbb{R}^n_{\ge0}$, $0\le i+j\le d$ such that
  \begin{enumerate}[(i)]
    \item $\theta_{ij}\ge0$ for all $0\le i+j\le d_h$,
    \item $\zeta(0)-\epsilon\mathds{1}_n\ge0$,
    \item the polynomial vector $\displaystyle -\zeta(\tau)^\T A+\dot{\zeta}(\tau)-\sum_{0\le i+j\le d_h}\theta_{ij}\tau^i(\bar{T}-\tau)^j$ has nonnegative coefficients and
    \item  $-\zeta(\bar T)^\T J+\zeta(0)^\T-\eps\mathds{1}_n^\T\ge0$.
  \end{enumerate}
  Then, the conditions of Theorem \ref{th:cstDT2}, statement \eqref{item:th1:4} hold with the same $\zeta(\tau)$ and the system \eqref{eq:mainsyst} is asymptotically stable under constant dwell-time $\bar T$.
\end{proposition}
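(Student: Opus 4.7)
The plan is to verify the three conditions of Theorem \ref{th:cstDT2}, statement \eqref{item:th1:4}, by showing that each one follows from the corresponding Handelman hypothesis, in direct analogy with the sum-of-squares proof of Proposition \ref{prop:SOS}. The only substantive difference is that Putinar's Positivstellensatz is replaced by the elementary (``easy'') direction of Handelman's theorem: a real polynomial with nonnegative coefficients is nonnegative on $[0,\infty)$. All vector polynomial identities are interpreted componentwise, so that this observation is applied coordinate by coordinate.

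The two boundary conditions are immediate. Item (iv) is a direct rearrangement of the jump inequality \eqref{eq:c2b2} with the strict slack $\eps\mathds{1}_n^\T$, and item (ii) gives the strict positivity of $\zeta$ at the relevant endpoint with margin $\epsilon\mathds{1}_n$, certifying $\zeta$ as a valid weight vector for the linear copositive Lyapunov function of statement \eqref{item:th1:4} (reading the endpoint in the same way as in Proposition \ref{prop:SOS}).

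The core step is to deduce the flow inequality \eqref{eq:c1b2} from item (iii). By the trivial direction of Handelman's theorem applied componentwise, (iii) yields
\begin{equation*}
-\zeta(\tau)^\T A+\dot{\zeta}(\tau)^\T \;\ge\; \sum_{0\le i+j\le d_h} \theta_{ij}^\T\,\tau^i(\bar T-\tau)^j \qquad \text{for all } \tau\ge 0.
\end{equation*}
For $\tau\in[0,\bar T]$ both $\tau$ and $\bar T-\tau$ are nonnegative, and by (i) the weights $\theta_{ij}$ are nonnegative componentwise; hence the right-hand side is componentwise nonnegative on $[0,\bar T]$. This yields $\zeta(\tau)^\T A-\dot{\zeta}(\tau)^\T\le 0$ on $[0,\bar T]$, which is exactly \eqref{eq:c1b2}. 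Putting the three pieces together, every hypothesis of Theorem \ref{th:cstDT2}, statement \eqref{item:th1:4}, is met, and asymptotic stability under constant dwell-time $\bar T$ follows from that theorem.

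I do not anticipate any real obstacle: the proof is a direct substitution combined with the sign-positivity observation at the heart of Handelman's representation, used only in its easy direction (the deep converse would only be needed if one also wanted to state asymptotic exactness of the relaxation, which is not claimed here). The only care required is to track the vector dimensions and to read the polynomial identity in (iii) componentwise, which is routine.
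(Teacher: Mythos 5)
Your proof is correct and follows essentially the same route as the paper's: both arguments reduce condition (iii) to the elementary observation that a polynomial with nonnegative coefficients is nonnegative for $\tau\ge0$, then use $\theta_{ij}\ge0$ together with the nonnegativity of $\tau^i(\bar T-\tau)^j$ on $[0,\bar T]$ to recover the flow inequality \eqref{eq:c1b2}, while (ii) and (iv) give the endpoint conditions directly. Your remark that the endpoint in (ii) should be read as in Proposition \ref{prop:SOS} (i.e.\ positivity of $\zeta(\bar T)$, as required by Theorem \ref{th:cstDT2}, statement \eqref{item:th1:4}) is a sensible reading of what appears to be a typo in the statement, and is consistent with the paper's intent.
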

\begin{proof}
  The proof follows from the same line as for Proposition \ref{prop:SOS} with the difference that we consider here Handelman's theorem in place of Putinar's Positivstellensatz. Following similar arguments as in the proof of Proposition \ref{prop:SOS}, if the conditions in statement (i) and (iii) are satisfied then we have that $\zeta(\tau)^\T A-\dot{\zeta}(\tau)\le0$ for all $\tau\in[0,\bar T]$.  The fact that the condition is stated in terms of the nonnegativity of the coefficients (as opposed to the nonnegativity of the eigenvalues of the Gram matrix for the sum of squares relaxation) comes from the fact that Handelman's theorem deals with an equality between the supposedly positive polynomial and a nonnegative sum of products of the basis functions of the polytope. However, we are not interested here in finding an equality but in finding a positive lower bound for the polynomial that can be expressed as  a nonnegative sum of products of basis functions. \bluee{The polynomial in statement (iii) can be rewritten in the form $\textstyle\left(\col_{i=0}^{d_h}\{\tau^i\}\right)^\T \varpi(\theta)$ where $\varpi(\theta)\in\mathbb{R}^{(d_h+1)\times n}$ is affine in the scalars $\theta_{ij}\ge0$ and contains all the coefficients of the polynomials in (iii). Using now the fact that $\tau\ge0$, then if $\varpi(\theta)\ge0$ (i.e. the polynomials have nonnegative coefficients) we have that  $\textstyle\varpi(\theta)^\T\left(\col_{i=0}^{d_h+2}\{\tau^i\}\right)\ge0$ for all $\tau\ge0$. This then implies that  $\textstyle-\zeta(\tau)^\T A+\dot{\zeta}(\tau)\ge\sum_{0\le i+j\le d_h}\theta_{ij}\tau^i(\bar{T}-\tau)^j$ holds for all $\tau\in\mathbb{R}_{\ge0}$ which implies, in turn, that $\textstyle -\zeta(\tau)^\T A+\dot{\zeta}(\tau)\ge0$ for all $\tau\in[0,\bar T]$. This completes the proof. }
\end{proof}

\begin{remark}
  It is also important to stress that the third condition could have also been formulated in terms of an SOS condition instead of in terms of a condition on the coefficients on some polynomial. However, the former would lead to semidefinite programming conditions as opposed to linear programming ones for the latter. It is finally interesting to note the latter condition shares similarities with the those obtained from Polya's theorem \cite{Polya:28,Powers:01}.
\end{remark}

\blue{\begin{remark}[Asymptotic exactness, \cite{Handelman:88}]
As for the piecewise linear approximation, the relaxed problem is also a finite-dimensional linear program. Moreover, it can also be shown in this case that  the above relaxation is asymptotically exact in the sense that if the original conditions of Theorem \ref{th:cstDT2}, statement \eqref{item:th1:4}, hold then we can find a $d_h\in\mathbb{N}$ for which the above conditions are feasible.
\end{remark}}

\subsection{Examples}\label{sec:imp:examples}

We illustrate here the efficiency obtained results and compare them with existing ones. Comparisons between the proposed relaxation results are also made for completeness.
\begin{example}[Minimum dwell-time \#1]
Let us consider the system \eqref{eq:mainsyst} with the matrices \cite{Zhang:14b}
\begin{equation}\label{eq:minDT1}
  A=\begin{bmatrix}
    -3 & 1\\ 0 & -3  \end{bmatrix}\ \textnormal{and}\ J=\begin{bmatrix}
2 & 1\\ 0 & 2
  \end{bmatrix}.
\end{equation}
Clearly, the matrix $A$ is Hurwitz stable whereas the $J$ is Schur anti-stable. Simple calculations show that the matrix $e^{AT}J$ is Schur stable provided that $T>T^*_{min}:=\log(2)/3\approx 0.2311$. Using the minimum dwell-time stability result of Theorem \ref{th:minDT2}, (a), we find the value 0.2311 for the estimated minimum dwell-time value. As this value coincides with the lower bound obtained for the constant dwell-time case, then we can conclude that the estimated minimum dwell-time is exact. This proves that the proposed method is way less conservative than the one proposed in \cite{Zhang:14b} where the value 1.0986 is found for the minimum dwell-time.
\end{example}

\begin{example}[Minimum dwell-time \#2]
Let us consider the system \eqref{eq:mainsyst} with the matrices
\begin{equation}\label{eq:minDT2}
  A=\begin{bmatrix}
    -3 & 1\\ 2 & -8
  \end{bmatrix}\ \textnormal{and}\ J=\begin{bmatrix}
1 & \delta\\ 2 & 1
  \end{bmatrix}.
\end{equation}

When $\delta=1$, the conditions of Theorem \ref{th:minDT2}, (a) and of Remark \ref{rem:swapped_minimum} both yield the value $0.2443$ for the minimum dwell-time. This value was also found in \cite{Briat:11l,Briat:13d} using conditions based on quadratic Lyapunov functions. This value also coincides with the minimal value obtained in the constant dwell-time case, emphasizing then the exactness of the computed  minimum dwell-time.

On the other hand, when $\delta=3$, the conditions of Remark \ref{rem:swapped_minimum} yield the value 0.4290 while the value 0.3615 is obtained using the conditions of Theorem \ref{th:minDT2}, (a). The latter value is also found when using the conditions based on the use of quadratic Lyapunov functions stated in \cite{Briat:11l,Briat:13d} and coincides with the value obtained  in the constant dwell-time case. Hence exactness of the estimate of the minimum dwell-time is again proved. Moreover, the discrepancy between the results obtained with the conditions in Theorem \ref{th:minDT2} and in Remark \ref{rem:swapped_minimum} demonstrate the non-equivalence of the conditions and in the necessity of considering both in order to obtain less conservative result. Note that this phenomenon does not occur when considering quadratic Lyapunov function; see e.g. \cite{Briat:12h}. A comparison of the different computational methods described in Section \ref{sec:imp:computation} is performed and the results are summarized in Table \ref{table:minDT1} where we can see that all the approaches are able to approach the value obtained using the condition of Theorem \ref{th:minDT2}, (b), when the discretization order $d_g$ or the polynomial degrees $d_s$ or $d_h$ increase. However, this comes at the price of an increase of the computational complexity and the solving time. \blue{We can see that the sum of squares approach performs the best here in terms of solving time and total number of variables (sum of the number primal and dual variables). Then, comes the relaxation based on Handelman's theorem and, finally, the discretization approach, which performs the worst here. Note that the poor convergence and high computational complexity of the discretization approach was also pointed out in \cite{Briat:14f,Briat:15f}. However, for larger problems, the approach based on Handelman's Theorem may be beneficial over the SOS one because of the linear nature of the conditions, as opposed to the semidefinite structure of the SOS conditions that may indeed scale badly.}
\setlength\extrarowheight{1pt}
\begin{table}
  \centering
  \caption{Comparison of the numerical results obtained for the minimum dwell-time using the discretization approach (Section \ref{sec:discretization}), the sum of squares approach (Section \ref{sec:SOS}) and the Handelman-based approach (Section  \ref{sec:handelman}). For each method, we give the estimate for the minimum dwell-time $\bar T$, the number of primal/dual variables of the optimization problem and the overall solving time in seconds. For fairness, the solver SeDuMi is used for solving all the optimization problems.}\label{table:minDT1}
  \begin{tabular}{|c||c||c|c|c|c|c|}
  \hline
    System & Result & Method & Computed $\bar{T}$ & No. vars. & Solving time\\
    \hline
    \hline
  \multirow{11}{*}{\makecell{System \eqref{eq:minDT2},\\ $\delta=1$}} & Rem. \ref{rem:swapped_minimum}    & -- &  0.2443 &  6/2 & 0.5278\\
                                                                                                                                         & Th. \ref{th:minDT2}, (c)  & -- & 0.2443  & 6/2 & 0.4703\\
  \cline{2-6}
                                                                                                            & Th. \ref{th:minDT2}, (f) & Discretized ($d_g=11$) &0.2843  & 46/22 & 0.7269\\
                                                                                                            & Th. \ref{th:minDT2}, (f) & Discretized ($d_g=51$) & 0.2521 & 206/102 & 1.3825\\
                                                                                                            & Th. \ref{th:minDT2}, (f) & Discretized ($d_g=101$) & 0.2482 & 406/202  & 3.2078\\
                                                                                                            & Th. \ref{th:minDT2}, (f) & Discretized ($d_g=151$) & 0.2469 & 606/302 & 5.7835\\
                                                                                                              \cline{2-6}
                                                                                                            & Th. \ref{th:minDT2}, (f) & SOS ($d_s=1$) & 0.2769 & 38/16 & 0.1925\\
                                                                                                            & Th. \ref{th:minDT2}, (f) & SOS ($d_s=2$) & 0.2450 &  66/20 & 0.2584\\
                                                                                                            & Th. \ref{th:minDT2}, (f) & SOS ($d_s=3$) & 0.2444 & 102/24 & 0.3112\\
                                                                                                              \cline{2-6}
                                                                                                            & Th. \ref{th:minDT2}, (f) & Handelman ($d_h=3$) & 0.2598 & 34/28 & 0.7398\\
                                                                                                            & Th. \ref{th:minDT2}, (f) & Handelman ($d_h=5$) & 02450 & 60/54& 0.8404\\
                                                                                                            & Th. \ref{th:minDT2}, (f) & Handelamn ($d_h=7$) & 0.2443 & 94/88 & 0.9914\\
  \hline
  \hline
  \multirow{11}{*}{\makecell{System \eqref{eq:minDT2},\\ $\delta=3$}} & Rem. \ref{rem:swapped_minimum}    & --&  0.4290 & 6/2 & 0.5801\\
                                                                                                                                         & Th. \ref{th:minDT2}, (c)  & -- & 0.3615  & 6/2 & 0.5054\\
    \cline{2-6}
                                                                                                            & Th. \ref{th:minDT2}, (f) & Discretized  ($d_g=11$) & 0.4501 & 46/22 & 0.6581\\
                                                                                                            & Th. \ref{th:minDT2}, (f) & Discretized  ($d_g=51$) & 0.3778 & 206/102  & 1.4356\\
                                                                                                            & Th. \ref{th:minDT2}, (f) & Discretized  ($d_g=101$) & 0.3696 & 406/202  & 3.0860\\
                                                                                                            & Th. \ref{th:minDT2}, (f) & Discretized  ($d_g=151$) & 0.3669 & 606/302 & 5.8264\\
                                                                                                              \cline{2-6}
                                                                                                            & Th. \ref{th:minDT2}, (f) & SOS  ($d_s=1$) & 0.6078 & 38/16 & 0.2907\\
                                                                                                            & Th. \ref{th:minDT2}, (f) & SOS  ($d_s=2$) & 0.3686 & 66/20 & 0.2385\\
                                                                                                            & Th. \ref{th:minDT2}, (f) & SOS  ($d_s=3$) & 0.3617 & 102/24 & 0.3108\\
                                                                                                              \cline{2-6}
                                                                                                            & Th. \ref{th:minDT2}, (f) & Handelman  ($d_h=3$) & 0.4698 & 34/28 & 0.6792\\
                                                                                                            & Th. \ref{th:minDT2}, (f) & Handelman  ($d_h=6$) & 0.3636 & 76/70 & 0.8507\\
                                                                                                            & Th. \ref{th:minDT2}, (f) & Handelamn  ($d_h=10$) & 0.3615 & 160/154 & 1.0940\\
  \hline
  \end{tabular}
\end{table}
\end{example}

\begin{example}[Maximum dwell-time \#1]
Let us consider the system \eqref{eq:mainsyst} with the matrices \cite{Zhang:14b}
\begin{equation}\label{eq:maxDT1}
  A=\begin{bmatrix}
    0.5 & 1\\ 0 & 0.5  \end{bmatrix}\ \textnormal{and}\ J=\begin{bmatrix}
0.1 & 0.2\\ 0 & 0.1
  \end{bmatrix}.
\end{equation}
Clearly, the matrix $A$ is anti-(Hurwitz)stable whereas $J$ is Schur stable. It is immediate to establish that for any $T<T^*_{max}=2\log(10)\approx 4.6051$, the matrix $e^{AT}J$ is Schur stable. Using Theorem \ref{th:maxDT2}, (c), we find the value 4.6051 for the estimated maximum dwell-time, which coincides with the value $T^*_{max}$ computed for the constant dwell-time case. Hence, the estimate of the maximum dwell-time is exact and is way more accurate than the value 1.2040 computed with the method of \cite{Zhang:14b}.
\end{example}

\begin{example}[Maximum dwell-time \#2]
Let us consider the system \eqref{eq:mainsyst} with the matrices \cite{Zhang:14b}
\begin{equation}\label{eq:maxDT2}
  A=\begin{bmatrix}
    -1 & 5\\ 2 & 3  \end{bmatrix}\ \textnormal{and}\ J=\begin{bmatrix}
   0.15 & 0.1\\ 0.05 &0.25
  \end{bmatrix}.
\end{equation}
The matrix $A$ is unstable whereas $J$ is Schur stable, hence the system admits a maximum dwell-time. However, since $A$ is not is anti-(Hurwitz)stable, then Theorem \ref{th:maxDT2} does not apply (see Remark \ref{rem:antistable} and Remark \ref{rem:antistable2}) and we have to consider instead a range dwell-time result; i.e. Theorem \ref{th:rangeDT}. We get the results summarized in Table \ref{table:rangeDT1} where we compare the SOS approach for solving the conditions of Theorem \ref{th:rangeDT}, (c) and a gridding approach for solving the condition of \ref{th:rangeDT}, (b) where we consider $N_p=201$ gridding points. We can observe that despite the complexity in terms of the total number of variables is comparable, the SOS conditions are way faster to solve than the gridded ones. This demonstrates that, once again, the SOS approach is the method of choice. It seems also important to mention that, unlike the SOS approach, the gridded approach in non-exact as we only consider $N_p$ points in the interval $[T_{min},T_{max}]$ instead of the whole interval.
\end{example}

\setlength\extrarowheight{1pt}
\begin{table}
\blue{\centering
    \caption{Comparison of different numerical results obtained for the maximum dwell-time using Theorem \ref{th:maxDT2} and Theorem \ref{th:rangeDT}. When using Theorem \ref{th:rangeDT}, we set $T_{min}=10^{-5}$ and $T_{max}=\bar T$. The gridded conditions are considered over $N_p=201$ gridding points. For each method, we give the estimate for the maxium dwell-time $\bar T$, the number of primal/dual variables of the optimization problem and the overall solving time in seconds. For fairness, the solver SeDuMi is used for solving all the optimization problems.}\label{table:maxDT1}
  \begin{tabular}{|c||c||c|c|c|c|}
  \hline
    & Result & Method & Computed $\bar T$  & No. vars. & Solving time\\
    \hline
    \hline
        \multirow{5}{*}{System \eqref{eq:maxDT1}}&  Th. \ref{th:maxDT2}, (c) & -- & 4.6051 & 6/2 & 0.3096\\
        \hline
    &  Th. \ref{th:maxDT2}, (f)& SOS ($d_s=1$) & 3.2724 & 37/15 & 0.3323\\
    &Th. \ref{th:maxDT2}, (f)& SOS ($d_s=2$) &  4.5610 &  65/19  & 0.3748\\
    &Th. \ref{th:maxDT2}, (f)& SOS ($d_s=3$) &  4.6023 &  101/23  & 0.3846 \\
    \cline{2-6}
    &constant case & $\rho(e^{AT}J)<1$ &  4.6051 & -- & --\\
    \hline
    \hline
        \multirow{5}{*}{System \eqref{eq:maxDT2}}& Th. \ref{th:rangeDT}, (b)& Gridded ($N_p=201$) & 0.2633 & 204/2 & 1.9869\\
        \hline
     &   Th. \ref{th:rangeDT}, (c)& SOS ($d_s=1$) & 0.2337 & 60/22 & 0.2049\\
    &Th. \ref{th:rangeDT}, (c)& SOS ($d_s=2$) & 0.2631 & 112/30  & 0.2645\\
    &Th. \ref{th:rangeDT}, (c)& SOS ($d_s=3$) & 0.2633 & 180/38  &0.3005 \\
    \cline{2-6}
    &constant case & $\rho(e^{AT}J)<1$ &(0,0.2633) & -- & --\\
    \hline
  \end{tabular}}
\end{table}

\begin{example}[Range dwell-time]
Let us consider the system \eqref{eq:mainsyst} with the matrices
\begin{equation}\label{eq:rangeDT1}
  A=\begin{bmatrix}
    -4 & 1\\ 2 & 1
  \end{bmatrix}\ \textnormal{and}\ J=\begin{bmatrix}
2 & 0\\ 1 & 0.1
  \end{bmatrix}.
\end{equation}
In this case, the matrices $A$ and $J$ are unstable but we can prove that when $T\in(0.2779, 0.6056)$ then the matrix $e^{AT}J$ is Schur stable. As in the previous example, we compare the SOS approach for solving the conditions of Theorem \ref{th:rangeDT}, (c) and a gridding approach for solving the condition of \ref{th:rangeDT}, (b) where we consider $N_p=201$ gridding points. While the methods have a comparable complexity in terms of the number of variables, the SOS method is again faster than the gridded approach. Note also that it is not possible to verify the accuracy of the estimate for the minimum dwell-time by comparing it with the values obtained for the constant dwell-time case. The maximum dwell-time is readily seen to be accurate in the present case.
\end{example}

\begin{table}
  \centering
    \caption{Comparison of the numerical results obtained for the range dwell-time using the sum of squares approach (Section \ref{sec:SOS}) and a naive and inaccurate gridding of the conditions of statement (b) of Theorem \ref{th:rangeDT} with $N_p=201$ gridding points. For each method, we give the estimate for the range dwell-time $(T_{min},T_{max})$, the number of primal/dual variables of the optimization problem and the overall solving time in seconds. SeDuMi is used for solving all the optimization problems.}\label{table:rangeDT1}
  \begin{tabular}{|c||c||c|c|c|c|}
  \hline
    & Result & Method & $(T_{min},T_{max})$  & No. vars. & Solving time\\
    \hline
    \hline
   %     \multirow{5}{*}{System \eqref{eq:maxDT2}}& Th. \ref{th:rangeDT}, (c)& SOS ($d_s=1$) & (0,0.2337) & 60/22 & 0.2049\\
%    &Th. \ref{th:rangeDT}, (c)& SOS ($d_s=2$) & (0, 0.2631) & 112/30  & 0.2645\\
%    &Th. \ref{th:rangeDT}, (c)& SOS ($d_s=3$) & (0, 0.2633) & 180/38  &0.3005 \\
%    &Th. \ref{th:rangeDT}, (b)& Gridded ($N_p=201$) & (0, 0.2633) & 204/2 & 1.9869\\
%    \cline{2-6}
%    &constant case & $\rho(e^{AT}J)<1$ &(0,0.2633) & -- & --\\
%    \hline
%    \hline
    \multirow{5}{*}{System \eqref{eq:rangeDT1}} &Th. \ref{th:rangeDT}, (b) & Gridded ($N_p=201$) & (0.3275, 0.6056) & 204/2 & 2.0025\\
    \hline
    &Th. \ref{th:rangeDT}, (c) & SOS ($d_s=1$) & infeasible & 60/22 & --\\
    &Th. \ref{th:rangeDT}, (c) &SOS ($d_s=2$) & (0.3339,0.5923) & 112/30 & 0.3347\\
    &Th. \ref{th:rangeDT}, (c) & SOS ($d_s=3$) & (0.3275, 0.6054) & 180/38& 0.4642\\
        \cline{2-6}
    & constant case & $\rho(e^{AT}J)<1$ & (0.2779, 0.6056) & -- & --\\
    \hline
  \end{tabular}
\end{table}

\section{Stabilization of positive linear impulsive systems}\label{sec:stabz}

As mentioned in the introduction, the rationale for introducing lifted conditions for characterizing the stability of linear positive impulsive systems under dwell-time constraints is to allow for the possibility of deriving similar conditions for uncertain systems and to extend them to control design; see e.g. \cite{Briat:13d,Briat:14f,Briat:15d}.  To this aim, let us consider in this section the system
\begin{equation}\label{eq:mainsystu}
\begin{array}{rcl}
    \dot{x}(t)&=&Ax(t)+B_cu_c(t),\ t\ne t_k\\
    x(t^+)&=&Jx(t)+B_du_d(t),\ t= t_k
\end{array}
\end{equation}
where $u_c\in\mathbb{R}^{m_c\times n}$ and $u_d\in\mathbb{R}^{m_d\times n}$ are the continuous and the discrete control inputs, respectively. In this section, no assumption is made on the system as only the positivity of the closed-loop system will matter; see e.g. \citep{Aitrami:07,Briat:11h}. As in Section \ref{sec:stab}, we will cover the cases of arbitrary dwell-time $(T_k\in\mathbb{R}_{>0}$ in Section \ref{sec:impz:arbitrary}, constant dwell-time ($T_k=\bar T$) in Section \ref{sec:impz:constant}, minimum dwell-time ($T_k\ge\bar T$) in Section \ref{sec:impz:minimum}, maximum dwell-time ($T_k\le\bar T$) in Section \ref{sec:impz:maximum} and range dwell-time ($T_k\in[T_{min},T_{max}]$) in Section \ref{sec:impz:range}. Illustrative examples are given in Section \ref{sec:impz:examples}.

\subsection{Stabilization under arbitrary dwell-time}\label{sec:impz:arbitrary}

For the arbitrary dwell-time case, we propose the following state-feedback control law
\begin{equation}\label{eq:sfcl0}
\begin{array}{rcl}
    u_c(t)&=&K_cx(t)\\
    u_d(t)&=&K_dx(t)
\end{array}
\end{equation}
where $K_c\in\mathbb{R}^{m_c\times n}$ and $K_d\in\mathbb{R}^{m_d\times n}$. \blue{The reason for considering such structure is that such controllers can be easily designed using the stability conditions for arbitrary dwell-times. Another motivation is that the control problem under arbitrary dwell-time can be interpreted as an infinite horizon control since we do not keep track in the control law of the next impulse time.} \bluee{By considering the stability conditions in Remark \ref{rem:arbDT2}, (a), we can readily obtain the following result:}
\begin{theorem}[Stabilization under arbitrary dwell-time]\label{th:arbDT_stabz}
Assume that there exist matrices $X\in\mathbb{D}_{\succ0}^n$, $U_c\in\mathbb{R}^{m_c\times n}$ and $U_d\in\mathbb{R}^{m_d\times n}$ and a scalar $\alpha\in\mathbb{R}$ such that the conditions
\begin{equation}\label{eq:arbDT_stabz:1}
  AX+B_cU_c+\alpha I_n\ge0,\ JX+B_dU_d\ge0,
\end{equation}
and
\begin{equation}\label{eq:arbDT_stabz:2}
[AX+B_cU_c]\mathds{1}_n<0\ \textnormal{and}\ [JX+B_dU_d-X]\mathds{1}_n<0
\end{equation}
hold. Then, there exists a controller of the form \eqref{eq:sfcl0} such that the closed-loop system \eqref{eq:mainsystu}-\eqref{eq:sfcl0} is positive and asymptotically stable for arbitrary dwell-time and suitable controller matrices are given by $K_c=U_cX^{-1}$ and $K_d=U_dX^{-1}$.
\end{theorem}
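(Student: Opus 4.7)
The plan is to apply the dual arbitrary-dwell-time stability criterion of Remark \ref{rem:arbDT2} to the closed-loop system, using the change of variables $K_c = U_c X^{-1}$, $K_d = U_d X^{-1}$ together with the witness vector $\lambda = X\mathds{1}_n$. Since $X \in \mathbb{D}^n_{\succ 0}$ is diagonal and strictly positive, both $X^{-1}$ and $\lambda$ are strictly positive, and the right-multiplication $M \mapsto MX$ preserves the sign of every individual entry of $M$. This is the central observation that makes the change of variables both convex and structure-preserving.

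First I would verify that the closed-loop system is positive. For the flow, the identity $AX + B_c U_c = (A + B_c K_c)X$ together with the condition $AX + B_c U_c + \alpha I_n \ge 0$ implies that every off-diagonal entry of $AX + B_c U_c$ is nonnegative (the scalar $\alpha$ only shifts the diagonal and plays no constraining role off-diagonal). Multiplying on the right by $X^{-1} > 0$, every off-diagonal entry of $A + B_c K_c$ is nonnegative, so this matrix is Metzler. For the jump, $JX + B_d U_d = (J + B_d K_d)X \ge 0$ immediately gives $J + B_d K_d \ge 0$ upon right-multiplication by the positive diagonal matrix $X^{-1}$.

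Next I would translate the two strict inequalities in \eqref{eq:arbDT_stabz:2} into the dual stability conditions of Remark \ref{rem:arbDT2}. Since $X\mathds{1}_n = \lambda$, we have
\begin{equation*}
(A + B_c K_c)\lambda = (AX + B_c U_c)\mathds{1}_n < 0
\end{equation*}
and
\begin{equation*}
(J + B_d K_d - I_n)\lambda = (JX + B_d U_d - X)\mathds{1}_n < 0,
\end{equation*}
which are exactly the dual conditions \eqref{eq:arbDT2} evaluated at the closed-loop matrices with the strictly positive vector $\lambda$. Invoking Remark \ref{rem:arbDT2} then yields asymptotic stability of the positive closed-loop system for any dwell-time sequence $T_k \in (0,\infty)$, completing the argument.

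The only delicate point, rather than a true obstacle, is the a priori restriction of $X$ to the positive diagonal cone. This restriction is what linearizes the otherwise bilinear products $K_c X$ and $K_d X$ in the Lyapunov-type inequalities and, more importantly, what guarantees that right-multiplication by $X$ preserves both the Metzler structure of the flow matrix and the entrywise nonnegativity of the jump matrix. A more general $X$ (say, nonnegative but non-diagonal) would destroy the sign structure of the off-diagonal entries of $A + B_c K_c$ and preclude a direct closed-loop positivity certificate from the linear program.
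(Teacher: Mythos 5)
Your proof is correct and follows essentially the same route as the paper: apply the dual arbitrary-dwell-time conditions of Remark \ref{rem:arbDT2} to the closed-loop matrices with the substitutions $\lambda = X\mathds{1}_n$, $U_c = K_cX$, $U_d = K_dX$, and read off closed-loop positivity from \eqref{eq:arbDT_stabz:1}. The only difference is that you spell out in more detail why the diagonal positive structure of $X$ makes the Metzler/nonnegativity deductions work, which the paper leaves as ``readily seen.''
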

\begin{proof}
  The closed-loop system \eqref{eq:mainsystu}-\eqref{eq:sfcl0} is given by
  \begin{equation}
\begin{array}{rcl}
    \dot{x}(t)&=&(A+B_cK_c)x(t),\ t\ne t_k,\\
    x(t^+)&=&(J+B_dK_d)x(t),\ t=t_k.
\end{array}
\end{equation}
By substituting the matrices of the closed-loop system in the conditions of Remark \ref{rem:arbDT2}, (a), yields the conditions
\begin{equation}
  (A+B_cK_c)\lambda<0\ \textnormal{and}\ \blue{(J+B_dK_d-I_n)\lambda<0.}
\end{equation}
Defining $X\in\mathbb{D}^n_{\succ0}$ as $\lambda=:X\mathds{1}_n$, we obtain the conditions in \eqref{eq:arbDT_stabz:2} where we have used the changes of variables $U_c=K_cX$ and $U_d=K_dX$. The conditions in \eqref{eq:arbDT_stabz:1} are readily seen to be equivalent to saying that $A+B_cK_c$ is Metzler and $J+B_dK_d$ is nonnegative. The proof is complete.
\end{proof}

\subsection{Stabilization under constant dwell-time}\label{sec:impz:constant}

For the constant dwell-time case, we propose the following state-feedback control law \cite{Briat:13d,Briat:15i}:
\begin{equation}\label{eq:sfcl1}
\begin{array}{rcl}
    u_c(t_k+\tau)&=&K_c(\tau)x(t_k+\tau),\ \tau\in(0,\bar T]\\
    u_d(t)&=&K_dx(t)\\
\end{array}
\end{equation}
where $K_c:[0,\bar T]\mapsto\mathbb{R}^{m_c\times n}$ and $K_d\in\mathbb{R}^{m_d\times n}$. \blue{Unlike in the arbitrary dwell-time case, the proposed controller does depend on the clock which measures the time elapsed since the last impulse time and anticipates over the next impulse time. In this regard, this controller can be interpreted as a finite-time stability controller over one dwell-time interval. Stabilization is then ensured by repeating this procedure over all dwell-time intervals. This controller structure is reminiscent to the problems of  designing optimal finite-horizon LQ controllers \cite{Sontag:98} whose solutions take the form of a time-varying state-feedback control law obtained from the solution of a differential Riccati equation. The difference here is that the time is replaced by the clock.} \bluee{The following result can be understood as being the ``positive systems" version of the results in \cite{Briat:11l,Briat:12h}:}
\begin{theorem}[Stabilization under constant dwell-time]\label{th:cstDT_stabz}
  The following statements are equivalent:
  \begin{enumerate}[(a)]
  \item There exists a controller of the form \eqref{eq:sfcl1} such that the closed-loop system \eqref{eq:mainsystu}-\eqref{eq:sfcl1} is positive and asymptotically stable under constant dwell-time $\bar T$.
    \item There exist a vector $\lambda\in\mathbb{R}_{>0}^n$, a matrix-valued function $K_c:[0,\bar T]\mapsto\mathbb{R}^{m_c\times n}$ and a matrix $K_d\in\mathbb{R}^{m_d\times n}$ such that the matrix $A+B_cK_c(\tau)$ is Metzler for all $\tau\in[0,\bar{T}]$, the matrix $J+B_dK_d$ is nonnegative and such that the inequality
    \begin{equation}\label{eq:constantDTZ0}
      [(J+B_dK_d)\Psi(\bar{T})-I_n]\lambda<0
    \end{equation}
    holds with
    \begin{equation}
      \dfrac{d\Psi(s)}{ds}=\left(A+B_cK_c(s)\right)\Psi(s),\ \Psi(0)=I,\ s\ge0.
\end{equation}
 \item There exist a matrix-valued function ${X:[0,\bar{T}]\mapsto\mathbb{D}^n}$, $X(0)\in\mathbb{D}_{\succ0}^n$, ${U:[0,\bar{T}]\mapsto\mathbb{R}^{m_c\times n}}$ and scalars $\eps,\alpha>0$ such that the inequalities
         \begin{equation}\label{eq:constantDTZ11}
          AX(\tau)+B_cU(\tau)+\alpha I\ge0,\ JX(\bar T)+B_dU_d\ge0,
         \end{equation}
    \begin{equation}\label{eq:constantDTZ12}
      \left[-\dot{X}(\tau)+AX(\tau)+B_cU_c(\tau)\right]\mathds{1}_n<0
    \end{equation}
  and
  \begin{equation}\label{eq:constantDTZ13}
    \left[JX(\bar T)+B_dU_d-X(0)+\eps I\right]\mathds{1}_n\le0
  \end{equation}
 hold for all $\tau\in[0,\bar{T}]$. Moreover, in such a case, suitable controller gains can be computed using the relations  $K(\tau)=U(\tau)X(\tau)^{-1}$ and $K_d=U_dX(\bar T)^{-1}$.
  \item There exist a matrix-valued function ${X:[0,\bar{T}]\mapsto\mathbb{D}^n}$, $X(\bar T)\in\mathbb{D}_{\succ0}^n$, ${U:[0,\bar{T}]\mapsto\mathbb{R}^{m_c\times n}}$ and scalars $\eps,\alpha>0$ such that the inequalities
         \begin{equation}\label{eq:constantDTZ21}
                  AX(\tau)+B_cU(\tau)+\alpha I\ge0,\ JX(0)+B_dU_d\ge0,
         \end{equation}
    \begin{equation}\label{eq:constantDTZ22}
      \left[\dot{X}(\tau)+AX(\tau)+B_cU_c(\tau)\right]\mathds{1}_n<0
    \end{equation}
  and
  \begin{equation}\label{eq:constantDTZ23}
    \left[JX(0)+B_dU_d-X(\bar T)+\eps I\right]\mathds{1}_n\le0
  \end{equation}
 hold for all $\tau\in[0,\bar{T}]$. Moreover, in such a case, suitable controller gains can be computed using the relations  $K(\tau)=U(\tau)X(\tau)^{-1}$ and $K_d=U_dX(0)^{-1}$.
   \end{enumerate}
\end{theorem}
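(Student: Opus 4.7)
The plan is to build on Theorem \ref{th:cstDT2} together with Proposition \ref{th:equivalence}, applied to the time-varying closed-loop system, and then to use a diagonal change of variables to linearize the bilinear stabilization conditions.

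For (a) $\Leftrightarrow$ (b), I first write out the closed loop under \eqref{eq:sfcl1}: inside a dwell interval the dynamics form the time-varying linear flow $\dot x(t_k+\tau) = (A+B_cK_c(\tau))x(t_k+\tau)$ for $\tau \in (0,\bar T]$, with the impulse $x(t^+) = (J+B_dK_d)x$ at the endpoints. Positivity of this closed-loop system (now with a time-varying generator, treated pointwise in $\tau$) is equivalent to $A+B_cK_c(\tau)$ being Metzler for every $\tau \in [0,\bar T]$ together with $J+B_dK_d$ being nonnegative. The fundamental matrix over one dwell is precisely $\Psi(\bar T)$ as defined in the statement, so the embedded discrete-time system has the nonnegative monodromy $(J+B_dK_d)\Psi(\bar T)$. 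Proposition \ref{th:equivalence} then equates asymptotic stability of the impulsive closed loop with Schur stability of this matrix, and parts (c)--(d) of Proposition \ref{prop:Schur} rewrite Schur stability exactly as \eqref{eq:constantDTZ0}.

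For (b) $\Rightarrow$ (c), I start from $(\lambda, K_c(\cdot), K_d)$ solving (b) and construct $\mu(\tau) := e^{\beta \tau}\Psi(\tau)\lambda$ on $[0,\bar T]$ for a small $\beta > 0$. Since $\mu$ satisfies $\dot\mu = (\beta I + A + B_cK_c(\tau))\mu$ with $\mu(0) = \lambda > 0$ and Metzler generator, its components remain strictly positive across $[0,\bar T]$. I then set $X(\tau) := \diag(\mu(\tau)) \in \mathbb{D}^n_{\succ0}$ together with $U_c(\tau) := K_c(\tau)X(\tau)$ and $U_d := K_dX(\bar T)$. The diagonal-positive structure of $X(\tau)$ is precisely what allows the bilinear products $B_cK_c(\tau)X(\tau)$ and $B_dK_dX(\bar T)$ to be absorbed into the linear variables $B_cU_c(\tau)$ and $B_dU_d$ while preserving the Metzler/nonnegativity constraints. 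Direct substitution then yields \eqref{eq:constantDTZ11} (for $\alpha$ large enough to absorb the Metzler diagonal), \eqref{eq:constantDTZ12} (where the extra $\beta\mu>0$ term furnishes the required strict slack on the flow), and \eqref{eq:constantDTZ13} (from the strict inequality \eqref{eq:constantDTZ0} evaluated at $\tau = \bar T$, provided $\beta$ and $\eps$ are chosen small enough).

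For the converse (c) $\Rightarrow$ (b), controllers are recovered by $K_c(\tau) := U_c(\tau)X(\tau)^{-1}$ and $K_d := U_dX(\bar T)^{-1}$, and one sets $\lambda := X(0)\mathds{1}_n$. The main obstacle here is that (c) imposes $X(0) \in \mathbb{D}^n_{\succ0}$ but only $X(\tau) \in \mathbb{D}^n$ for $\tau > 0$, so one must show that $X(\tau)$ remains diagonal-positive throughout $[0,\bar T]$ for this inversion to be legitimate. For this, I read \eqref{eq:constantDTZ11} as saying that the provisional matrix $A + B_cK_c(\tau)$ would be Metzler, and \eqref{eq:constantDTZ12} then translates, through the relation $\mu := X\mathds{1}_n$, into the componentwise strict inequality $\dot\mu(\tau) > (A+B_cK_c(\tau))\mu(\tau)$. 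A Gr\"onwall-type comparison against the nonnegative flow generated by the Metzler matrix $A+B_cK_c$ from $\mu(0) > 0$ then forces $\mu(\tau) > 0$ throughout $[0,\bar T]$, i.e.\ $X(\tau) \in \mathbb{D}^n_{\succ0}$. Reversing the substitution, the flow integrates to $\mu(\bar T) = \Psi(\bar T)X(0)\mathds{1}_n$ up to slack, which combined with \eqref{eq:constantDTZ13} produces \eqref{eq:constantDTZ0}. The equivalence (b) $\Leftrightarrow$ (d) is then obtained by the time-reversal substitution $\tilde X(\tau) := X(\bar T - \tau)$, $\tilde U_c(\tau) := U_c(\bar T - \tau)$, which flips the sign of $\dot X$ in the flow inequality and swaps the roles of $X(0)$ and $X(\bar T)$ in the jump condition and in the positivity hypothesis, mirroring the (d) $\Leftrightarrow$ (e) step in the proof of Theorem \ref{th:cstDT2}.
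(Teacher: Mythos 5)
Your overall architecture is the paper's: reduce (a) to the Schur stability of the nonnegative monodromy matrix $(J+B_dK_d)\Psi(\bar T)$ (the paper invokes Proposition \ref{prop:Hurwitz}/\ref{prop:Schur} exactly as you do), and pass between (b) and (c) by the diagonal substitution $\zeta(\tau)=X(\tau)\mathds{1}_n$, $U_c(\tau)=K_c(\tau)X(\tau)$, $U_d=K_dX(\bar T)$, recognizing the resulting inequalities as the clock-dependent stability conditions of Theorem \ref{th:cstDT2}/Remark \ref{rem:swapped_constant} applied to the closed loop. Your (b)\,$\Rightarrow$\,(c) construction $\mu(\tau)=e^{\beta\tau}\Psi(\tau)\lambda$ is a correct and welcome explicit version of what the paper leaves implicit (the factor $e^{\beta\tau}$ is genuinely needed to meet the strict inequality \eqref{eq:constantDTZ12}), and the time-reversal argument for (c)\,$\Leftrightarrow$\,(d) matches the paper.

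The gap is in your (c)\,$\Rightarrow$\,(b) step. You correctly observe that (c) only requires $X(0)\in\mathbb{D}^n_{\succ0}$ and $X(\tau)\in\mathbb{D}^n$ for $\tau>0$, so the inversion $K_c(\tau)=U_c(\tau)X(\tau)^{-1}$ needs justification; but your Gr\"onwall comparison does not supply it. The comparison $\mu(\tau)\ge\Phi(\tau,0)\mu(0)$ with $\Phi$ the transition matrix of $A+B_cK_c(\tau)$ only yields strictly positive components if that generator is locally integrable on $[0,\bar T]$, and the generator is $A+B_cU_c(\tau)X(\tau)^{-1}$ — it blows up precisely where $X(\tau)$ degenerates. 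The argument is therefore circular, and in fact the conclusion is false: take $n=m_c=1$, $B_c=1$, $A=a$, $X(\tau)=(\tau-\bar T/2)^2$ and $U_c(\tau)=-aX(\tau)+1-\alpha$ with $\alpha>\bar T+1$. Then $AX(\tau)+B_cU_c(\tau)+\alpha=1\ge0$ and $-\dot X(\tau)+AX(\tau)+B_cU_c(\tau)=-2(\tau-\tfrac{\bar T}{2})+1-\alpha<0$ on $[0,\bar T]$, yet $X(\bar T/2)=0$ while $X(0)=\bar T^2/4>0$; the alternative bound $\dot\mu_i>(AX+B_cU_c)_{ii}\ge-\alpha$ obtainable from \eqref{eq:constantDTZ11}--\eqref{eq:constantDTZ12} likewise cannot prevent $\mu_i$ from reaching zero. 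To be fair, the paper's own proof performs the change of variables without addressing invertibility at all, so you have spotted a real issue that the reference proof glosses over; but your patch does not close it. The honest repair is to require $X(\tau)\in\mathbb{D}^n_{\succ0}$ for all $\tau\in[0,\bar T]$ in statement (c) (which is what the computational relaxations effectively enforce), after which the remainder of your argument — integrating the flow inequality against the nonnegative transition matrix and combining with \eqref{eq:constantDTZ13} to recover \eqref{eq:constantDTZ0} — goes through as you describe.
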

\begin{proof}
The equivalence between statement (a) and statement (b) comes from Proposition \ref{prop:Hurwitz} and the fact that the state-transition matrix $(J+B_dK_d)\Psi(\bar T)$ is constant. We prove now that (c) implies (b). To prove this, it seems important to stress first that the proof of Theorem \ref{th:cstDT2} can be straightforwardly extended to time-inhomogeneous systems using state-transition matrices; see e.g. \cite{Briat:15c,Briat:15f}. First of all, the conditions in \eqref{eq:constantDTZ11} are equivalent to saying that the matrix $A+B_cK_c(\tau)$ is Metzler for all $\tau\in[0,\bar T]$ and that the matrix $J+B_dK_d$ is nonnegative. Let us consider now the conditions \eqref{eq:constantDTZ12} and \eqref{eq:constantDTZ13} where we use the changes of variables $\zeta(\tau)=X(\tau)\mathds{1}_n$, $K(\tau)=U(\tau)X(\tau)^{-1}$ and $K_d=U_dX(\bar T)^{-1}$, to get the following equivalent conditions:
\begin{equation}
  \begin{array}{l}
      -\dot{\zeta}(\tau)+(A+B_cK_c(\tau))\zeta(\tau)<0\\
    (J+B_dK_d)\zeta(\bar T)-\zeta(0)+\eps \mathds{1}_n\le0.
  \end{array}
\end{equation}
We can recognize above the constant dwell-time stability conditions stated in Remark \ref{rem:swapped_constant} applied to the dual of the closed-loop system \eqref{eq:mainsystu}-\eqref{eq:sfcl1}. The rest of the proof uses the same argument as in the proof of  Theorem \ref{th:cstDT2}. The reverse implication (i.e. (b) implies (c)) and the equivalence between the statements (c) and (d) also follow from the same arguments as in the proof of Theorem \ref{th:cstDT2}.
\end{proof}

\subsection{Stabilization under minimum dwell-time}\label{sec:impz:minimum}

For the minimum dwell-time case, we propose the following state-feedback control law \cite{Briat:13d,Briat:15i}:
\begin{equation}\label{eq:sfcl2}
\begin{array}{rcl}
    u_c(t_k+\tau)&=&\left\{\begin{array}{lcl}
    K_c(\tau)x(t_k+\tau)&&\text{if\ }\tau\in[0,\bar{T})\\
   K_c(\bar{T})x(t_k+\tau)&&\text{if\ }\tau\in[\bar{T},T_k).
  \end{array}\right.\\
    u_d(t)&=&K_dx(t)\\
\end{array}
\end{equation}
where $K_c:[0,\bar T]\mapsto\mathbb{R}^{m_c\times n}$ and $K_d\in\mathbb{R}^{m_d\times n}$. \blue{The rationale behind the use of this controller is the following. We consider a clock-dependent controller that depends on the time elapsed since the last impulse as in the constant dwell-time case. The only difference is that the matrix gain becomes constant when the value of the clock exceeds the value of the minimum dwell-time. The reason for locking the value of the controller gain is twofold. The first reason is for convenience since the stability conditions can be used in a non-conservative way for designing such controllers. The second one is to avoid implementation difficulties by enforcing the controller gain to be bounded as $\tau$ increase towards infinity.} \bluee{The following result can be understood as being the ``positive systems" version of the results in \cite{Briat:11l,Briat:12h}:}

\begin{theorem}[Stabilization under minimum dwell-Time]\label{th:minDT_stabz2}
 There exists a controller of the form \eqref{eq:sfcl2} such that the closed-loop system \eqref{eq:mainsystu}-\eqref{eq:sfcl2} is positive and asymptotically stable under minimum dwell-time $\bar T$ if one of the following equivalent statements holds:
  \begin{enumerate}[(a)]
    \item There exist a vector $\lambda\in\mathbb{R}_{>0}^n$, a matrix-valued function $K_c:[0,\bar T]\mapsto\mathbb{R}^{m_c\times n}$ and a matrix $K_d\in\mathbb{R}^{m_d\times n}$ such that the matrix $A+B_cK_c(\tau)$ is Metzler for all $\tau\in[0,\bar{T}]$, the matrix $J+B_dK_d$ is nonnegative and such that the inequalities
    \begin{equation}\label{eq:minDTza1}
      (A+B_cK_c(\bar{T}))\lambda<0
    \end{equation}
    and
    \begin{equation}\label{eq:minDTza2}
     \left[\Psi(\bar{T})(J+B_dK_d)-I_n\right]\lambda<0
    \end{equation}
    hold where
    \begin{equation}\label{eq:evol2}
      \dfrac{d\Psi(s)}{ds}=\left(A+B_cK_c(s)\right)\Psi(s),\ \Psi(0)=I,\ s\ge0.
\end{equation}
 \item There exist a matrix-valued function ${X:[0,\bar{T}]\mapsto\mathbb{D}^n}$, $X(\bar T)\in\mathbb{D}_{\succ0}^n$, ${U:[0,\bar{T}]\mapsto\mathbb{R}^{m_c\times n}}$ and scalars $\eps,\alpha>0$ such that the inequalities
         \begin{equation}\label{eq:dksqldksqmdkdkmdskqkm6}
                AX(\tau)+B_cU(\tau)+\alpha I\ge0,\ JX(0)+B_dU_d\ge0,
         \end{equation}
          \begin{equation}\label{eq:dksqldksqmdkdkmdskqkm62}
                  [AX(\bar T)+B_cU(\bar T)]\mathds{1}_n<0,
         \end{equation}
    \begin{equation}\label{eq:minCDTzc1}
      \left[-\dot{X}(\tau)+AX(\tau)+B_cU_c(\tau)\right]\mathds{1}_n<0
    \end{equation}
  and
  \begin{equation}\label{eq:minCDTzc3}
    \left[JX(\bar T)+B_dU_d-X(0)+\eps I\right]\mathds{1}_n\le0
  \end{equation}
 hold for all $\tau\in[0,\bar{T}]$. Moreover, in such a case, suitable controller gains can be computed using the relations  $K(\tau)=U(\tau)X(\tau)^{-1}$ and $K_d=U_dX(\bar T)^{-1}$.
   \end{enumerate}
\end{theorem}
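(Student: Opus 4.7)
The plan is to reduce the theorem to the minimum dwell-time stability analysis already established in Theorem \ref{th:minDT2} (and its swapped version in Remark \ref{rem:swapped_minimum}) applied to the time-varying closed-loop system produced by the controller \eqref{eq:sfcl2}. With that controller, the closed-loop flow is $\dot{x} = (A + B_cK_c(\tau))x$ for $\tau \in [0,\bar T)$ and $\dot{x} = (A + B_cK_c(\bar T))x$ for $\tau \geq \bar T$, while the jump map is $x(t^+) = (J + B_dK_d)x(t)$. Because the ``tail'' flow generator is the constant matrix $A + B_cK_c(\bar T)$, the minimum dwell-time stability certificate boils down to two requirements: Hurwitz stability of $A + B_cK_c(\bar T)$ along a copositive Lyapunov function $V(x)=\lambda^\T x$, which is precisely \eqref{eq:minDTza1}; and Schur stability of the one-step embedded map $\Psi(\bar T)(J + B_dK_d)$ expressed in the copositive Lyapunov framework, which is \eqref{eq:minDTza2}. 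This delivers statement (a) $\Rightarrow$ asymptotic stability of the closed-loop system under minimum dwell-time $\bar T$.

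For the equivalence (a) $\Leftrightarrow$ (b), I would perform the congruence-type change of variables $\zeta(\tau)=X(\tau)\mathds{1}_n$ with $X(\tau)$ restricted to the diagonal matrices (which is the usual trick for synthesizing state-feedback for positive systems, see \cite{Aitrami:07,Briat:11h}), together with $U_c(\tau) = K_c(\tau)X(\tau)$ and $U_d = K_dX(\bar T)$. Because $X(\tau)$ is diagonal with positive diagonal, the inverses needed to reconstruct $K_c(\tau) = U_c(\tau)X(\tau)^{-1}$ and $K_d = U_dX(\bar T)^{-1}$ are well-defined componentwise. Under this substitution, \eqref{eq:minCDTzc1} becomes the Lyapunov decrease along the closed-loop (dual) flow, \eqref{eq:minCDTzc3} becomes the copositive Lyapunov decrease across the jump, and \eqref{eq:dksqldksqmdkdkmdskqkm62} enforces the Hurwitz condition on $A + B_cK_c(\bar T)$. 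The positivity requirements translate directly: \eqref{eq:dksqldksqmdkdkmdskqkm6} is equivalent to $A + B_cK_c(\tau)$ being Metzler for all $\tau \in [0,\bar T]$ and $J + B_dK_d$ being nonnegative.

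To connect the lifted conditions of (b) back to the Schur condition \eqref{eq:minDTza2}, I would mimic the integration argument used in the proof of Theorem \ref{th:cstDT2}, statement \eqref{item:th1:4} $\Rightarrow$ \eqref{item:th1:2}, but for time-inhomogeneous flows via the state-transition matrix $\Psi(\cdot)$ satisfying \eqref{eq:evol2}. Integrating \eqref{eq:minCDTzc1} from $0$ to $\bar T$ yields $\zeta(0)^\T \Psi(\bar T) \leq \zeta(\bar T)^\T$, which combined with \eqref{eq:minCDTzc3} delivers the strict inequality encoding Schur stability of $\Psi(\bar T)(J + B_dK_d)$ via the copositive certificate $\lambda = \zeta(\bar T)$. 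The reverse direction mirrors the proof of \eqref{item:th1:2} $\Rightarrow$ \eqref{item:th1:4} in Theorem \ref{th:cstDT2}, choosing $\zeta(\tau) = \Psi(\tau)^{-\T}\lambda$ (transposed-adjoint state transition) and then lifting diagonally to recover $X(\tau)$.

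The main obstacle is precisely this last step: verifying that restricting $X(\tau)$ to be diagonal does not introduce conservatism relative to the abstract Lyapunov statement (a). This relies on the fact that linear copositive Lyapunov functions $V(x)=\lambda^\T x$ for linear positive systems are parameterized by a single positive vector $\lambda \in \mathbb{R}^n_{>0}$, which can always be encoded as $\lambda = X\mathds{1}_n$ with $X \in \mathbb{D}^n_{\succ 0}$, and moreover that the positivity preservation constraints are themselves componentwise (Metzler / nonnegative), so that the diagonal structure is both necessary and sufficient for the closed-loop positivity preservation constraints to admit a convex (affine-in-the-system-matrices) reformulation. The rest of the argument is a straightforward adaptation of the proof techniques used in Theorem \ref{th:cstDT_stabz} and those of \cite{Briat:15c,Briat:15f}.
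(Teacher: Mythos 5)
Your proposal follows essentially the same route as the paper: the change of variables $\zeta(\tau)=X(\tau)\mathds{1}_n$, $U(\tau)=K_c(\tau)X(\tau)$, $U_d=K_dX(\bar T)$ turns the conditions of statement (b) into the minimum dwell-time conditions of Theorem \ref{th:minDT2}, (f), applied to the dual of the closed-loop system, with the positivity constraints handled by \eqref{eq:dksqldksqmdkdkmdskqkm6} and the remaining equivalences inherited from the (time-inhomogeneous adaptation of the) proof of Theorem \ref{th:cstDT2}. Only beware of a transposition slip in your sketch: since the lifted conditions here are in column (dual) form, the comparison/integration step should yield $\Psi(\bar T)\zeta(0)\le\zeta(\bar T)$ rather than $\zeta(0)^\T\Psi(\bar T)\le\zeta(\bar T)^\T$, and the explicit certificate for the converse direction should be $\zeta(\tau)=\Psi(\tau)\Psi(\bar T)^{-1}\lambda$ rather than $\Psi(\tau)^{-\T}\lambda$.
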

\blue{\begin{proof}
As for the constant dwell-time case, the conditions in \eqref{eq:dksqldksqmdkdkmdskqkm6} ensure that the matrix $A+BK_c(\tau)$ is Metzler for all $\tau\in[0,\bar T]$ and that the matrix $J+B_dK_d$ is nonnegative. Considering now the change of variables $\zeta(\tau)=X(\tau)\mathds{1}_n$, $K(\tau)=U(\tau)X(\tau)^{-1}$ and $K_d=U_dX(\bar T)^{-1}$, we get that the conditions \eqref{eq:dksqldksqmdkdkmdskqkm62}, \eqref{eq:minCDTzc1} and \eqref{eq:minCDTzc3} are equivalent to
\begin{equation}
  \begin{array}{l}
   (A+B_cK_c(\bar T))\zeta(\bar T)<0\\
      -\dot{\zeta}(\tau)+(A+B_cK_c(\tau))\zeta(\tau)<0,\ \tau\in[0,\bar T]\\
    (J+B_dK_d)\zeta(\bar T)-\zeta(0)+\eps \mathds{1}_n\le0.
  \end{array}
\end{equation}
We can recognize above the minimum dwell-time stability conditions of Theorem \ref{th:minDT2}, (f), applied to the dual of the closed-loop system \eqref{eq:mainsystu}-\eqref{eq:sfcl2}. The rest of the proof follows from the same arguments as in the proof of Theorem \ref{th:cstDT2}.
\end{proof}}

%\blue{\begin{remark}
%\textbf{ADD REMARK THAT THE CONSTANT DT CONTROLLER CAN BE RETRIEVED USING T go to 0}
%\end{remark}}

\subsection{Stabilization under maximum dwell-time}\label{sec:impz:maximum}

For the maximum dwell-time case, we propose the following state-feedback control law \cite{Briat:13d,Briat:15i}:
\begin{equation}\label{eq:sfcl3}
\begin{array}{rcl}
    u_c(t_k+\tau)&=&K_c(\tau)x(t_k+\tau),\ \tau\in(0,T_k]\\
    u_d(t)&=&K_dx(t)\\
\end{array}
\end{equation}
where $K_c:[0,\bar T]\mapsto\mathbb{R}^{m_c\times n}$ and $K_d\in\mathbb{R}^{m_d\times n}$. \blue{As for the constant dwell-time case, the controller matrix gain depends on the clock in order to exploit the stability conditions in a convenient and efficient way.} \bluee{The following result can be understood as being the ``positive systems" version of the results in \cite{Briat:11l,Briat:12h} and is the maximum dwell-time counterpart of Theorem \ref{th:minDT_stabz2}:}

\begin{theorem}[Stabilization under maximum dwell-time]\label{th:maxDT_stabz}
 There exists a controller of the form \eqref{eq:sfcl2} such that the closed-loop system \eqref{eq:mainsystu}-\eqref{eq:sfcl2} is positive and asymptotically stable under maximum dwell-time $\bar T$ if one of the following equivalent statements holds:
  \begin{enumerate}[(a)]
    \item There exist a vector $\lambda\in\mathbb{R}_{>0}^n$, a matrix-valued function $K_c:[0,\bar T]\mapsto\mathbb{R}^{m_c\times n}$ and a matrix $K_d\in\mathbb{R}^{m_d\times n}$ such that the matrix $A+B_cK_c(\tau)$ is Metzler for all $\tau\in[0,\bar{T}]$, the matrix $J+B_dK_d$ is nonnegative and such that the inequalities
    \begin{equation}
      (A+B_cK_c(\bar{T}))\lambda>0
    \end{equation}
    and
    \begin{equation}
     \left[(J+B_dK_d)\Psi(\bar{T})-I_n\right]\lambda<0
    \end{equation}
    hold where
    \begin{equation}
      \dfrac{d\Psi(s)}{ds}=\left(A+B_cK_c(s)\right)\Psi(s),\ \Psi(0)=I,\ s\ge0.
\end{equation}
 \item There exist a matrix-valued function ${X:[0,\bar{T}]\mapsto\mathbb{D}^n}$, $X(\bar T)\in\mathbb{D}_{\succ0}^n$, ${U:[0,\bar{T}]\mapsto\mathbb{R}^{m_c\times n}}$ and scalars $\eps,\alpha>0$ such that the inequalities
         \begin{equation}\label{eq:kdsmkdsmlk1}
                    AX(\tau)+B_cU(\tau)+\alpha I\ge0,\ JX(0)+B_dU_d\ge0,
         \end{equation}
          \begin{equation}\label{eq:kdsmkdsmlk2}
                  [AX(\bar T)+B_cU(\bar T)]\mathds{1}_n>0,
         \end{equation}
    \begin{equation}\label{eq:kdsmkdsmlk3}
      \left[\dot{X}(\tau)+AX(\tau)+B_cU_c(\tau)\right]\mathds{1}_n<0
    \end{equation}
  and
  \begin{equation}\label{eq:kdsmkdsmlk4}
    \left[JX(0)+B_dU_d-X(\bar{T})+\eps I\right]\mathds{1}_n\le0
  \end{equation}
 hold for all $\tau\in[0,\bar{T}]$. Moreover, in such a case, suitable controller gains can be computed using the relations  $K(\tau)=U(\tau)X(\tau)^{-1}$ and $K_d=U_dX(0)^{-1}$.
   \end{enumerate}
\end{theorem}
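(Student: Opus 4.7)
The plan is to mirror the structure used for the minimum dwell-time stabilization result in Theorem \ref{th:minDT_stabz2}, adapted to reflect the sign change in the flow condition that distinguishes the maximum dwell-time case (see Theorem \ref{th:maxDT2} and Remark \ref{rem:antistable}). The heavy lifting has already been done in Theorem \ref{th:maxDT2}; the task here is to carry the closed-loop matrices through the lifting trick and a diagonal change of variables to render the resulting conditions affine in the design unknowns.

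First, I would write down the closed-loop system obtained by injecting \eqref{eq:sfcl3} into \eqref{eq:mainsystu}, namely $\dot{x}=(A+B_cK_c(\tau))x$ on the flow and $x(t^+)=(J+B_dK_d)x$ at the jumps, where $\tau$ is the clock elapsed since the previous impulse. The positivity of this closed-loop system is equivalent to $A+B_cK_c(\tau)$ being Metzler for every $\tau\in[0,\bar T]$ and $J+B_dK_d$ being nonnegative; the inequalities \eqref{eq:kdsmkdsmlk1} together with the choices $K_c(\tau)=U(\tau)X(\tau)^{-1}$ and $K_d=U_dX(0)^{-1}$ (using that $X(\tau)\in\mathbb{D}^n$ is diagonal so right-multiplication does not mix the off-diagonal signs) encode exactly this, with $\alpha$ absorbing the diagonal part.

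Second, I would perform the substitution $\zeta(\tau):=X(\tau)\mathds{1}_n$, which turns the three matrix-vector inequalities \eqref{eq:kdsmkdsmlk2}--\eqref{eq:kdsmkdsmlk4} into
\begin{equation}
(A+B_cK_c(\bar T))\zeta(\bar T)>0,
\end{equation}
\begin{equation}
\dot{\zeta}(\tau)+(A+B_cK_c(\tau))\zeta(\tau)<0,\quad \tau\in[0,\bar T],
\end{equation}
\begin{equation}
(J+B_dK_d)\zeta(0)-\zeta(\bar T)+\eps\mathds{1}_n\le0.
\end{equation}
These are exactly the maximum dwell-time conditions of Theorem \ref{th:maxDT2}, statement (g), applied to the dual of the closed-loop system, so asymptotic stability under $T_k\le\bar T$ follows directly, and the implication (b) $\Rightarrow$ statement of the theorem is established. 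The equivalence with (a) then reduces to the equivalence already proved in Theorem \ref{th:maxDT2}: integrating the differential inequality for $\zeta$ against the fundamental matrix $\Psi$ defined by \eqref{eq:evol2} yields $\Psi(\bar T)^\T\zeta(\bar T)\ge\zeta(0)$ pointwise in the positive cone, and combining this with the jump inequality produces $[(J+B_dK_d)\Psi(\bar T)-I_n]\lambda<0$ with $\lambda=\zeta(\bar T)$; the positive-flow condition at $\tau=\bar T$ gives $(A+B_cK_c(\bar T))\lambda>0$. The converse, from (a) to (b), is obtained by choosing the canonical $\zeta^*(\tau)=\Psi(\tau)^{-\T}\Psi(\bar T)^\T\lambda$ as in the proof of Theorem \ref{th:maxDT2}, which verifies the differential inequality with equality and thus furnishes a feasible $X(\tau)=\diag(\zeta^*(\tau))$.

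The main obstacle, as in the minimum dwell-time case, is that the controller $K_c(\tau)$ enters nonlinearly into the state-transition matrix $\Psi$ in statement (a), so a direct design based on (a) would require solving a nonlinear condition; the whole point of the lifting in (b) is to trade this nonlinearity for an infinite-dimensional but pointwise-affine condition in $(X(\tau),U(\tau),U_d)$. A secondary subtlety is the sign of the flow condition: unlike the minimum dwell-time case, here we require $A+B_cK_c(\bar T)$ to be, loosely speaking, ``anti-stable'' in the positive sense, which is precisely what \eqref{eq:kdsmkdsmlk2} enforces through the substitution, and this is what allows one to replace the semi-infinite condition ``stability holds for all $\theta\le\bar T$'' by its single endpoint $\theta=\bar T$ (cf. Remark \ref{rem:antistable}).
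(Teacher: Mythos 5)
Your proposal follows the paper's proof essentially step for step: positivity of the closed loop from \eqref{eq:kdsmkdsmlk1}, the change of variables $\zeta(\tau)=X(\tau)\mathds{1}_n$ with $K_c(\tau)=U(\tau)X(\tau)^{-1}$ and $K_d=U_dX(0)^{-1}$ (valid because $X(\tau)$ is diagonal positive), recognition of the three resulting inequalities as the maximum dwell-time conditions applied to the dual of the closed-loop system, and deferral of the remaining equivalences to the arguments of Theorem \ref{th:cstDT2}; the paper points to Remark \ref{rem:swapped_maximum} rather than Theorem \ref{th:maxDT2}, (g) — the swapped variant is the one whose endpoint positivity sits at $\bar T$, which is what your substitution actually produces — but this is a labelling detail, not a difference in substance. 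One slip in the supplementary detail you add (which the paper omits entirely): integrating $\dot{\zeta}(\tau)+(A+B_cK_c(\tau))\zeta(\tau)<0$ against the nonnegative left state-transition matrix $\Theta$ solving $\dot{\Theta}(\tau)=\Theta(\tau)(A+B_cK_c(\tau))$, $\Theta(0)=I$, yields $\Theta(\bar T)\zeta(\bar T)\le\zeta(0)$, i.e., $\zeta(0)$ dominates the propagated $\zeta(\bar T)$; your stated inequality $\Psi(\bar T)^\T\zeta(\bar T)\ge\zeta(0)$ has the direction reversed (and a spurious transpose), and with that direction the chaining through the nonnegative matrix $J+B_dK_d$ and the jump inequality does not close. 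The same care with transposes is needed in your ``canonical'' choice $\zeta^*$ for the converse, since for a time-varying $K_c$ the transition matrices of $M(\tau)$ and $M(\tau)^\T$ are not simply transposes of one another.
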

\begin{proof}
Similarly to as in the constant and minimum dwell-time case, the conditions  in \eqref{eq:kdsmkdsmlk1} ensure that the matrix $A+BK_c(\tau)$ is Metzler for all $\tau\in[0,\bar T]$ and that the matrix $J+B_dK_d$ is nonnegative. Considering now the change of variables $\zeta(\tau)=X(\tau)\mathds{1}_n$, $K(\tau)=U(\tau)X(\tau)^{-1}$ and $K_d=U_dX(0)^{-1}$, we get that the conditions \eqref{eq:kdsmkdsmlk2}, \eqref{eq:kdsmkdsmlk3} and \eqref{eq:kdsmkdsmlk4} are equivalent to
\begin{equation}
  \begin{array}{l}
    (A+B_cK_c(\bar T))\zeta(\bar T)<0\\
      \dot{\zeta}(\tau)+(A+B_cK_c(\tau))\zeta(\tau)<0,\ \tau\in[0,\bar T]\\
    (J+B_dK_d)\zeta(0)-\zeta(\bar T)+\eps \mathds{1}_n\le0.
  \end{array}
\end{equation}
We can recognize above the maximum dwell-time stability condition stated in Remark \ref{rem:swapped_maximum} applied to the dual of the closed-loop system \eqref{eq:mainsystu}-\eqref{eq:sfcl3}. The rest of the proof follows from the same arguments as in the proof of Theorem \ref{th:cstDT2}.
\end{proof}

\subsection{Stabilization under range dwell-time}\label{sec:impz:range}

For the range dwell-time case, we also consider the state-feedback control law of the form \eqref{eq:sfcl3}. \blue{The following result can be understood as being the ``positive systems" version of the results in \cite{Briat:11l,Briat:12h}:}
%
%\begin{theorem}[Range dwell-time - Robust case \#1]\label{th:rangeDT_stabz}
\begin{theorem}[Range dwell-time]\label{th:rangeDT_stabz}
   There exists a controller of the form \eqref{eq:sfcl2} such that the closed-loop system \eqref{eq:mainsystu}-\eqref{eq:sfcl2} is positive and asymptotically stable under range dwell-time $(T_{min},T_{max})$ if one of the following equivalent statements holds:
  \begin{enumerate}[(a)]
    \item There exist a vector $\lambda\in\mathbb{R}_{>0}^n$, a matrix-valued function $K_c:[0,\bar T]\mapsto\mathbb{R}^{m_c\times n}$ and a matrix $K_d\in\mathbb{R}^{m_d\times n}$ such that the matrix $A+B_cK_c(\tau)$ is Metzler for all $\tau\in[0,\bar{T}]$, the matrix $J+B_dK_d$ is nonnegative and such that the inequality
    \begin{equation}
     \left[\Psi(\theta)(J+B_dK_d)-I_n\right]\lambda<0
    \end{equation}
    holds where
    \begin{equation}
      \dfrac{d\Psi(s)}{ds}=\left(A+B_cK_c(s)\right)\Psi(s),\ \Psi(0)=I,\ s\ge0.
\end{equation}
     \item There exist a matrix-valued function ${X:[0,T_{max}]\mapsto\mathbb{D}^n}$, $X(0)\in\mathbb{D}_{\succ0}^n$, ${U:[0,\bar{T}]\mapsto\mathbb{R}^{m_c\times n}}$ and scalars $\eps,\alpha>0$ such that the inequalities
         \begin{equation}\label{eq:kdsmkdsmlkk1}
                    AX(\tau)+B_cU(\tau)+\alpha I\ge0,\ JX(0)+B_dU_d\ge0,
         \end{equation}
    \begin{equation}\label{eq:kdsmkdsmlkk2}
      \left[\dot{X}(\tau)+AX(\tau)+B_cU_c(\tau)\right]\mathds{1}_n<0
    \end{equation}
  and
  \begin{equation}\label{eq:kdsmkdsmlkk3}
    \left[JX(0)+B_dU_d-X(\theta)+\eps I\right]\mathds{1}_n\le0
  \end{equation}
 hold for all $\tau\in[0,T_{max}]$ and $\theta\in[T_{min},T_{max}]$. Moreover, in such a case, suitable controller gains can be computed using the relations  $K(\tau)=U(\tau)X(\tau)^{-1}$ and $K_d=U_dX(0)^{-1}$.
 \end{enumerate}
\end{theorem}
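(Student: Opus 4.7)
The plan is to follow the template already established for Theorem \ref{th:cstDT_stabz}, Theorem \ref{th:minDT_stabz2} and Theorem \ref{th:maxDT_stabz}, with the range dwell-time stability result of Theorem \ref{th:rangeDT} and Remark \ref{rem:swapped_range} replacing the dwell-time specific conditions. First I would argue that the conditions in \eqref{eq:kdsmkdsmlkk1} encode the positivity of the closed-loop system: since $X(0)\in\mathbb{D}^n_{\succ0}$ and $X(\tau)$ is diagonal, the strict inequality in \eqref{eq:kdsmkdsmlkk2} together with continuity ensures that $X(\tau)\in\mathbb{D}^n_{\succ0}$ for all $\tau\in[0,T_{max}]$. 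Multiplying on the right by $X(\tau)^{-1}\ge0$ and $X(0)^{-1}\ge0$ respectively, the first inequality in \eqref{eq:kdsmkdsmlkk1} becomes $A+B_cK_c(\tau)+\alpha X(\tau)^{-1}\ge0$, which is equivalent to $A+B_cK_c(\tau)$ being Metzler, whereas the second one yields that $J+B_dK_d$ is nonnegative, with $K_c(\tau)=U_c(\tau)X(\tau)^{-1}$ and $K_d=U_dX(0)^{-1}$.

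Next I would perform the change of variables $\zeta(\tau)=X(\tau)\mathds{1}_n$ in \eqref{eq:kdsmkdsmlkk2}-\eqref{eq:kdsmkdsmlkk3} (which is well-defined since $X$ is diagonal, hence $X(\tau)\mathds{1}_n$ just collects its diagonal entries), together with the same formulas for $K_c(\tau)$ and $K_d$. The two inequalities then become
\begin{equation*}
  (A+B_cK_c(\tau))\zeta(\tau)+\dot\zeta(\tau)<0,\ \tau\in[0,T_{max}],
\end{equation*}
\begin{equation*}
  (J+B_dK_d)\zeta(0)-\zeta(\theta)+\eps\mathds{1}_n\le0,\ \theta\in[T_{min},T_{max}],
\end{equation*}
which, after transposition, are exactly the range dwell-time stability conditions stated in Remark \ref{rem:swapped_range} applied to the dual of the closed-loop system \eqref{eq:mainsystu}-\eqref{eq:sfcl2}. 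Hence the dual closed-loop system is asymptotically stable under range dwell-time $(T_{min},T_{max})$, and by Proposition \ref{th:equivalence} so is the closed-loop system itself. This proves that (b) implies the existence of a stabilizing controller of the prescribed form.

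For the equivalence between (a) and (b), I would proceed as in the proof of Theorem \ref{th:cstDT2}: integrating $\dot\zeta(\tau)\ge(A+B_cK_c(\tau))\zeta(\tau)$ from $0$ to $\theta$ using the state-transition matrix $\Psi$ defined by \eqref{eq:evol2} yields $\zeta(\theta)\ge\Psi(\theta)\zeta(0)$, which combined with the jump inequality produces $[\Psi(\theta)(J+B_dK_d)-I_n]\zeta(0)<0$ for every $\theta\in[T_{min},T_{max}]$, i.e.\ statement (a) with $\lambda=\zeta(0)$. The reverse direction is obtained by picking $\zeta^*(\tau)=\Psi(\tau)\lambda$ and $U_c^*(\tau)=K_c(\tau)\mathrm{diag}(\zeta^*(\tau))$ after first observing that without loss of generality $\zeta^*(\tau)$ can be made componentwise positive by using the Metzler property of the closed-loop generator, and then restoring the strict inequalities by taking $\alpha$ and $\eps$ sufficiently small.

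The main obstacle I anticipate is the bookkeeping associated with choosing the ``correct'' (swapped versus non-swapped, primal versus dual) variant of Theorem \ref{th:rangeDT}: the fact that $X(0)$ rather than $X(T_{max})$ is required to be positive in statement (b), and that $K_d$ is recovered from $U_dX(0)^{-1}$ rather than $U_dX(T_{max})^{-1}$, dictates that one must apply Remark \ref{rem:swapped_range} to the \emph{dual} closed-loop system, which is what makes the change of variables $K_c(\tau)=U_c(\tau)X(\tau)^{-1}$ linearize the bilinearity. A secondary (minor) technicality is justifying that $X(\tau)$ stays in $\mathbb{D}^n_{\succ0}$ on the whole interval $[0,T_{max}]$ from positivity at $\tau=0$ together with the strict inequality \eqref{eq:kdsmkdsmlkk2}, but this is a standard continuity argument.
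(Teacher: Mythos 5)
Your proposal follows essentially the same route as the paper: the conditions \eqref{eq:kdsmkdsmlkk1} give positivity of the closed loop, the change of variables $\zeta(\tau)=X(\tau)\mathds{1}_n$, $K_c(\tau)=U(\tau)X(\tau)^{-1}$, $K_d=U_dX(0)^{-1}$ turns \eqref{eq:kdsmkdsmlkk2}--\eqref{eq:kdsmkdsmlkk3} into the conditions of Remark \ref{rem:swapped_range} applied to the dual closed-loop system, and the equivalence between (a) and (b) is handled by the same state-transition-matrix argument as in Theorem \ref{th:cstDT2}. The one place where you go slightly beyond the paper is the claimed continuity argument for $X(\tau)\in\mathbb{D}^n_{\succ0}$ on all of $[0,T_{max}]$: the differential inequality $\dot{\zeta}(\tau)+(A+B_cK_c(\tau))\zeta(\tau)<0$ does not by itself prevent a component of $\zeta$ from crossing zero (at a first touching time the Metzler structure forces that component to be strictly decreasing, so it passes through zero rather than being repelled), hence invertibility of $X(\tau)$ is really an implicit hypothesis here, exactly as it is in the paper's own proof.
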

\begin{proof}
Similarly to as in the constant, minimum and maximum dwell-time case, the conditions  in \eqref{eq:kdsmkdsmlkk1} ensure that the matrix $A+BK_c(\tau)$ is Metzler for all $\tau\in[0,\bar T]$ and that the matrix $J+B_dK_d$ is nonnegative. Considering now the change of variables $\zeta(\tau)=X(\tau)\mathds{1}_n$, $K(\tau)=U(\tau)X(\tau)^{-1}$ and $K_d=U_dX(0)^{-1}$, we get that the conditions \eqref{eq:kdsmkdsmlkk2} and \eqref{eq:kdsmkdsmlkk3} are equivalent to
\begin{equation}
  \begin{array}{l}
      \dot{\zeta}(\tau)+(A+B_cK_c(\tau))\zeta(\tau)<0,\ \tau\in[0,\bar T]\\
    (J+B_dK_d)\zeta(0)-\zeta(\bar T)+\eps \mathds{1}_n\le0.
  \end{array}
\end{equation}
We can recognize above the range dwell-time stability condition stated in Remark \ref{rem:swapped_range} applied to the dual of the closed-loop system \eqref{eq:mainsystu}-\eqref{eq:sfcl2}. The rest of the proof follows from the same arguments as in the proof of Theorem \ref{th:cstDT2}.
\end{proof}

\begin{remark}
 A dwell-time-scheduled control law of the form
\begin{equation}\label{eq:sfcl3b}
\begin{array}{rcl}
    u_c(t_k+\tau)&=&K_c(\tau)x(t_k+\tau),\ \tau\in(0,T_k]\\
    u_d(t)&=&K_d(T_k)x(t),\ t=t_k\\
\end{array}
\end{equation}
where $K_c:[0,\bar T]\mapsto\mathbb{R}^{m_c\times n}$ and $K_d:[T_{min},T_{max}]\mapsto\mathbb{R}^{m_d\times n}$ can be easily designed using the above result by simply substituting the matrix $U_d\in\mathbb{R}^{m_d\times n}$ by a matrix-valued function $U_d:[T_{min},T_{max}]\mapsto\mathbb{R}^{m_d\times n}$. Then, the controller gain can be simply computed using $K_d(\theta)=U_d(\theta)X(0)^{-1}$.
\end{remark}

\subsection{Examples}\label{sec:impz:examples}

We illustrate in this section the efficiency of the proposed result on several stabilization problems.
\begin{example}[Stabilization under minimum dwell-time]
Let us consider the impulsive system \eqref{eq:mainsystu} with matrices
\begin{equation}\label{eq:stabz:minDT1}
  A=\begin{bmatrix}
    3 & -1\\
    2 & -1
  \end{bmatrix},\ B_c=\begin{bmatrix}
    1\\0
  \end{bmatrix},\ J = \begin{bmatrix}
    2 & 1\\ 0 & 0.7
  \end{bmatrix},\ \blue{B_d=\begin{bmatrix}
    1\\
    0
  \end{bmatrix}}.
\end{equation}
\blue{The uncontrolled version of the above system cannot be stable under minimum dwell-time since the matrix $A$ is not Hurwitz stable. Hence, we propose to compute a control law of the form \eqref{eq:sfcl2} that makes the closed-loop system positive and asymptotically stable under minimum dwell-time $\bar T=0.1$. To this aim, we apply the SOS method of Section \ref{sec:SOS} to the conditions of Theorem \ref{th:minDT_stabz2}, (b), with polynomial degree $d_s=1$ and we get the following controller matrices
\begin{equation}\label{eq:stabz:minDT1_cl}
  \begin{array}{rcl}
  K_d&=&\begin{bmatrix}
   -1.7215 &  -0.8685
  \end{bmatrix},\vspace{2mm}\\
K_c(\tau)&=&\begin{bmatrix}
    \dfrac{-1.3523\tau^2    +0.2862\tau   -0.0660}{0.2513\tau^2   -0.0857\tau+    0.1383} &    \dfrac{-1.8806\tau^2   -0.3758\tau   -0.2180}{-1.4274\tau^2   -0.2895\tau+   0.4749}
  \end{bmatrix}.
  \end{array}
\end{equation}
To validate the design, we perform a simulation with randomly generated impulse times satisfying the minimum dwell-time condition and we obtain the results depicted in Figure~\ref{fig:stabz_minDT} where we can see the controller efficiently stabilizes the system and drives the state of the closed-loop system to zero.
\begin{figure}[H]
  \centering
  \includegraphics[width=0.45\textwidth]{./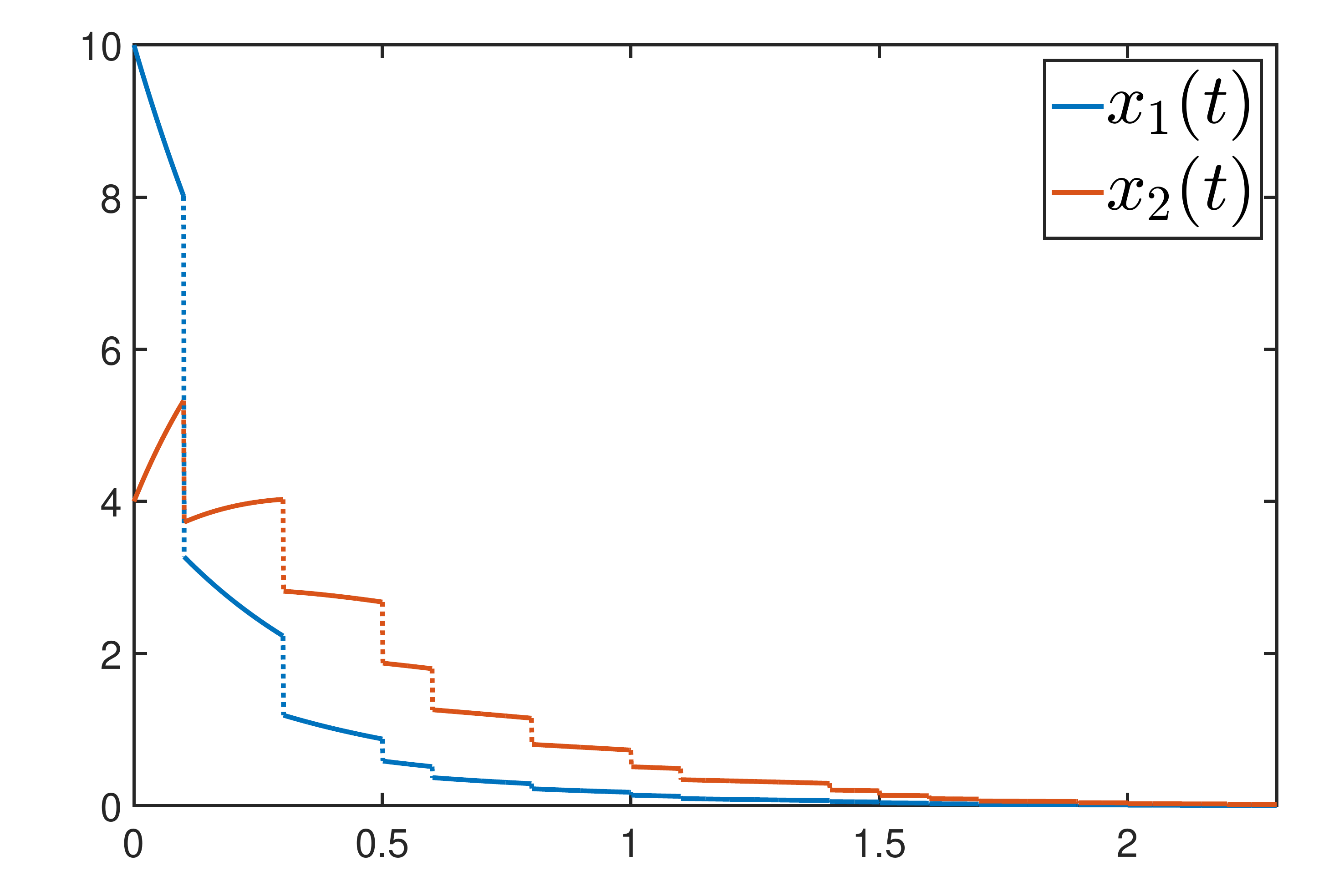}\hfill  \includegraphics[width=0.47\textwidth]{./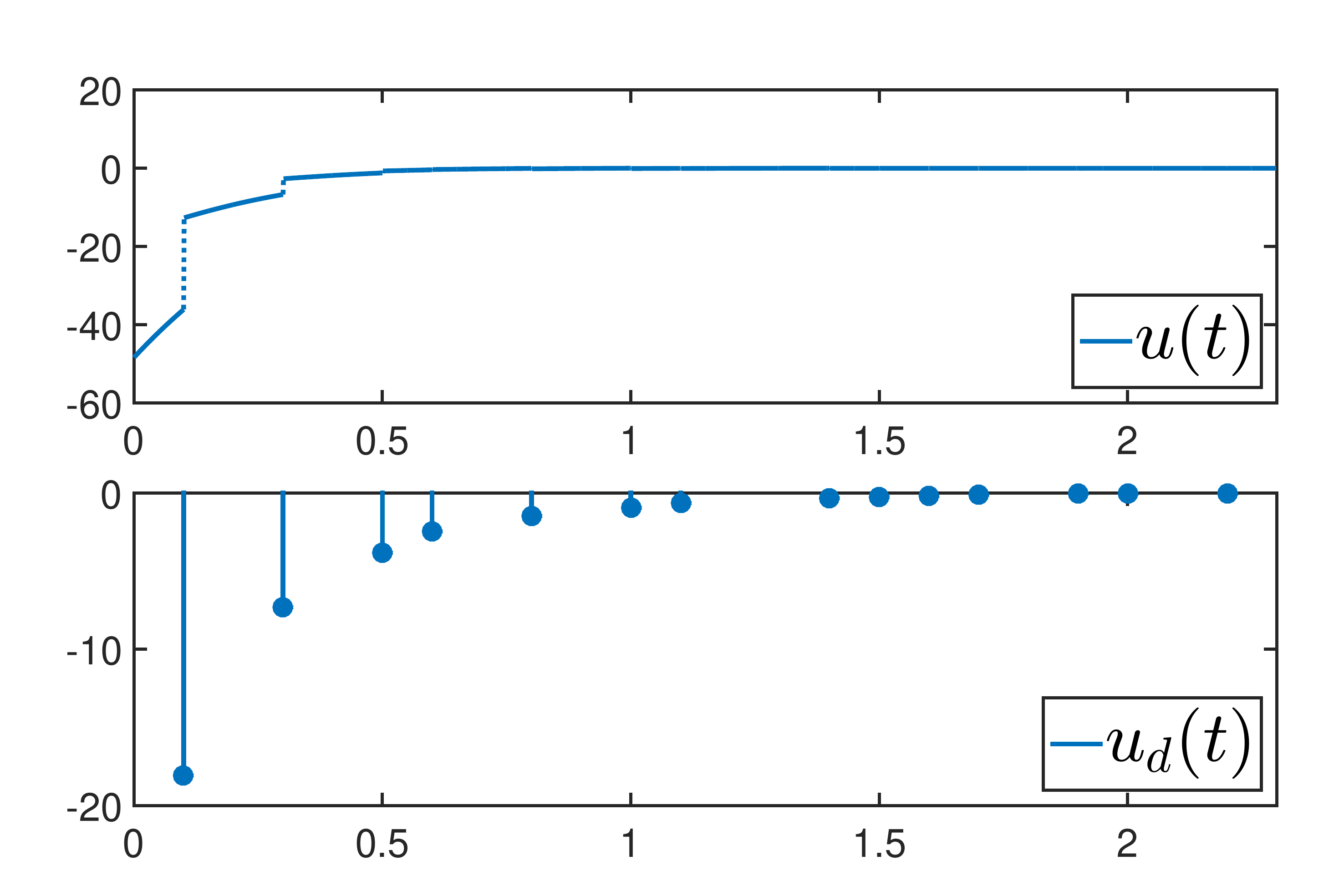}
  \caption{Evolution of the states (left) of closed-loop system \eqref{eq:mainsystu}-\eqref{eq:stabz:minDT1}-\eqref{eq:sfcl2}-\eqref{eq:stabz:minDT1_cl} and the associated control inputs (right).}\label{fig:stabz_minDT}
\end{figure}}
\end{example}

\blue{\begin{example}[Stabilization under maximum dwell-time]
Let us consider the impulsive system \eqref{eq:mainsystu} with matrices
\begin{equation}\label{eq:stabz:maxDT1}
  A=\begin{bmatrix}
    3 & 2\\
    1 & 1
  \end{bmatrix},\ B_c=\begin{bmatrix}
    0\\0
  \end{bmatrix},\ J = \begin{bmatrix}
    2 & 1\\ 0 & 0.7
  \end{bmatrix},\ B_d=\begin{bmatrix}
    1\\
    0
  \end{bmatrix}.
\end{equation}
When $u\equiv0$ and $u_d\equiv0$, the above system is not stable under maximum dwell-time since the matrix $J$ is not Schur stable. Hence, we propose to find a control law of the form \eqref{eq:sfcl1} that makes the closed-loop system positive and asymptotically stable with maximum dwell-time $\bar T=0.1$. We apply the SOS method of Section \ref{sec:SOS} to the conditions of Theorem \ref{th:maxDT_stabz}, (b), with polynomial degree $d_s=1$ and we get the following controller matrices
\begin{equation}\label{eq:stabz:maxDT1_cl}
  \begin{array}{rcl}
  K_d&=&\begin{bmatrix}
    -1.7080 &   -0.7511
  \end{bmatrix},\vspace{2mm}\\
K_c(\tau)&=&\begin{bmatrix}
    \dfrac{-2.2491 \tau^2+   1.4879 \tau  -0.8083}{ 0.6314\tau^2   -0.6576 \tau+   0.5351} &    \dfrac{-0.7639\tau^2 +   2.4375\tau   -0.9827}{ 0.7345 \tau^2  -1.5943\tau+    0.9738}
  \end{bmatrix}.
  \end{array}
\end{equation}
The design is validated based on a simulation where randomly generated impulse times satisfying the minimum dwell-time condition are considered. The obtained results are depicted in Figure~\ref{fig:stabz_maxDT} where we can see the controller efficiently stabilizes the system and drives the state of the closed-loop system to zero.
\begin{figure}[H]
  \centering
  \includegraphics[width=0.45\textwidth]{./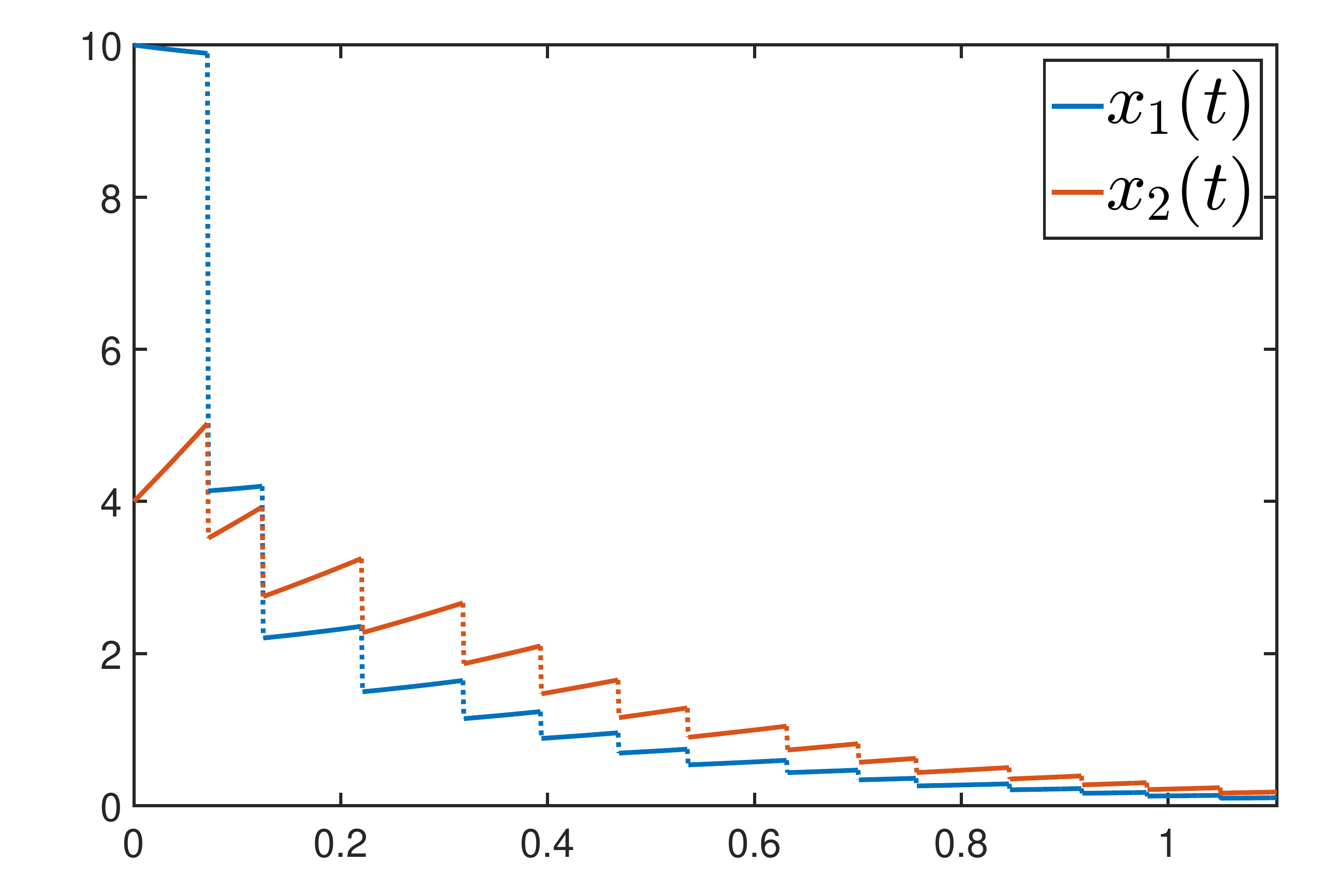}\hfill  \includegraphics[width=0.47\textwidth]{./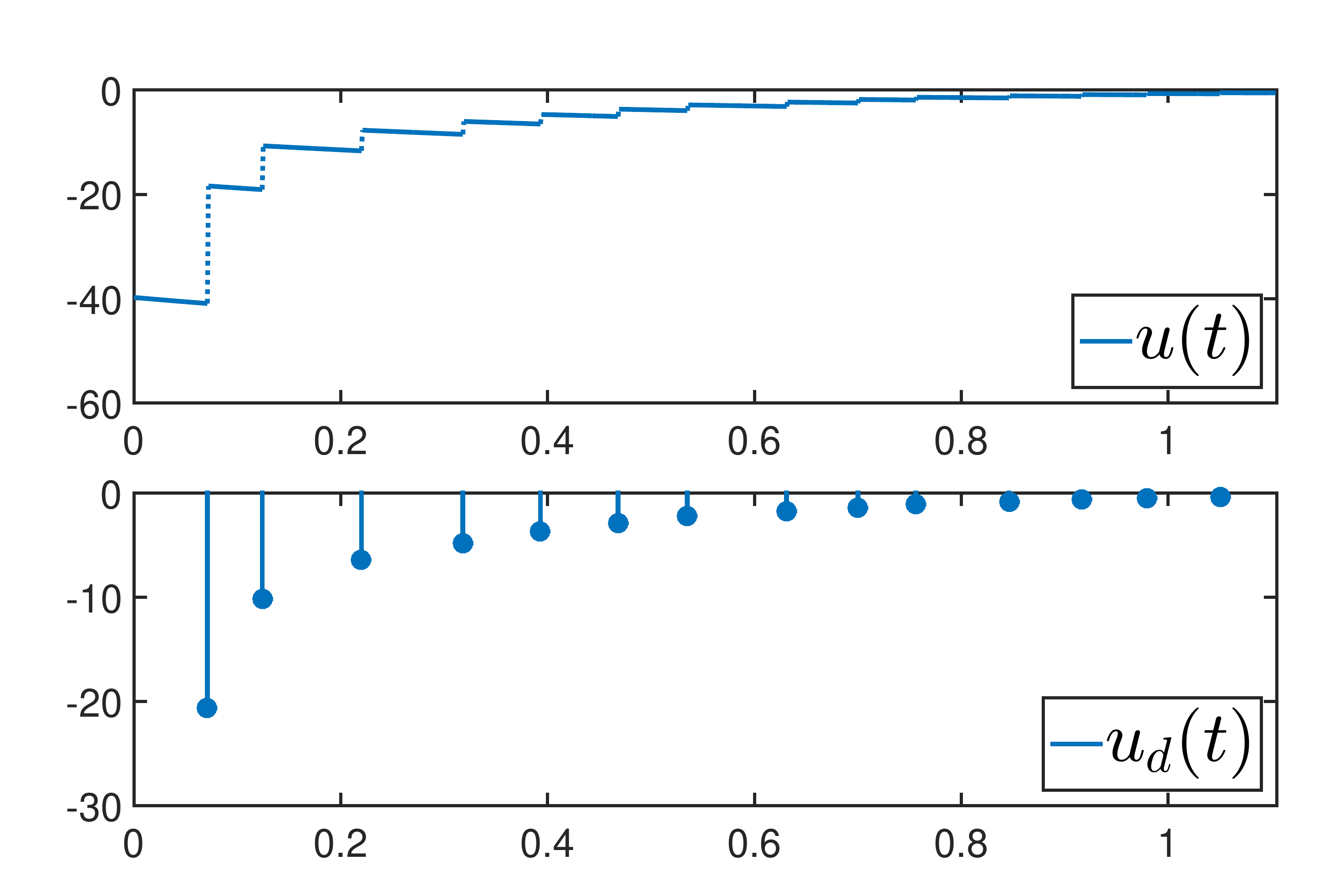}
  \caption{Evolution of the states (left) of closed-loop system \eqref{eq:mainsystu}-\eqref{eq:stabz:maxDT1}-\eqref{eq:sfcl1}-\eqref{eq:stabz:maxDT1_cl} and the associated control inputs (right).}\label{fig:stabz_maxDT}
\end{figure}
\end{example}}

\begin{example}[Stabilization under range dwell-time]
Let us consider the impulsive system \eqref{eq:mainsystu} with matrices
\begin{equation}\label{eq:stabz:rangeDT1_cl}
  A=\begin{bmatrix}
    1 & 1\\
    0 & 1
  \end{bmatrix},\ B_c=\begin{bmatrix}
    1\\0
  \end{bmatrix},\ J = \begin{bmatrix}
    1.2 & 0\\ 1 & 0.1
  \end{bmatrix},\ B_d=\begin{bmatrix}
    0\\
    1
  \end{bmatrix}.
\end{equation}
This system is unstable for any impulse sequence $\{t_k\}_{k\in\mathbb{N}}$ and hence the goal is to find a control law of the form \eqref{eq:sfcl3} that makes the closed-loop system positive and asymptotically stable with range dwell-time $(T_{min},T_{max})=(0.1,0.3)$. We apply the SOS method of Section \ref{sec:SOS} to the conditions of Theorem \ref{th:rangeDT_stabz}, (b), with polynomial degree $d_s=1$ and we get the following controller matrices
\begin{equation}\label{eq:stabz:rangeDT1_cl}
\begin{array}{rcl}
  K_d&=&\begin{bmatrix}
     0.1693   & 0.1006
  \end{bmatrix},\vspace{2mm}\\
K_c(\tau)&=&\begin{bmatrix}
    \dfrac{ 1.6045\tau^2+    0.5235\tau   -0.7268}{-0.1609\tau^2   -0.5823\tau+    0.1522} &    \dfrac{-0.5299\tau^2+    0.9808\tau+    0.1312}{0.9635\tau^2   -1.2439\tau+    0.5688}
  \end{bmatrix}.
\end{array}
\end{equation}
To validate the design, we perform a simulation with randomly generated impulse times satisfying the minimum dwell-time condition and we obtain the results depicted in Figure~\ref{fig:stabz_rangeDT} where we can see the controller efficiently stabilizes the system and drives the state of the closed-loop system to zero.

\begin{figure}[H]
  \centering
  \includegraphics[width=0.45\textwidth]{./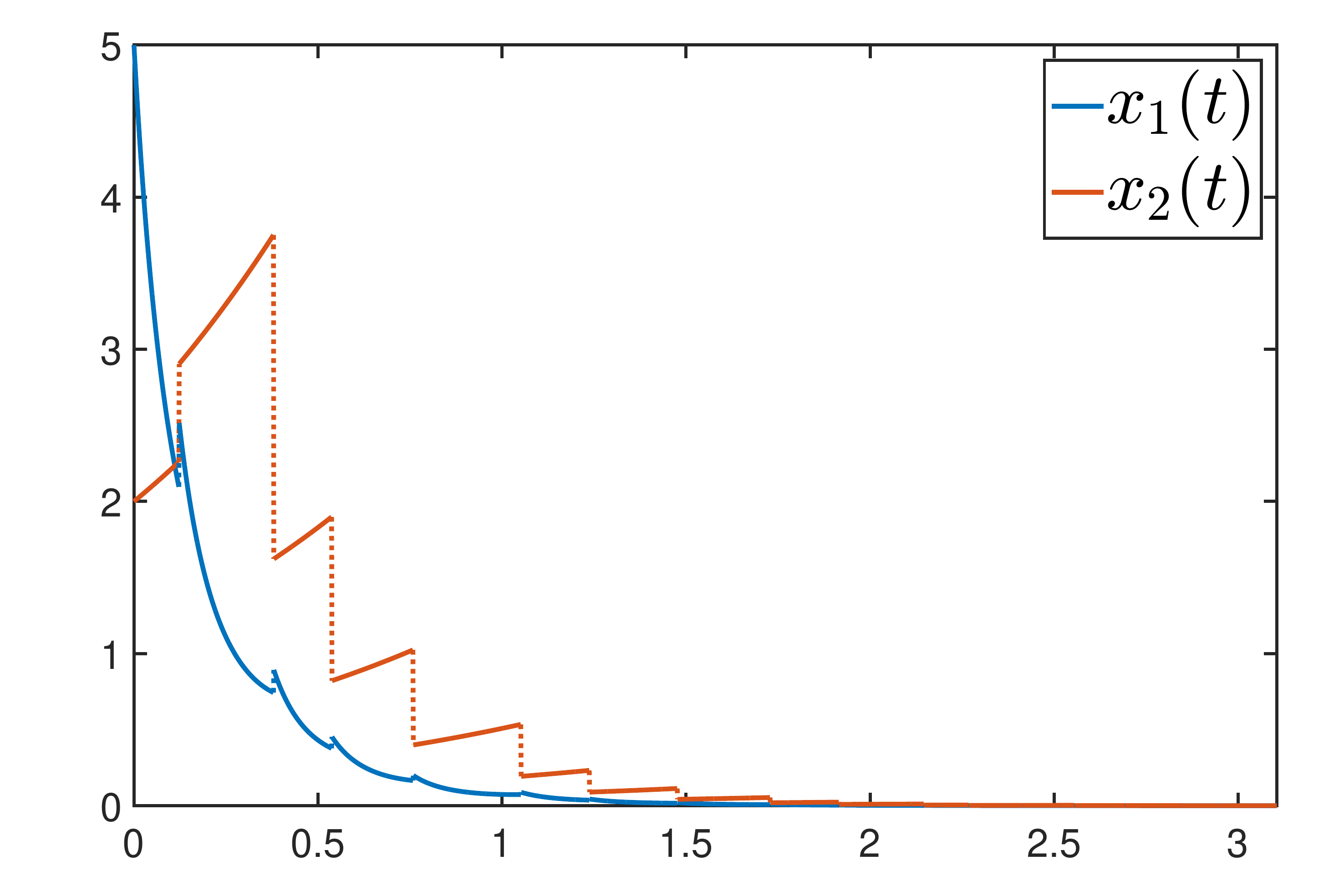}\hfill  \includegraphics[width=0.47\textwidth]{./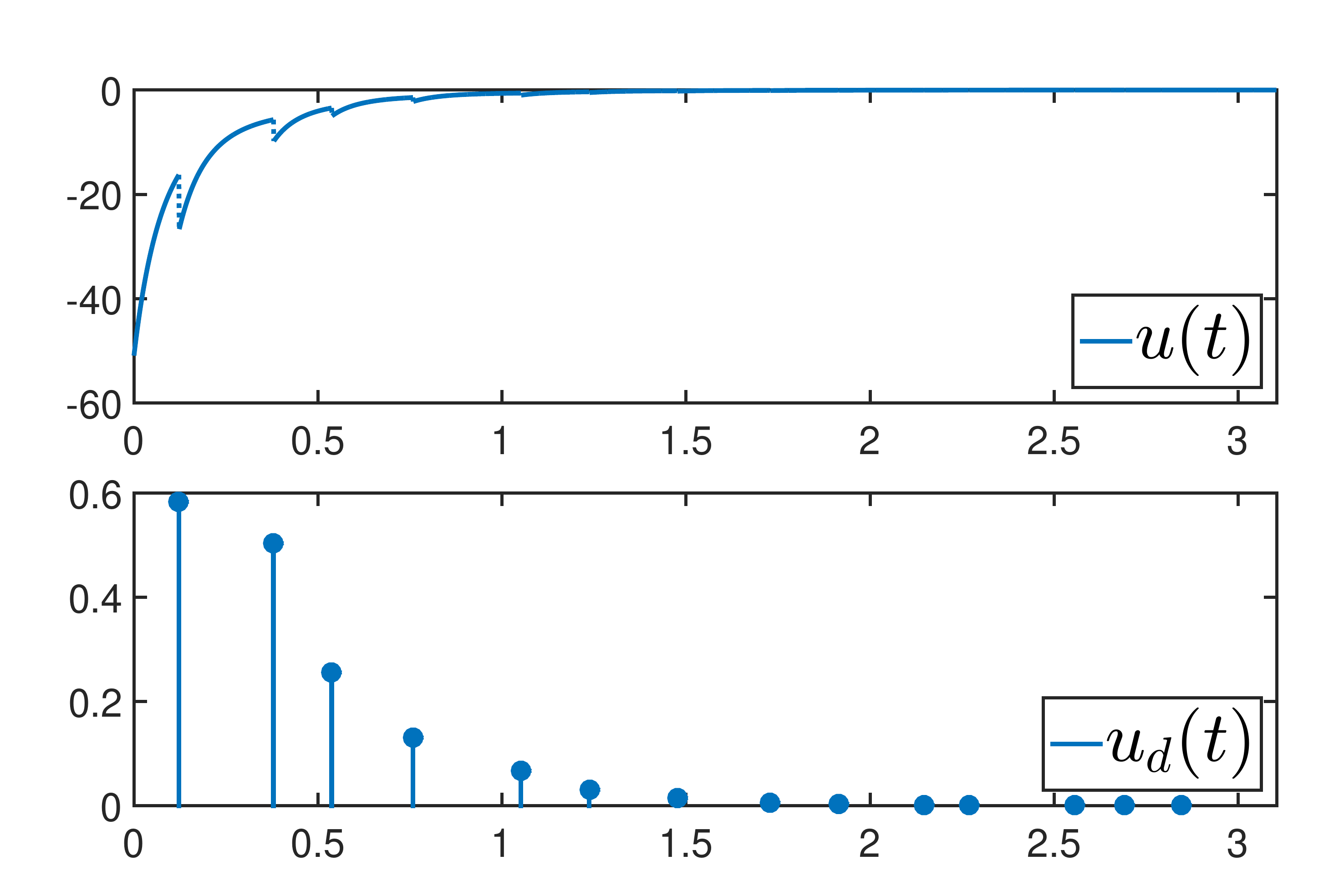}
  \caption{Evolution of the states (left) of closed-loop system \eqref{eq:mainsystu}-\eqref{eq:stabz:rangeDT1_cl}-\eqref{eq:sfcl3}-\eqref{eq:stabz:rangeDT1_cl} and the associated control inputs (right).}\label{fig:stabz_rangeDT}
\end{figure}
\end{example}

\section{Application to linear positive switched systems}\label{sec:switched}

The objective of this section is to derive several stability and stabilization conditions for linear positive switched systems lying in the same spirit as those obtained for impulsive systems. This is performed by exploiting the possibility for representing any switched system as an impulsive system, a procedure that will be described in Section \ref{sec:refo}. Using this reformulation, the results obtained in the previous sections are applied in order to derive stability and stabilization conditions linear positive switched systems under arbitrary switching (Section \ref{sec:SW:arbitrary}), minimum dwell-time switching (Section \ref{sec:SW:minDT}) and (mode-dependent) range dwell-time switching (Section \ref{sec:SW:range}). Some existing stability conditions are retrieved (e.g. minimum dwell-time) whereas the others seem to be novel. The stabilization conditions are also all novel and have not been reported anywhere before. Several illustrative examples are finally treated in Section \ref{sec:SW:examples}.

\subsection{Switched systems as impulsive systems}\label{sec:refo}

Let us consider the linear switched system
\begin{equation}\label{eq:switched}
\begin{array}{lcl}
    \dot{x}(t)&=&A_{\sigma(t)}x(t)+B_{\sigma(t)}u(t),\\
     x(0)&=&x_0
\end{array}
\end{equation}
where $x,x_0\in\mathbb{R}^n$ and $u\in\mathbb{R}^m$ are the state of the system, the initial condition and the control input. The signal $\sigma:\mathbb{R}_{\ge0}\mapsto\{1,\ldots,N\}$ is a left-continuous switching signal that changes values at the time instants $\{t_k\}_{k\in\mathbb{N}}$,  where this sequence obeys the same assumptions as the sequence considered for the system \eqref{eq:mainsyst}. This switched system can be reformulated as an impulsive system (actually a reset system) of the form
\begin{equation}\label{eq:switched_imp}
\begin{array}{rcl}
    \dot{\bar x}(t)&=&\bar A\bar x(t)+\bar B\bar u(t),\ t\ne t_k\\
    \bar x(t^+)&=&J_{ij}\bar x(t),\ t=t_k,\ i,j=1,\ldots,N,\ i\ne j
\end{array}
\end{equation}
with $\bar x\in\mathbb{R}^{Nn}$, $\bar u\in\mathbb{R}^{Nm}$, $\bar A = \diag(A_1,\ldots,A_N)$, $\bar B=\diag(B_1,\ldots,B_N)$ and $J_{ij}=e_ie_j^\T\otimes I_n$ where  $\{e_i\}_{i=1}^N$ is the standard basis for $\mathbb{R}^N$. \bluee{In this regard, we can clearly see that the set of impulsive systems with multiple jump maps contains the set of switched systems. Note that these sets are not equal as switched systems cannot implement state discontinuities. As a final remark, it is important to mention that the switched system \eqref{eq:switched} with $u\equiv0$ is positive if and only if all the matrices $A_i$, $i=1,\ldots,N$ are Metzler; see e.g. \cite{Gurvits:07,Mason:07}.}

\subsection{Stability and stabilization under arbitrary switching}\label{sec:SW:arbitrary}

\subsubsection{Stability under arbitrary switching}

Interestingly, we can recover from Theorem \ref{th:arbDT} the well-known conditions for stability under arbitrary switching of linear positive systems that can be found, for instance, in \cite{Mason:07,Mason:07b,Knorn:09,Fornasini:10}. This is stated in the following result:
\begin{corollary}\label{cor:arbDT}
  Let the matrices $A_1,\ldots,A_N$ be Metzler. Then, the following statements are equivalent:
  \begin{enumerate}[(a)]
    \item There exists a vector $\bar\lambda\in\mathbb{R}^{Nn}_{>0}$ such that the conditions
    \begin{equation}
      \bar\lambda^\T \bar A<0\ \textnormal{and}\ \bar\lambda^\T(J_{ij}-I_{nN})\le0,\ i,j=1,\ldots,N,\ i\ne j,
    \end{equation}
    hold.
    \item There exists a vector $\lambda\in\mathbb{R}^{n}_{>0}$ such that the conditions
    \begin{equation}\label{eq:arbDT22}
      \lambda^\T A_i<0,\ i=1,\ldots,N,
    \end{equation}
    hold.
  \end{enumerate}
  Moreover, when one of the above equivalent statements holds, then the linear positive switched system \eqref{eq:switched} with $u\equiv0$ is asymptotically stable under arbitrary switching.
\end{corollary}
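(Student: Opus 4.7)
The plan is to exploit the impulsive-system reformulation from Section \ref{sec:refo}, which recasts the switched system \eqref{eq:switched} with $u\equiv0$ as the impulsive system \eqref{eq:switched_imp} driven by $\bar A=\diag(A_1,\ldots,A_N)$ together with the family of jump maps $\{J_{ij}\}_{i\ne j}$, $J_{ij}=e_ie_j^\T\otimes I_n$. Since the continuous flow persists uninterrupted between successive switching instants, I would invoke the persistent-flowing variant of Theorem \ref{th:arbDT} stated in Remark \ref{rem:arbDT1_flow}, extended to a finite collection of jump maps by requiring the discrete Lyapunov decrease inequality for every admissible $J_{ij}$. This immediately yields the asymptotic stability of \eqref{eq:switched} under arbitrary switching as soon as statement (a) holds.

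For the equivalence (a) $\Leftrightarrow$ (b), my first step is to decompose $\bar\lambda=\col_k(\lambda_k)$ with $\lambda_k\in\mathbb{R}^n_{>0}$. The block-diagonality of $\bar A$ immediately identifies $\bar\lambda^\T\bar A<0$ with $\lambda_k^\T A_k<0$ for every $k$. The second step is to unpack the Kronecker structure of $J_{ij}$: a short computation shows that $\bar\lambda^\T J_{ij}$ has $\lambda_i^\T$ in block $j$ and zeros elsewhere, so that $\bar\lambda^\T(J_{ij}-I_{nN})\le0$ reduces block-wise to the constraint $\lambda_i\le\lambda_j$ (componentwise) together with the automatic inequalities $-\lambda_l\le0$ for $l\ne j$. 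Requiring this for every ordered pair $i\ne j$ and swapping the two indices forces $\lambda_i=\lambda_j$, so all the blocks of $\bar\lambda$ coincide with a common $\lambda\in\mathbb{R}^n_{>0}$; this is exactly condition \eqref{eq:arbDT22}. The converse direction is immediate by taking $\bar\lambda=\mathds{1}_N\otimes\lambda$.

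The argument is largely mechanical. The one non-trivial point, and the place where I would allocate the most care, is the symmetrization step that turns the family of one-sided inequalities $\lambda_i\le\lambda_j$ into the equalities $\lambda_i=\lambda_j$; the perhaps surprising implication is that even though (a) a priori allows a separate ``weight'' $\lambda_k$ per mode, the jump-map inequalities force a \emph{common} linear copositive Lyapunov function, thereby collapsing (a) to the classical condition \eqref{eq:arbDT22} of \cite{Mason:07,Fornasini:10}.
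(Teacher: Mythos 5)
Your proposal is correct and follows essentially the same route as the paper: invoke Theorem \ref{th:arbDT} together with the persistent-flowing relaxation of Remark \ref{rem:arbDT1_flow}, decompose $\bar\lambda$ into its $N$ blocks, observe that the block-diagonal flow condition gives $\lambda_k^\T A_k<0$, and show that the jump inequalities over all ordered pairs $i\ne j$ force all blocks to coincide. Your explicit Kronecker computation $\bar\lambda^\T J_{ij}=e_j^\T\otimes\lambda_i^\T$ and the symmetrization step $\lambda_i\le\lambda_j$, $\lambda_j\le\lambda_i$ $\Rightarrow$ $\lambda_i=\lambda_j$ merely spell out what the paper asserts in one line.
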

\begin{proof}
  \blue{We use here Theorem \ref{th:arbDT} and Remark \ref{rem:arbDT1_flow} on persistent flowing. These conditions are exactly those in \eqref{th:arbDT}.} Decompose $\textstyle\bar\lambda=\col_i(\bar\lambda_i)$ where $\bar\lambda_i\in\mathbb{R}^n_{>0}$, $i=1,\ldots,N$. Then, the jump conditions hold if only if $\bar\lambda_i=\bar\lambda_j$ for all $i,j=1,\ldots,N$. Letting $\bar\lambda_i=\lambda$, $i=1,\ldots,N$, and substituting then this value in the flow condition gives \eqref{eq:arbDT22}. The proof is complete.
\end{proof}

The dual stability conditions \cite{Blanchini:15} can be retrieved using Remark \ref{rem:arbDT2}:
\begin{corollary}\label{cor:arbDT2}
  Let the matrices $A_1,\ldots,A_N$ be Metzler. Then, the following statements are equivalent:
  \begin{enumerate}[(a)]
    \item There exists a vector $\bar\lambda\in\mathbb{R}^{Nn}_{>0}$ such that the conditions
    \begin{equation}
      \bar A\bar\lambda<0\ \textnormal{and}\ (J_{ij}-I_{nN})\bar\lambda\le0,\ i,j=1,\ldots,N,\ i\ne j,
    \end{equation}
    hold.
    \item There exists a vector $\lambda\in\mathbb{R}^{n}_{>0}$ such that the conditions
    \begin{equation}\label{eq:dksmldkdskd}
       A_i\lambda<0,\ i=1,\ldots,N,
    \end{equation}
    hold.
  \end{enumerate}
    Moreover, when one of the above equivalent statements holds, then the linear positive switched system \eqref{eq:switched} with $u\equiv0$ is asymptotically stable under arbitrary switching.
\end{corollary}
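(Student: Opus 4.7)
The plan is to mirror the proof of Corollary \ref{cor:arbDT} but replacing Theorem \ref{th:arbDT} / Remark \ref{rem:arbDT1_flow} with their dual counterparts stated in Remark \ref{rem:dual_arbDT} / Remark \ref{rem:arbDT2}. I would first apply the dual arbitrary-dwell-time stability conditions (in the persistent-flowing form, since the switched system necessarily keeps flowing between switches) to the impulsive reformulation \eqref{eq:switched_imp} of the switched system. This immediately produces the statement (a) as a sufficient condition for asymptotic stability, so it only remains to show the equivalence (a) $\Leftrightarrow$ (b).

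To prove (a) $\Rightarrow$ (b), I would decompose $\bar\lambda = \col_i(\bar\lambda_i)$ with $\bar\lambda_i\in\mathbb{R}^n_{>0}$ and exploit the structure $J_{ij} = e_ie_j^\T\otimes I_n$. In block form, $J_{ij}\bar\lambda$ has $\bar\lambda_j$ in the $i$-th block and zeros elsewhere, so the block-$i$ entry of $(J_{ij}-I_{nN})\bar\lambda$ equals $\bar\lambda_j-\bar\lambda_i$, while the block-$k$ entry for $k\ne i$ equals $-\bar\lambda_k\le0$ (automatic by positivity). Requiring $(J_{ij}-I_{nN})\bar\lambda\le0$ for every pair $i\ne j$ therefore forces $\bar\lambda_j\le\bar\lambda_i$, and swapping $i$ and $j$ gives $\bar\lambda_i = \bar\lambda_j$ for all $i,j$. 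Denoting this common value by $\lambda$, the flow condition $\bar A\bar\lambda<0$ decouples, by the block-diagonal structure $\bar A=\diag(A_1,\ldots,A_N)$, into $A_i\lambda<0$ for each $i$, which is \eqref{eq:dksmldkdskd}.

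The converse (b) $\Rightarrow$ (a) is obtained by taking $\bar\lambda := \mathds{1}_N\otimes\lambda$; the block-diagonal structure of $\bar A$ then yields $\bar A\bar\lambda<0$, and each $(J_{ij}-I_{nN})\bar\lambda$ has block-$i$ entry $\lambda-\lambda=0$ and block-$k$ entries $-\lambda\le0$, so the jump inequality holds (with equality in block $i$, consistent with the persistent-flowing relaxation).

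There is no real obstacle here since the argument is essentially structural: the only subtlety is to use the persistent-flowing form of the dual condition (\emph{non-strict} inequality at jumps), as in Remark \ref{rem:arbDT1_flow}, which is what permits equality $\bar\lambda_i=\bar\lambda_j$ to be admissible. This is also what gives the ``$\le0$'' in the jump inequality of statement (a) rather than a strict inequality. Everything else reduces to Kronecker-product bookkeeping.
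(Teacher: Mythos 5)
Your proposal is correct and follows essentially the same route as the paper: the paper gives no separate proof for this corollary, relying on the dual conditions of Remark \ref{rem:arbDT2} together with the persistent-flowing relaxation and the very same decomposition $\bar\lambda=\col_i(\bar\lambda_i)$ used in the proof of Corollary \ref{cor:arbDT}, where the jump conditions force $\bar\lambda_i=\bar\lambda_j$ and the block-diagonal flow condition decouples into $A_i\lambda<0$. You merely spell out the Kronecker-product bookkeeping that the paper leaves implicit.
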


\bluee{\begin{remark}
  It is interesting to remark that the stability condition of statement (b) can be retrieved by using the polyhedral Lyapunov function  $V(x)=\max_{i=1}^n\{\lambda_i^{-1}x_i\}$; see e.g. \cite{Blanchini:07,Blanchini:08,Blanchini:10,Blanchini:15} for more details.
\end{remark}}

\subsubsection{Stabilization under arbitrary switching}

\bluee{We have the following stabilization result under arbitrary switching can be obtained from Corollary \ref{cor:arbDT}:}
\begin{corollary}
  The following statements hold:
  \begin{enumerate}[(a)]
    \item There exists a control law of the form $u(t)=K_{\sigma(t)}x(t)$ such that the controlled system \eqref{eq:switched} is positive and asymptotically stable under arbitrary switching if there exist matrices $X\in\mathbb{D}_{\succ0}^n$, $U_i\in\mathbb{R}^{m\times n}$, $i=1,\ldots,N$, and a scalar $\alpha>0$ such that the conditions
        \begin{equation}\label{eq:stabZswarb1}
          A_iX+B_iU_i+\alpha I_n\ge0,\ \textnormal{and}\ [A_iX+B_iU_i]\mathds{1}_n<0
        \end{equation}
        hold for all $i=1,\ldots,N$. Moreover, in such a case, suitable controller gains can be obtained using the expression $K_i=U_iX^{-1}$.
        \item There exists a control law of the form $u(t)=Kx(t)$ such that the controlled system \eqref{eq:switched} is positive and asymptotically stable under arbitrary switching if there exist matrices $X\in\mathbb{D}_{\succ0}^n$, $U\in\mathbb{R}^{m\times n}$ and a scalar $\alpha>0$ such that the conditions
        \begin{equation}\label{eq:stabZswarb2}
          A_iX+B_iU+\alpha I_n\ge0,\ \textnormal{and}\ [A_iX+B_iU]\mathds{1}_n<0
        \end{equation}
        hold for all $i=1,\ldots,N$.  Moreover, in such a case, suitable controller gains can be obtained using the expression $K=UX^{-1}$.
  \end{enumerate}
\end{corollary}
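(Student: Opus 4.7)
The plan is to deduce both parts from Corollary \ref{cor:arbDT2} by a congruence-type change of variables. Since the closed-loop matrices are $A_i+B_iK_i$ (part (a)) or $A_i+B_iK$ (part (b)), and the design variables $U_i$ (resp.\ $U$) are meant to encode the products $K_iX$ (resp.\ $KX$), I would first check that the two conditions in \eqref{eq:stabZswarb1} together precisely correspond, after substitution, to (i) the Metzler property of every $A_i+B_iK_i$ and (ii) the existence of a common positive vector $\lambda$ verifying the hypothesis \eqref{eq:dksmldkdskd} of Corollary \ref{cor:arbDT2} applied to the closed-loop matrices.

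For the Metzler property, I would set $M_i:=A_i+B_iK_i$ with $K_i:=U_iX^{-1}$, so that $M_iX=A_iX+B_iU_i$. Because $X\in\mathbb{D}^n_{\succ0}$ is diagonal and positive, right-multiplication by $X$ preserves the sign pattern of the off-diagonal entries, that is, $(M_iX)_{jk}=(M_i)_{jk}X_{kk}$ for $j\ne k$. The inequality $A_iX+B_iU_i+\alpha I_n\ge0$ only constrains the diagonal via the $\alpha I_n$ shift, but imposes nonnegativity on every off-diagonal entry of $M_iX$, and hence of $M_i$. This yields exactly the Metzler property required for positivity of the closed-loop system.

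For the stability part, I would introduce $\lambda:=X\mathds{1}_n\in\mathbb{R}^n_{>0}$ (positivity being guaranteed by $X\in\mathbb{D}^n_{\succ0}$). The second inequality in \eqref{eq:stabZswarb1} then reads
\begin{equation}
(A_i+B_iK_i)\lambda=(A_iX+B_iU_i)\mathds{1}_n<0,\qquad i=1,\ldots,N,
\end{equation}
which is precisely condition \eqref{eq:dksmldkdskd} of Corollary \ref{cor:arbDT2} applied to the closed-loop matrices $A_i+B_iK_i$. Invoking that corollary delivers asymptotic stability of the closed-loop system under arbitrary switching, completing the proof of (a).

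Part (b) follows mutatis mutandis by performing the same substitution with the common gain $K:=UX^{-1}$, since the structural role of $X$ and the argument on Metzler-ness versus positivity of $\lambda$ are identical and independent of whether $U_i$ varies with $i$ or not. I do not expect any genuine obstacle: the only subtlety worth flagging is the necessity of $X$ being diagonal (rather than merely nonnegative or symmetric positive definite), which is essential both for $\lambda=X\mathds{1}_n>0$ and for the preservation of the Metzler structure under $X$-scaling; any generalization beyond diagonal $X$ would break the latter step and demand a fundamentally different argument.
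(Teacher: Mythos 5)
Your proposal is correct and follows essentially the same route as the paper: the change of variables $\lambda=X\mathds{1}_n$, $U_i=K_iX$ (resp.\ $U=KX$), the first condition in \eqref{eq:stabZswarb1} guaranteeing the Metzler property of the closed loop, and the second reducing to the dual arbitrary-switching condition \eqref{eq:dksmldkdskd} of Corollary \ref{cor:arbDT2}. Your write-up is in fact more explicit than the paper's (notably on why diagonal positive $X$ preserves the off-diagonal sign pattern), but there is no substantive difference in the argument.
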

\begin{proof}
  The proof of statement (a) is based on the substitution of the matrix of the closed-loop system $A_i+B_iK_i$ into \eqref{eq:dksmldkdskd}. Defining $X\in\mathbb{D}_{\succ0}^n$ as $\lambda=:D\mathds{1}_n$, then the change of variables $U_i:=K_iX$ yields the second condition of \eqref{eq:dksmldkdskd}. The positivity of the closed-loop system is ensured by the first condition of \eqref{eq:dksmldkdskd}. The proof of statement (b) follows from the same lines and the change of variables $U:=KX$.
\end{proof}

\subsection{Stability and stabilization under minimum dwell-time switching}\label{sec:SW:minDT}

\subsubsection{Stability under minimum dwell-time switching}

\bluee{By applying now Theorem \ref{th:minDT2} to the system \eqref{eq:switched_imp}, we get the following result where statement (b) was previously obtained in \cite{Zappavigna:10b,Blanchini:10,Blanchini:15} and the other statements are the "positive systems" analogues of the results in \cite{Geromel:06b,Briat:14f}:}
\begin{corollary}[Minimum dwell-time]\label{th:minDT_switched}\label{cor:minDT_switched2}
   Let the matrices $A_1,\ldots,A_N$ be Metzler. Then, the linear positive switched system \eqref{eq:switched} with $u\equiv0$ is asymptotically stable under minimum dwell-time $\bar{T}$ if one of the following equivalent statements holds:
  \begin{enumerate}[(a)]
  \item There exists a vector $\lambda\in\mathbb{R}_{>0}^{Nn}$such that the inequalities
    \begin{equation}
      \lambda^\T \bar A<0
    \end{equation}
    and
    \begin{equation}
          \lambda^\T\left[J_{ij}e^{\bar A\bar{T}}-I_{nN}\right] <0,\ i,j=1,\ldots,N,\ i\ne j
    \end{equation}
    hold.
    \item There exist vectors $\lambda_i\in\mathbb{R}_{>0}^n$, $i=1,\ldots,N$, such that the inequalities
    \begin{equation}
      \lambda_i^\T A_i<0,\ i=1,\ldots,N
    \end{equation}
    and
    \begin{equation}
          \lambda_i^\T e^{A_j\bar{T}} -\lambda_j^\T <0,\ i,j=1,\ldots,N,\ i\ne j
    \end{equation}
    hold.
    \item There exist a matrix function $\zeta_i:[0,\bar{T}]\mapsto\mathbb{R}^n$, $\zeta_i(\bar T)\in\mathbb{R}_{>0}^n$, $i=1,\ldots,N$, and a scalar $\eps>0$ such that the inequalities
\begin{equation}
      \zeta_i(\bar T)^\T A_i <0,\ i=1,\ldots,N
    \end{equation}
  \begin{equation}
    \zeta_i(\tau)^\T A_i -\dot{\zeta}_i(\tau)^\T \le0,\ i=1,\ldots,N
  \end{equation}
  and
  \begin{equation}
    \zeta_j(\bar{T})^\T -\zeta_i(0)^\T +\eps \mathds{1}_n^\T \le0,\ i,j=1,\ldots,N,\ i\ne j
  \end{equation}
 hold for all $\tau\in[0,\bar{T}]$.
     \item There exist a matrix function $\xi_i:[0,\bar{T}]\mapsto\mathbb{R}^n$, $\xi_i(0)\in\mathbb{R}_{>0}^n$, $i=1,\ldots,N$, and a scalar $\eps>0$ such that the inequalities
\begin{equation}
      \xi_i(0)^\T A_i<0,\ i=1,\ldots,N
    \end{equation}
  \begin{equation}
    \xi_i(\tau)^\T A_i+\dot{\xi}_i(\tau)^\T \le0,\ i=1,\ldots,N
  \end{equation}
  and
  \begin{equation}
    \xi_j(0) -\xi_i(\bar{T})^\T +\eps \mathds{1}_n^\T \le0,\ i,j=1,\ldots,N,\ i\ne j
  \end{equation}
 hold for all $\tau\in[0,\bar{T}]$.\mendth
   \end{enumerate}
\end{corollary}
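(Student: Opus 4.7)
The plan is to apply Theorem~\ref{th:minDT2} to the impulsive reformulation \eqref{eq:switched_imp} with $u\equiv 0$. Since \eqref{eq:switched_imp} admits a family of jump matrices $\{J_{ij}\}_{i\ne j}$ rather than a single $J$, the jump inequalities from Theorem~\ref{th:minDT2} will be imposed once for every admissible $J_{ij}$. Statement (a) is then the direct transcription of Theorem~\ref{th:minDT2}, (c), to the pair $(\bar A,J_{ij})$, noting that $\bar A$ is Metzler (since each $A_i$ is Metzler) and every $J_{ij}$ is nonnegative.

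To obtain statement (b), I would decompose $\lambda=\col_k(\lambda_k)$ with $\lambda_k\in\mathbb{R}^n_{>0}$ and exploit the block-diagonal structure of $\bar A$ and $e^{\bar A\bar T}$ together with the rank-one structure $J_{ij}=e_ie_j^\T\otimes I_n$. The flow condition $\lambda^\T\bar A<0$ decouples componentwise into $\lambda_k^\T A_k<0$. A direct block computation shows that $J_{ij}e^{\bar A\bar T}$ has $e^{A_j\bar T}$ in its $(i,j)$-block and zeros elsewhere, so that $\lambda^\T[J_{ij}e^{\bar A\bar T}-I_{nN}]$ has $j$-th column block $\lambda_i^\T e^{A_j\bar T}-\lambda_j^\T$ and $l$-th column block $-\lambda_l^\T$ for $l\ne j$. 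Since $\lambda_l>0$, the blocks with $l\ne j$ are automatically strictly negative, and the only nontrivial requirements reduce to $\lambda_i^\T e^{A_j\bar T}-\lambda_j^\T<0$ for all $i\ne j$, which is statement (b).

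For statements (c) and (d), I would apply the clock-dependent formulations (f) and (g) of Theorem~\ref{th:minDT2} to the same impulsive reformulation, setting $\zeta=\col_k(\zeta_k)$ and $\xi=\col_k(\xi_k)$. The flow-type inequalities decouple mode-by-mode just as above. The jump-type inequality $\zeta(\bar T)^\T J_{ij}-\zeta(0)^\T+\eps\mathds{1}_{nN}^\T\le 0$ splits by column blocks into $\zeta_i(\bar T)^\T-\zeta_j(0)^\T+\eps\mathds{1}_n^\T\le 0$ on the $j$-th block and $-\zeta_l(0)^\T+\eps\mathds{1}_n^\T\le 0$ on the remaining blocks; the latter is subsumed since the former, as $(i,j)$ ranges over all ordered pairs with $i\ne j$, forces $\zeta_j(0)\ge\zeta_i(\bar T)+\eps\mathds{1}_n>0$. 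A relabeling of indices brings this into the form stated in (c), and an analogous argument starting from (g) yields (d). Asymptotic stability of the switched system finally follows because its state is always recovered as the unique nonzero block of $\bar x$, and Theorem~\ref{th:minDT2} delivers asymptotic stability of that block.

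The main obstacle will be bookkeeping the double indexing: one must verify carefully that in the surviving jump condition $\lambda_i$ sits on the new-mode side and $\lambda_j$ on the old-mode side (and symmetrically for $\zeta$ and $\xi$), and that the ``extra'' inequalities on the inactive column blocks impose nothing beyond strict positivity of $\zeta_i(0)$ (respectively $\xi_i(\bar T)$), which is already guaranteed by the surviving inequality.
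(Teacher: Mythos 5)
Your proposal is correct and follows exactly the paper's route: the paper's own proof is the one-line instruction to decompose $\lambda=\col_i(\lambda_i)$ and evaluate the conditions of Theorem~\ref{th:minDT2} for every $J=J_{ij}$, and your block computation (the $(i,j)$-block structure of $J_{ij}e^{\bar A\bar T}$, the automatic satisfaction of the inactive column blocks, and the index relabeling) is precisely the bookkeeping that argument leaves implicit. No gaps.
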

\begin{proof}
  Decomposing $\lambda$ as $\lambda=\col(\lambda_1,\ldots,\lambda_N)$ and evaluating the conditions of Theorem \ref{th:minDT2} for all $J=J_{ij}$,  $i,j=1,\ldots,N,\ i\ne j$, leads to the result.
\end{proof}

\subsubsection{Stabilization under minimum dwell-time switching}

We consider here the following control law
\begin{equation}\label{eq:sfcl4}
    u_{\sigma(t_k)}(t_k+\tau)=\left\{\begin{array}{lcl}
    K_{\sigma(t_k)}(\tau)x(t_k+\tau)&&\text{if\ }\tau\in[0,\bar{T})\\
 K_{\sigma(t_k)}(\bar{T})x(t_k+\tau)&&\text{if\ }\tau\in[\bar{T},T_k).
  \end{array}\right.
\end{equation}
where $K_i:[0,\bar T]\mapsto\mathbb{R}^{m_c\times n}$, $i=1,\ldots,N$. \bluee{The above control law is analogous to the control law \eqref{eq:sfcl2} with the difference that it is now mode-dependent. The rationale for such a control law is explained in Section \ref{sec:impz:minimum}.} The following result is obtained by applying Theorem \ref{th:minDT_stabz2} to the closed-loop system \eqref{eq:switched_imp}-\eqref{eq:sfcl4} and can be seen as the "positive systems" version of a result in \cite{Briat:14f}:
\begin{corollary}[Stabilization under minimum dwell-Time]\label{th:sw_dtz2}\label{th:minDT_switched_2}
There exists a controller of the form \eqref{eq:sfcl4} such that the closed-loop system \eqref{eq:switched}-\eqref{eq:sfcl4} is positive and asymptotically stable under minimum dwell-time $\bar T$ if one of the following equivalent statements holds:
  \begin{enumerate}[(a)]
    \item There exist vectors $\lambda_i\in\mathbb{R}_{>0}^n$, $i=1,\ldots,N$, and matrix-valued functions $K_i:[0,\bar T]\mapsto\mathbb{R}^{m\times n}$, $i=1,\ldots,N$, such that the matrix $A+B_cK_c(\tau)$ is Metzler for all $\tau\in[0,\bar{T}]$ and such that the inequalities
    \begin{equation}\label{eq:minDTza1_2}
      (A_i+B_iK_i(\bar{T}))\lambda_i<0
    \end{equation}
    and
    \begin{equation}\label{eq:minDTza2_2}
     \Psi_i(\bar{T})\lambda_j-\lambda_i<0
    \end{equation}
    hold for all $i,j=1,\ldots,N$, $i\ne j$, where
    \begin{equation}
      \dfrac{d\Psi_i(s)}{ds}=\left(A_i+B_iK_i(s)\right)\Psi_i(s),\ \Psi_i(0)=I_n,\ s\ge0.
\end{equation}
 \item There exist matrix-valued functions ${X_i:[0,\bar{T}]\mapsto\mathbb{D}^n}$, $i=1,\ldots,N$, $X_i(\bar T)\in\mathbb{D}_{\succ0}^n$, $i=1,\ldots,N$, $U_i:[0,\bar{T}]\mapsto\mathbb{R}^{m_c\times n}$, $i=1,\ldots,N$, and scalars $\eps,\alpha>0$ such that the inequalities
         \begin{equation}
                  A_iX_i(\tau)+B_iU_i(\tau)+\alpha I\ge0,
         \end{equation}
          \begin{equation}
                  [A_iX_i(\bar T)+B_i(\bar T)U_i(\bar T)]\mathds{1}_n<0,
         \end{equation}
    \begin{equation}\label{eq:minCDTzc1_2}
      \left[-\dot{X}_i(\tau)+A_iX_i(\tau)+B_iU_i(\tau)\right]\mathds{1}_n<0
    \end{equation}
  and
  \begin{equation}\label{eq:minCDTzc3_2}
    \left[X_j(\bar T)-X_i(0)+\eps I\right]\mathds{1}_n\le0
  \end{equation}
 hold for all $\tau\in[0,\bar{T}]$. Moreover, in such a case, suitable controller gains can be computed using the relations  $K_i(\tau)=U_i(\tau)X_i(\tau)^{-1}$, $i=1,\ldots,N$.
   \end{enumerate}
\end{corollary}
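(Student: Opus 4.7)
The plan is to apply Theorem \ref{th:minDT_stabz2} to the impulsive reformulation \eqref{eq:switched_imp} of the switched system, exploiting the block-diagonal structure of $\bar A=\diag(A_i)$ and $\bar B=\diag(B_i)$, and then specialize the decision variables to be block-diagonal too. Concretely, for the closed-loop impulsive system \eqref{eq:switched_imp}-\eqref{eq:sfcl4} I would let $\bar X(\tau)=\diag(X_1(\tau),\ldots,X_N(\tau))\in\mathbb{D}^{nN}$ and $\bar U(\tau)=\diag(U_1(\tau),\ldots,U_N(\tau))$, which makes every product $\bar A\bar X+\bar B\bar U$ and $-\dot{\bar X}+\bar A\bar X+\bar B\bar U$ block-diagonal. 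The flow conditions of Theorem \ref{th:minDT_stabz2}, (b) then decouple across modes: the Metzler/positivity-preserving inequality, the endpoint condition $[A_iX_i(\bar T)+B_iU_i(\bar T)]\mathds{1}_n<0$, and the clock-dependent inequality \eqref{eq:minCDTzc1_2} all appear block-by-block.

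The real work is the jump condition \eqref{eq:minCDTzc3}, which must hold separately for every jump matrix $J_{ij}=e_ie_j^\T\otimes I_n$, $i\neq j$ (no discrete input, so $B_d=0$). Using $\bar X(\bar T)=\diag(X_i(\bar T))$, a direct computation shows that $J_{ij}\bar X(\bar T)$ has $X_j(\bar T)$ in block position $(i,j)$ and zero elsewhere. Multiplying by $\mathds{1}_{nN}$ yields a vector whose $i$-th block is $X_j(\bar T)\mathds{1}_n$ and whose other blocks vanish, while $\bar X(0)\mathds{1}_{nN}$ has $X_k(0)\mathds{1}_n$ in each block $k$. The $i$-th block of $[J_{ij}\bar X(\bar T)-\bar X(0)+\eps I_{nN}]\mathds{1}_{nN}\le0$ is therefore exactly $[X_j(\bar T)-X_i(0)+\eps I]\mathds{1}_n\le0$, which is \eqref{eq:minCDTzc3_2}, while the remaining blocks reduce to $[-X_k(0)+\eps I]\mathds{1}_n\le0$ for $k\neq i$; the latter can always be ensured by choosing $\eps$ small enough given that each $X_k(0)\in\mathbb{D}^n_{\succ0}$. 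The controller recovery $K_i(\tau)=U_i(\tau)X_i(\tau)^{-1}$ and the Metzler/non-negativity of the closed-loop matrices follow block-wise from the corresponding items in Theorem \ref{th:minDT_stabz2}.

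To obtain the equivalence with (a), I would repeat the change of variables used in the proof of Theorem \ref{th:minDT_stabz2}: set $\zeta_i(\tau)=X_i(\tau)\mathds{1}_n$ and $K_i(\tau)=U_i(\tau)X_i(\tau)^{-1}$ in the conditions of (b). The flow inequalities become $(A_i+B_iK_i(\bar T))\zeta_i(\bar T)<0$ and $-\dot\zeta_i(\tau)+(A_i+B_iK_i(\tau))\zeta_i(\tau)<0$, while the jump inequality becomes $\zeta_j(\bar T)-\zeta_i(0)+\eps\mathds{1}_n\le0$. Applying to this the same integration-then-substitution argument as in the proof of Theorem \ref{th:cstDT2} (implication $(e)\Rightarrow(c)$) with state-transition matrix $\Psi_i$ of the closed-loop $i$-mode dynamics yields precisely the inequalities \eqref{eq:minDTza1_2}-\eqref{eq:minDTza2_2} with $\lambda_i=\zeta_i(\bar T)$. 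The reverse direction is identical to the proof of (b)$\Rightarrow$(c) in Theorem \ref{th:cstDT2}, taking $\zeta_i^\ast(\tau)=e^{(A_i+B_iK_i(\bar T))^\T(\tau-\bar T)}\lambda_i$ on $[0,\bar T]$ after freezing $K_i$ at its terminal value.

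The main obstacle is bookkeeping: verifying that the block-diagonal ansatz does not lose generality for the reformulated impulsive system and tracking the indices in the expansion of $J_{ij}\bar X(\bar T)\mathds{1}_{nN}$ so that the extraneous block rows (those with $k\neq i$) only produce the trivial constraint $X_k(0)\mathds{1}_n\ge\eps\mathds{1}_n$. Once this is made explicit, the rest is a direct transcription of Theorem \ref{th:minDT_stabz2} mode-by-mode.
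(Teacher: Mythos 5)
Your proposal is correct and follows the paper's proof exactly: the paper likewise establishes the corollary by substituting the impulsive reformulation \eqref{eq:switched_imp} (with $J=J_{ij}$, $B_d=0$) into Theorem \ref{th:minDT_stabz2}, (b), and your block-diagonal bookkeeping for $\bar X$, $\bar U$ and the expansion of $J_{ij}\bar X(\bar T)\mathds{1}_{nN}$ just makes explicit what the paper leaves implicit. The only detail to fix is in your reverse direction (a)$\Rightarrow$(b): since the closed-loop flow matrix $A_i+B_iK_i(\tau)$ is time-varying, the candidate should be built from the transition matrix, e.g. $\zeta_i^\ast(\tau)=\Psi_i(\tau)\Psi_i(\bar T)^{-1}\lambda_i$, rather than from the exponential of the gain frozen at $\tau=\bar T$, which would not annihilate $-\dot{\zeta}_i^\ast(\tau)+(A_i+B_iK_i(\tau))\zeta_i^\ast(\tau)$.
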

\begin{proof}
  The proof readily follows from the substitution of the impulsive system formulation \eqref{eq:switched_imp} of the system \eqref{eq:switched} in the conditions of Theorem \ref{th:minDT_stabz2}, (b).
\end{proof}

\subsection{Stability and stabilization under range dwell-time switching}\label{sec:SW:range}

\subsubsection{Stability under range dwell-time switching}

The following result can be obtained by applying the conditions for the stability under range dwell-time described in Remark \ref{rem:swapped_range} to the system \eqref{eq:switched_imp} and can be seen as the "positive systems" version of a result in \cite{Briat:14f}:
\begin{corollary}\label{th:range_switched_2}
 Let the matrices $A_1,\ldots,A_N$ be Metzler. Then, the following statements are equivalent:
\begin{enumerate}[(a)]
  \item There exist vectors $\lambda_i\in\mathbb{R}^n_{>0}$ such that the conditions
  \begin{equation}
    \lambda_j^\T e^{A_i\theta_i}-\lambda_j^\T<0
  \end{equation}
 hold  for all $i,j=1,\ldots,N$, $i\ne j$ and $\theta_i\in[T^i_{min},T^i_{max}]$.
 \item There exist vector-valued functions $\zeta_i:[0,T^i_{max}]\mapsto\mathbb{R}^n$, $\zeta_i(0)\in\mathbb{R}_{>0}^n$, $i=1,\ldots,N$,  and a scalar $\eps>0$ such that the inequalities
  \begin{equation}
    \zeta_i(\tau_i)^\T A_i+\dot{\zeta}_i(\tau_i)^\T \le0,\ i=1,\ldots,N,
  \end{equation}
  and
  \begin{equation}
    \zeta_j(0)^\T-\zeta_i(\theta_i)^\T  +\eps \mathds{1}_n^\T \le0,\ i,j=1,\ldots,N,\ i\ne j
  \end{equation}
 hold for all $\tau_i\in[0,T^i_{max}]$ and all $\theta_i\in[T^i_{min},T^i_{max}]$.
\end{enumerate}
Moreover, when one of the above statements hold, the linear positive switched system \eqref{eq:switched} is asymptotically stable under range dwell-time $(T_{min},T_{max})$.
\end{corollary}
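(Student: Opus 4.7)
The plan is to derive Corollary \ref{th:range_switched_2} by specializing Theorem \ref{th:rangeDT}, in the form given by Remark \ref{rem:swapped_range}, to the impulsive reformulation \eqref{eq:switched_imp} of the switched system. The challenge, which makes this more than a routine substitution, is that \eqref{eq:switched_imp} features a combinatorial family of jump matrices $\{J_{ij}\}_{i\ne j}$ in place of a single reset matrix, and that the admissible dwell-time range depends on the currently active mode.

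My first step is to introduce a mode-dependent clock-dependent Lyapunov candidate $V(x,\sigma,\tau):=\zeta_\sigma(\tau)^\T x$, where $\tau$ is the time elapsed since the last switch, and to lift it to the impulsive setting by writing the lifted variable as $\xi(\tau)=\col_k(\zeta_k(\tau))$. Since $\bar A=\diag_k(A_k)$ is block diagonal, the flow inequality $\xi(\tau)^\T\bar A+\dot\xi(\tau)^\T\le 0$ of Remark \ref{rem:swapped_range} decouples blockwise into $\zeta_k(\tau)^\T A_k+\dot\zeta_k(\tau)^\T\le 0$ for each mode $k$, which is the first inequality of statement (b). Naturally, each mode's flow inequality is only required on its own interval $[0,T^k_{max}]$, consistently with the mode-dependent range.

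The second step is the jump analysis. For a switch from mode $i$ to mode $j$ after a dwell $\theta_i\in[T^i_{min},T^i_{max}]$, the relevant reset matrix is $J_{ji}=e_je_i^\T\otimes I_n$. Using $J_{ji}^\T = e_ie_j^\T\otimes I_n$, the row vector $\xi(0)^\T J_{ji}$ equals $\zeta_j(0)^\T$ in block position $i$ and vanishes elsewhere. Extracting the $i$-th block of the lifted jump inequality $\xi(0)^\T J_{ji}-\xi(\theta_i)^\T+\eps\mathds{1}_{nN}^\T\le 0$ then delivers exactly $\zeta_j(0)^\T-\zeta_i(\theta_i)^\T+\eps\mathds{1}_n^\T\le 0$, which is the second inequality of statement (b). The remaining blocks only read $-\zeta_k(\theta_i)^\T+\eps\mathds{1}_n^\T\le 0$ for $k\ne i$; under the constraint $\zeta_k(0)\in\mathbb{R}^n_{>0}$ together with a trivial rescaling in each mode, these are non-binding and can be discarded from the statement.

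The third step is the equivalence (a) $\Leftrightarrow$ (b). For (b) $\Rightarrow$ (a), I set $\lambda_i:=\zeta_i(0)$ and exploit that, for a Metzler $A_i$, $e^{A_i\tau}\ge 0$ implies that $\tau\mapsto \zeta_i(\tau)^\T e^{A_i\tau}$ is componentwise non-increasing on $[0,T^i_{max}]$, whence $\zeta_i(\theta_i)^\T e^{A_i\theta_i}\le\zeta_i(0)^\T$. Combining this with the jump inequality and right-multiplying by $e^{A_i\theta_i}\ge 0$ yields $\lambda_j^\T e^{A_i\theta_i}-\lambda_i^\T<0$, which is the condition of statement (a). For the converse, I would mimic the construction used in the proof of Theorem \ref{th:cstDT2}: pick a state-transition-matrix ansatz $\zeta_i^\star(\tau)$ that saturates the flow inequality and transforms condition (a) into the lifted jump inequality of (b) for some sufficiently small $\eps>0$; the nonnegativity of $e^{A_i\tau}$ for Metzler $A_i$ is what allows the direction of the componentwise inequality to be preserved when moving between the two forms. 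The main obstacle in this whole argument is the combinatorial bookkeeping induced by the multiplicity of jump maps together with the mode-dependence of the dwell-time ranges, since the single-$J$/single-range statement of Remark \ref{rem:swapped_range} has to be applied to every switching pair with its own interval; once the per-mode decomposition is carried out and the non-diagonal block conditions are identified as inert, the remaining algebra follows the same pattern as in the proofs of Corollary \ref{th:minDT_switched} and Corollary \ref{th:minDT_switched_2}.
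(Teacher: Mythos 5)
Your proposal is correct and follows essentially the same route as the paper, whose proof is the one-line instruction to substitute the impulsive reformulation \eqref{eq:switched_imp} into Remark \ref{rem:swapped_range}; you simply carry out the details it leaves implicit (the blockwise decoupling of the flow inequality along $\bar A=\diag_k(A_k)$, the extraction of the $i$-th block of the jump inequality for $J_{ji}=e_je_i^\T\otimes I_n$, and the direct verification of (a)$\Leftrightarrow$(b) via the monotonicity of $\tau\mapsto\zeta_i(\tau)^\T e^{A_i\tau}$). Two minor observations: the condition you derive, $\lambda_j^\T e^{A_i\theta_i}-\lambda_i^\T<0$, is indeed what statement (a) should read (the printed $-\lambda_j^\T$ is a typo, cf.\ Corollary \ref{cor:minDT_switched2}, (b)), and your somewhat informal dismissal of the off-diagonal block conditions $-\zeta_k(\theta_i)^\T+\eps\mathds{1}_n^\T\le0$, $k\ne i$, is ultimately immaterial because your step-three argument establishes (a)$\Leftrightarrow$(b) and the stability conclusion without ever invoking them.
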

\begin{proof}
    The proof readily follows from the substitution of the impulsive system formulation \eqref{eq:switched_imp} of the system \eqref{eq:switched} in the conditions stated in Remark \ref{rem:swapped_range}.
\end{proof}

\subsubsection{Stabilization under range dwell-time switching}

We consider here the control law
\begin{equation}\label{eq:sfcl5}
    u_{\sigma(t_k)}(t_k+\tau)=K_{\sigma(t_k)}(\tau)x(t_k+\tau),\ \tau\in[0,T_k)
\end{equation}
where $K_i:[0,\bar T]\mapsto\mathbb{R}^{m_c\times n}$, $i=1,\ldots,N$.  \bluee{The above control law is the switched systems version of the control law \eqref{eq:sfcl3}. The rationale for such a control law is explained in Section \ref{sec:impz:maximum}.} The following result is obtained by applying Theorem \ref{sec:impz:range} to the system \eqref{eq:switched_imp}-\eqref{eq:sfcl5} and can be seen as the "positive systems" version of a result in \cite{Briat:14f}:
\begin{corollary}\label{th:range_switched_stabz_2}
There exists a controller of the form \eqref{eq:sfcl5} such that the closed-loop system \eqref{eq:switched}-\eqref{eq:sfcl5} is positive and asymptotically stable under range dwell-time if one of the following equivalent statements holds:
\begin{enumerate}[(a)]
\item There exist vectors $\lambda_i\in\mathbb{R}_{>0}^n$, $i=1,\ldots,N$, and matrix-valued functions $K_i:[0,T_{max}^i]\mapsto\mathbb{R}^{m\times n}$, $i=1,\ldots,N$, such that the matrix $A_i+B_iK_i(\tau_i)$, $i=1,\ldots,N$, is Metzler for all $\tau_i\in[0,T_{max}^i]$ and such that the inequality
    \begin{equation}\label{eq:minDTza2_2}
     \Psi_i(\theta_i)\lambda_j-\lambda_i<0
    \end{equation}
    holds for all $\theta_i\in[T_{min}^i,T_{max}^i]$ and all $i,j=1,\ldots,N$, $i\ne j$, where
    \begin{equation}
      \dfrac{d\Psi_i(s)}{ds}=\left(A_i+B_iK_i(s)\right)\Psi_i(s),\ \Psi_i(0)=I_n,\ s\ge0.
\end{equation}
 \item There exist matrix-valued functions $X_i:[0,T^i_{max}]\mapsto\mathbb{D}^n$, $X_i(0)\in\mathbb{D}_{\succ0}^n$, $i=1,\ldots,N$, matrix-valued functions $U_i:[0,T_{max}^i]\mapsto\mathbb{R}^{m\times n}$, $i=1,\ldots,N$, and scalars $\alpha,\eps>0$ such that the inequalities
         \begin{equation}
                  A_iX_i(\tau_i)+B_iU_i(\tau_i)+\alpha I\ge0,
         \end{equation}
    \begin{equation}
      \left[\dot{X}_i(\tau_i)+A_iX_i(\tau_i)+B_iU_i(\tau_i)\right]\mathds{1}_n<0
    \end{equation}
  and
  \begin{equation}
    \left[X_j(0)-X_i(\theta_i)+\eps I\right]\mathds{1}_n\le0
  \end{equation}
 hold for all $\tau_i\in[0,T_{max}^i]$, all $\theta_i\in[T_{min}^i,T_{max}^i]$ and all $i,j=1,\ldots,N$, $i\ne j$. Moreover, in such a case, suitable controller gains can be computed using the relations  $K_i(\tau)=U_i(\tau)X_i(\tau)^{-1}$, $i=1,\ldots,N$.
\end{enumerate}
\end{corollary}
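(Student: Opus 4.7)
The plan is to derive this corollary directly from Theorem \ref{th:rangeDT_stabz} applied to the impulsive system reformulation \eqref{eq:switched_imp} of the switched system, exactly as the proof of Corollary \ref{th:range_switched_2} does for the stability case. First I would write the closed-loop impulsive system associated with \eqref{eq:sfcl5}. Because the impulsive state $\bar x\in\mathbb{R}^{Nn}$ decomposes as $\bar x=\col_i(\bar x_i)$ with only the block indexed by the current mode being meaningful, the natural block-diagonal lifting of the mode-dependent gain $K_{\sigma}(\tau)$ is $\bar K(\tau)=\diag_i(K_i(\tau))$ so that $\bar A+\bar B\bar K(\tau)=\diag_i\bigl(A_i+B_iK_i(\tau)\bigr)$.

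Next I would decompose all decision variables block-wise. Write $\lambda=\col_i(\lambda_i)$ for statement (a) and $X(\tau)=\diag_i(X_i(\tau))$, $U(\tau)=\diag_i(U_i(\tau))$ for statement (b). Since $J_{ij}=e_ie_j^\T\otimes I_n$, the term $J_{ij}X(0)\mathds{1}_{Nn}$ simply reads off the $j$-th diagonal block of $X(0)$ and places it in the $i$-th position. Substituting the block-diagonal ansatz into the conditions of Theorem \ref{th:rangeDT_stabz}, (a) yields the Metzler/positivity constraint $A_i+B_iK_i(\tau)$ Metzler per mode and the bound $\Psi_i(\theta_i)\lambda_j-\lambda_i<0$ for every ordered pair $(i,j)$ with $i\neq j$, which is exactly statement (a) of the corollary. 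Likewise, substitution into Theorem \ref{th:rangeDT_stabz}, (b) produces the componentwise inequalities in statement (b), with the change of variables $K_i(\tau)=U_i(\tau)X_i(\tau)^{-1}$ recovered block by block from the global change of variables $K(\tau)=U(\tau)X(\tau)^{-1}$.

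A few technical points deserve care. First, the range dwell-time $[T_{min},T_{max}]$ in Theorem \ref{th:rangeDT_stabz} must be replaced by the mode-dependent interval $[T^i_{min},T^i_{max}]$; this is legitimate because in the impulsive representation the clock $\tau$ runs on the mode currently active (the $i$-th block), and the jump only triggers when $\tau\in[T^i_{min},T^i_{max}]$, so the corresponding flow and jump inequalities can be indexed by $i$ without loss. Second, since only one block of $\bar x$ is nonzero at a time, the discrete jump input $u_d$ in \eqref{eq:mainsystu} is absent here (pure switching, no state reset beyond permutation), hence $B_d=0$, $U_d=0$, and the $J$-matrix takes the combinatorial form $J_{ij}$ indexed by ordered pairs.

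The main obstacle I anticipate is not the algebraic substitution but the handling of the combinatorial structure introduced by the jump maps $\{J_{ij}\}_{i\neq j}$: the single jump condition in Theorem \ref{th:rangeDT_stabz} must be enforced once for every admissible transition, which is what ultimately produces the double-indexing over $i,j$ with $i\neq j$ in the corollary. Checking that the block-diagonal ansatz for $X$ does not introduce conservatism (i.e.\ that it is without loss of generality given that stability of the switched system corresponds to asymptotic decay of \emph{one} block at a time in the impulsive lift) is the subtle point, but it follows from the same argument used in the proof of Corollary \ref{th:minDT_switched_2}, so I would cite it and close the proof with ``the rest follows along the same lines as the proof of Theorem \ref{th:rangeDT_stabz}.''
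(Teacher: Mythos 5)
Your proposal is correct and follows exactly the route the paper takes: the paper's own proof is the one-line observation that the result ``readily follows from the substitution of the impulsive system formulation \eqref{eq:switched_imp} into the conditions of Theorem \ref{th:rangeDT_stabz}.'' Your block-diagonal decomposition of $\bar K$, $\lambda$, $X$, $U$, the reading-off of $J_{ij}X(0)\mathds{1}$, and the per-pair enforcement of the jump condition are precisely the details the paper leaves implicit.
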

\begin{proof}
      The proof readily follows from the substitution of the impulsive system formulation \eqref{eq:switched_imp} of the system \eqref{eq:switched} in the conditions stated in Theorem \ref{th:rangeDT_stabz}.
\end{proof}

\subsection{Examples}\label{sec:SW:examples}

\begin{example}[Stability under minimum dwell-time \#1]
Let us consider here the system \eqref{eq:switched} with the matrices \cite{Zhao:12b}
\begin{equation}\label{eq:switched_syst:minDT1}
  A_1=\begin{bmatrix}
    -0.5302& 0.0012& 0.0873\\
    0.2185 &-0.7494& 0.5411\\
    0.7370 &0.1543& -0.3606
  \end{bmatrix}\ \textnormal{and}\ A_2=\begin{bmatrix}
  -0.5136 &0.4419 &0.3689\\
    0.1840& -0.3951& 0.0080\\
    0.3163& 0.6099& -1.0056
  \end{bmatrix}.
\end{equation}
  Both matrices are Hurwitz stable but do not admit any common linear copositive Lyapunov function. We then look for an estimate for the minimum dwell-time and we get the results summarized in Table~\ref{tab:switched:1}. We can see that by increasing the degree of the polynomials in the sum of squares program, we can reach the value obtained by the original linear program. %We can also observe that the condition stated in Corollary \ref{cor:minDT_switched1} results in an infeasible program while the condition involving the swapped impulsive system associated with the switched system (i.e. Corollary \ref{cor:minDT_switched2}) yields a feasible program. This illustrates once again the need for stating stability conditions for the swapped system.
  The sum of squares conditions are also able to approach the values of minimum dwell-time computed using the conditions of Corollary  \ref{th:minDT_switched}, (a), when the degree $d_s$ of the polynomials increases.
\end{example}

\begin{example}[Stability under minimum dwell-time \#2]
  Let us consider here the system \eqref{eq:switched} with the matrices \cite{Blanchini:15}
\begin{equation}\label{eq:switched_syst:minDT2}
  A_1=\begin{bmatrix}
   -1.1309& 0.0087& 0.8499\\
0.0222& -1.0413 &0.5865\\
0.4105& 0.4817 &-0.8792
  \end{bmatrix}\ \textnormal{and}\ A_2=\begin{bmatrix}
  -2.9923& 1.5069 &2.9142\\
4.0681& -3.9685 &1.8570\\
0.1072 &0.0618& -0.7999
  \end{bmatrix}.
\end{equation}
  As in the previous example, these matrices are Hurwitz stable but do not admit any common linear copositive Lyapunov function. %We can observe in Table~\ref{tab:switched:1} that, for this example again, the conditions stated in Corollary \ref{cor:minDT_switched1} are more conservative than the conditions  stated in Corollary \ref{cor:minDT_switched2}.
  We can observe  in Table~\ref{tab:switched:1} that the sum of squares conditions are again able to approach the values of minimum dwell-time computed using the conditions of Corollary  \ref{th:minDT_switched}, (a), when the degree $d_s$ of the polynomials increases.
\end{example}

\begin{table}[H]
  \centering
  \caption{Minimum dwell-time results for the switched systems \eqref{eq:switched}-\eqref{eq:switched_syst:minDT1} and \eqref{eq:switched}-\eqref{eq:switched_syst:minDT2} obtained using Corollary \ref{th:minDT_switched}.}\label{tab:switched:1}
  \begin{tabular}{|c|c|c|c|c|c|}
  \hline
     & Result & Method & $\bar T$ & No. variables & Solving time\\
    \hline
    \hline
    \multirow{5}{*}{System \eqref{eq:switched_syst:minDT1}} %&Cor. \ref{cor:minDT_switched1}, (a) & -- & infeasible & -- & --\\
                                                                                & Cor. \ref{th:minDT_switched}, (a) & -- & 3.4296 & 18/6 & 0.9024\\
                                                                                & Cor. \ref{th:minDT_switched}, (c) & SOS ($d_s=1$) & infeasible & 114/48 & --\\
                                                                                & Cor. \ref{th:minDT_switched}, (c) & SOS ($d_s=2$) & 3.7063 & 198/60 & 1.2610\\
                                                                                & Cor. \ref{th:minDT_switched}, (c) & SOS ($d_s=3$) & 3.4538 & 306/72 & 1.6763\\
\hline
  \multirow{5}{*}{System \eqref{eq:switched_syst:minDT2}} %& Cor. \ref{cor:minDT_switched1}, (a) & -- & 4.2022 & 18/6 & 0.9090\\
                                                                                & Cor. \ref{th:minDT_switched}, (a) & -- & 1.0717 & 18/6 & 1.4464\\
                                                                                & Cor. \ref{th:minDT_switched}, (c) & SOS ($d_s=1$) & 5.0992 & 114/48 & 1.0844\\
                                                                                & Cor. \ref{th:minDT_switched}, (c) & SOS ($d_s=2$) & 2.2637 & 198/60 & 1.2228\\
                                                                                & Cor. \ref{th:minDT_switched}, (c) & SOS ($d_s=3$) & 1.0862  & 306/72 & 1.5291 \\
\hline
  \end{tabular}
\end{table}

\begin{example}[Stability under range dwell-time]
 Let us consider the system \eqref{eq:switched} with the matrices \cite{Briat:13b}
  \begin{equation}\label{eq:ex1}
  \begin{array}{lclclcl}
        A_1&=&\begin{bmatrix}
    -2 & 1\\
    5 & -3
    \end{bmatrix},& &A_2&=&\begin{bmatrix}
      0.1 & 0\\
      0.1 & 0.2
    \end{bmatrix}.
  \end{array}
  \end{equation}
Note that the matrix $A_1$ is Hurwitz stable and $A_2$ is anti-stable. Then the system cannot be stable under arbitrary switching or under minimum dwell-time switching. We hence consider a mode-dependent range dwell-time criterion and let $T_{min}^2=0.01$ and $T^1_{max}=\infty$. We then consider Theorem  \ref{th:range_switched_2} for different values for $T^1_{min}=1$ and compute the associated maximal values for $T_{max}^2$. The results are gathered in Table \ref{tab:switched:2} where we can recover the values by choosing a sufficiently large polynomial degree $d_s$. Compared to the gridding approach, the SOS approach yields more accurate conditions that are faster to solve.
\end{example}

\begin{table}[H]
  \centering
  \caption{Mode-dependent range dwell-time results for the switched system \eqref{eq:switched}-\eqref{eq:ex1} with  $T_{min}^2=0.01$ and $T^1_{max}=\infty$ obtained using  Corollary \ref{th:range_switched_2}.}\label{tab:switched:2}
  \begin{tabular}{|c|c|c|c|c|c|}
  \hline
     & Result & Method & $T^2_{max}$ & No. variables & Solving time\\
    \hline
    \hline
\multirow{5}{*}{\makecell{System~\eqref{eq:ex1},\\ $T^1_{min}=1$}} %& Th. \ref{th:range_switched_1}, (a) & Gridded ($N_g=201$) & 1.2847  & 410/4 & 3.9100\\
                                                                                & Th. \ref{th:range_switched_2}, (a) & Gridded ($N_g=201$) & 1.2847& 410/4 & 3.6096\\
                                                                                & Th. \ref{th:range_switched_2}, (b) & SOS ($d_s=1$) &  1.2788 & 80/28 & 0.8220\\
                                                                                & Th. \ref{th:range_switched_2}, (b) & SOS ($d_s=2$) &   1.2847 & 146/36 & 0.9641\\
                                                                                & Th. \ref{th:range_switched_2}, (b) & SOS ($d_s=3$) &   1.2847 & 232/44 & 1.1172\\
\hline
\multirow{5}{*}{\makecell{System~\eqref{eq:ex1},\\ $T^1_{min}=2$}} %& Th. \ref{th:range_switched_1}, (a) & Gridded ($N_g=201$) & 2.4154  & 410/4 & 3.5536\\
                                                                                & Th. \ref{th:range_switched_2}, (a) & Gridded ($N_g=201$) & 2.5471 & 410/4 & 3.5899\\
                                                                                & Th. \ref{th:range_switched_2}, (b) & SOS ($d_s=1$) &  2.5010 & 80/28 & 0.7727\\
                                                                                & Th. \ref{th:range_switched_2}, (b) & SOS ($d_s=2$) &   2.5470 & 146/36 & 1.0925\\
                                                                                & Th. \ref{th:range_switched_2}, (b) & SOS ($d_s=3$) &   2.5470 & 232/44 & 1.2273\\
\hline
\multirow{5}{*}{\makecell{System~\eqref{eq:ex1},\\ $T^1_{min}=5$}} %& Th. \ref{th:range_switched_1}, (a) & Gridded ($N_g=201$) & 5.5461  & 410/4 & 3.6841\\
                                                                                & Th. \ref{th:range_switched_2}, (a) & Gridded ($N_g=201$) & 6.2158 & 410/4 & .37078\\
                                                                                & Th. \ref{th:range_switched_2}, (b) & SOS ($d_s=1$) &  5.5783 & 80/28 & 0.8722\\
                                                                                & Th. \ref{th:range_switched_2}, (b) & SOS ($d_s=2$) &   6.2112 & 146/36 & 1.0439\\
                                                                                & Th. \ref{th:range_switched_2}, (b) & SOS ($d_s=3$) &   6.2140 & 232/44 & 1.3031\\
\hline
%\multirow{5}{*}{\parbox{2cm}{System~\eqref{eq:ex1} {$T^1\in[7,\infty)$}}} & Th. \ref{th:range_switched_1}, (a) & Gridded ($N_g=201$) & 7.6332  & 410/4 &3.5478\\
%                                                                                & Th. \ref{th:range_switched_2}, (a) & Gridded ($N_g=201$) & 8.5803& 410/4 &3.3789\\
%                                                                                & Th. \ref{th:range_switched_2}, (b) & SOS ($d_s=1$) &  7.0319 & 80/28 & 0.8488\\
%                                                                                & Th. \ref{th:range_switched_2}, (b) & SOS ($d_s=2$) &  8.5569 & 146/36 & 1.0847\\
%                                                                                & Th. \ref{th:range_switched_2}, (b) & SOS ($d_s=3$) &  8.6006 & 232/44 & 1.3273\\
%\hline
  \end{tabular}
\end{table}

\begin{example}[Stabilization under minimum dwell-time]
Let us consider the system \eqref{eq:switched} with the matrices
  \begin{equation}\label{eq:ex:minDT_switched_stabz1}
    A_1=\begin{bmatrix}
      1 &2\\ 0& -1
    \end{bmatrix},\ B_1=\begin{bmatrix}
      1\\ 0
    \end{bmatrix},\ A_2=\begin{bmatrix}
      -2 &1\\ 2& 3
    \end{bmatrix},\ B_2=\begin{bmatrix}
      0\\
      1
    \end{bmatrix}.
  \end{equation}
  Note that both subsystems are unstable. The goal is then to design a controller of the form \eqref{eq:sfcl4} such that the closed-loop system is positive and stable under minimum dwell-time $\bar T=0.1$. We consider Theorem \ref{th:minDT_switched_2} and solve the conditions using the SOS relaxation with $d_s=1/2$ (degree of $X$ and $U$ is one while the additional polynomials are of degree 2). The obtained semidefinite program has 180 primal variables and 71 dual variables. Solving the conditions  yields the controller matrices
\begin{equation}\label{eq:ex:minDT_switched_stabz2}
\begin{array}{lcl}
  K_1(\tau)&=&\begin{bmatrix}
    \dfrac{-0.7115\tau+2.5772}{0.8189\tau   -0.8234} &  \dfrac{-0.5314\tau+0.9017}{0.2750\tau   -0.6962}
  \end{bmatrix},\\
  K_2(\tau)&=&\begin{bmatrix}
    \dfrac{-0.5525 \tau + 1.0274}{0.2470 \tau  -0.7918} &  \dfrac{-1.1025 \tau+   3.6721}{0.4632 \tau  -0.7159}.
  \end{bmatrix}
  \end{array}
\end{equation}
For simulation purposes, we generate a random sequence of dwell-times satisfying the minimum dwell-time constraint and we obtain the trajectories depicted in Figure \ref{fig:minDT_switched_stabz} where we can see observe the stabilizing effect of the controller.

\begin{figure}[H]
  \centering
  \includegraphics[width=0.47\textwidth]{./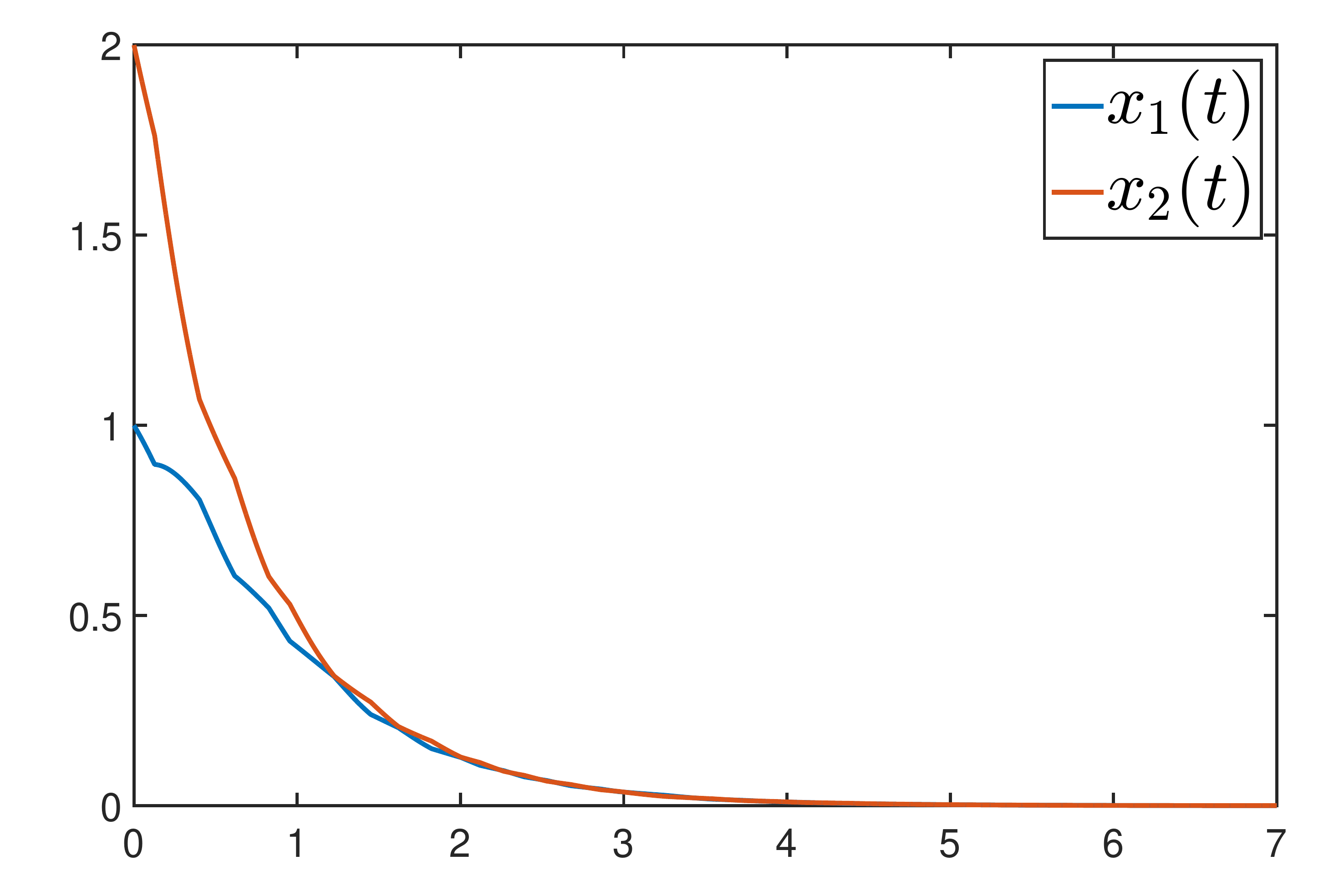}\hfill  \includegraphics[width=0.47\textwidth]{./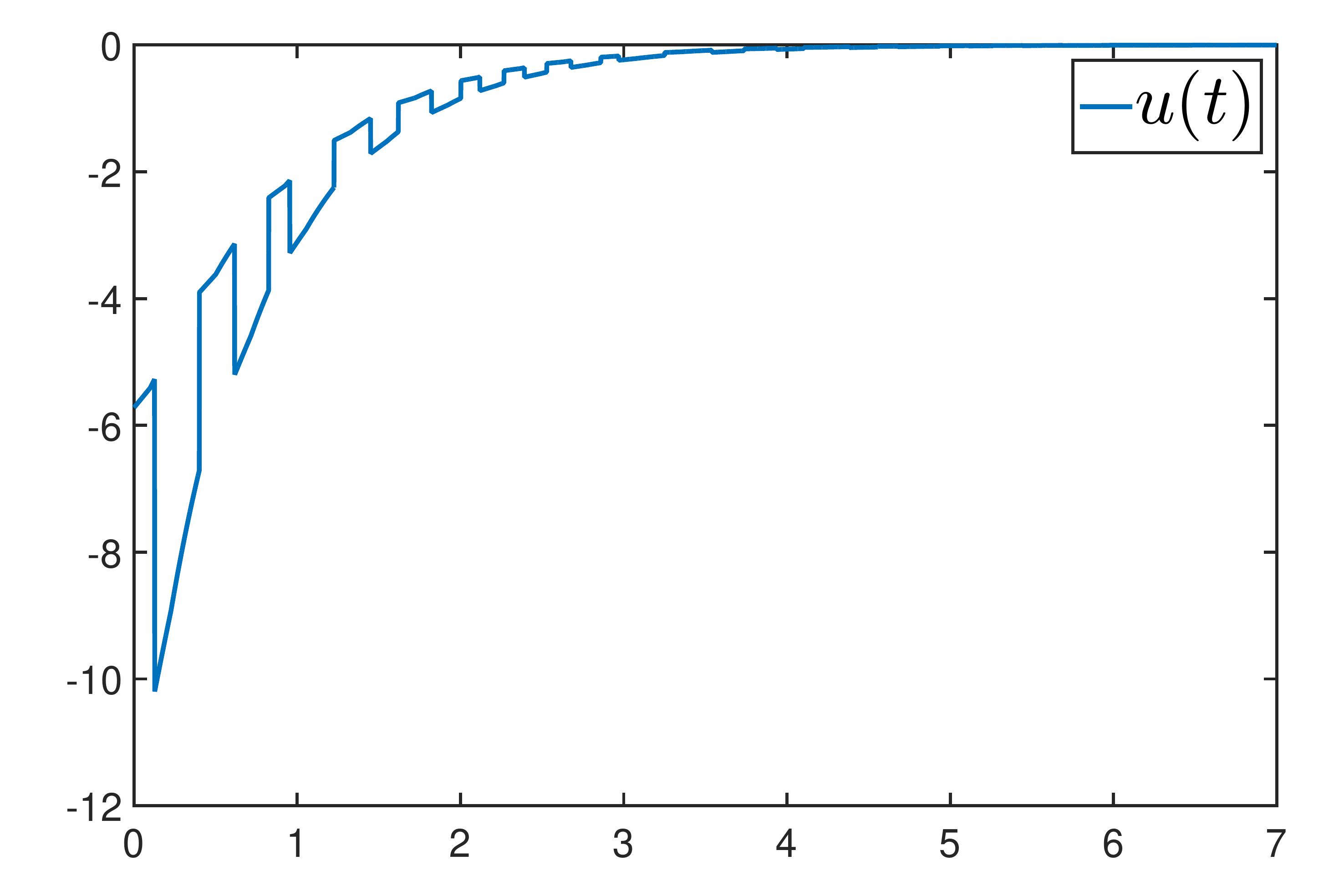}
  \caption{State (left) and control input (right) trajectories of the closed-loop system \eqref{eq:switched}-\eqref{eq:ex:minDT_switched_stabz1}-\eqref{eq:sfcl4}-\eqref{eq:ex:minDT_switched_stabz2}.}\label{fig:minDT_switched_stabz}
\end{figure}
\end{example}

\begin{example}[Stabilization under range dwell-time]
Let us consider the system \eqref{eq:switched} with the matrices
  \begin{equation}\label{eq:ex:rangeDT_switched_stabz1}
    A_1=\begin{bmatrix}
      2 &3\\ 1& -3
    \end{bmatrix},\ B_1=\begin{bmatrix}
      1\\ 0
    \end{bmatrix},\ A_2=\begin{bmatrix}
      -4 &2\\ 3& -2
    \end{bmatrix},\ B_2=\begin{bmatrix}
      0\\
      1
    \end{bmatrix}.
  \end{equation}
  As in the previous examples, both subsystems are unstable and we seek for a controller of the form \eqref{eq:sfcl5} that makes the closed-loop system positive and stable under range dwell-time  $(T_{min}^1,T_{max}^1)=(0.2,0.3)$ and $(T_{min}^2,T_{max}^2)=(0.1,0.2)$.  We consider Theorem \ref{th:range_switched_2} and solve the conditions using the SOS relaxation with $d_s=1/2$ (degree of $X$ and $U$ is one while the additional polynomials are of degree 2). The semidefinite program to solve has  229 primal variables and 84 dual variables. Solving the conditions takes 1.5875 seconds and returns the controller matrices
\begin{equation}\label{eq:ex:rangeDT_switched_stabz2}
\begin{array}{lcl}
  K_1(\tau)&=&\begin{bmatrix}
    \dfrac{-0.9381\tau-2.7094}{0.4876\tau+0.4790} &  \dfrac{-0.7981\tau   -0.9812}{0.3152\tau+  0.5428}
  \end{bmatrix},\\
  K_2(\tau)&=&\begin{bmatrix}
    \dfrac{-0.2044\tau   -0.6353}{0.1183\tau+    0.5053} &  \dfrac{0.2601\tau   -0.7524}{0.2289\tau+ 0.5581}
  \end{bmatrix}.
  \end{array}
\end{equation}
For simulation purposes, we generate a random sequence of dwell-times satisfying the range dwell-time constraint  and we obtain the trajectories depicted in Figure \ref{fig:rangeDT_switched_stabz} where we can see observe the stabilizing effect of the controller.

\begin{figure}[H]
  \centering
  \includegraphics[width=0.47\textwidth]{./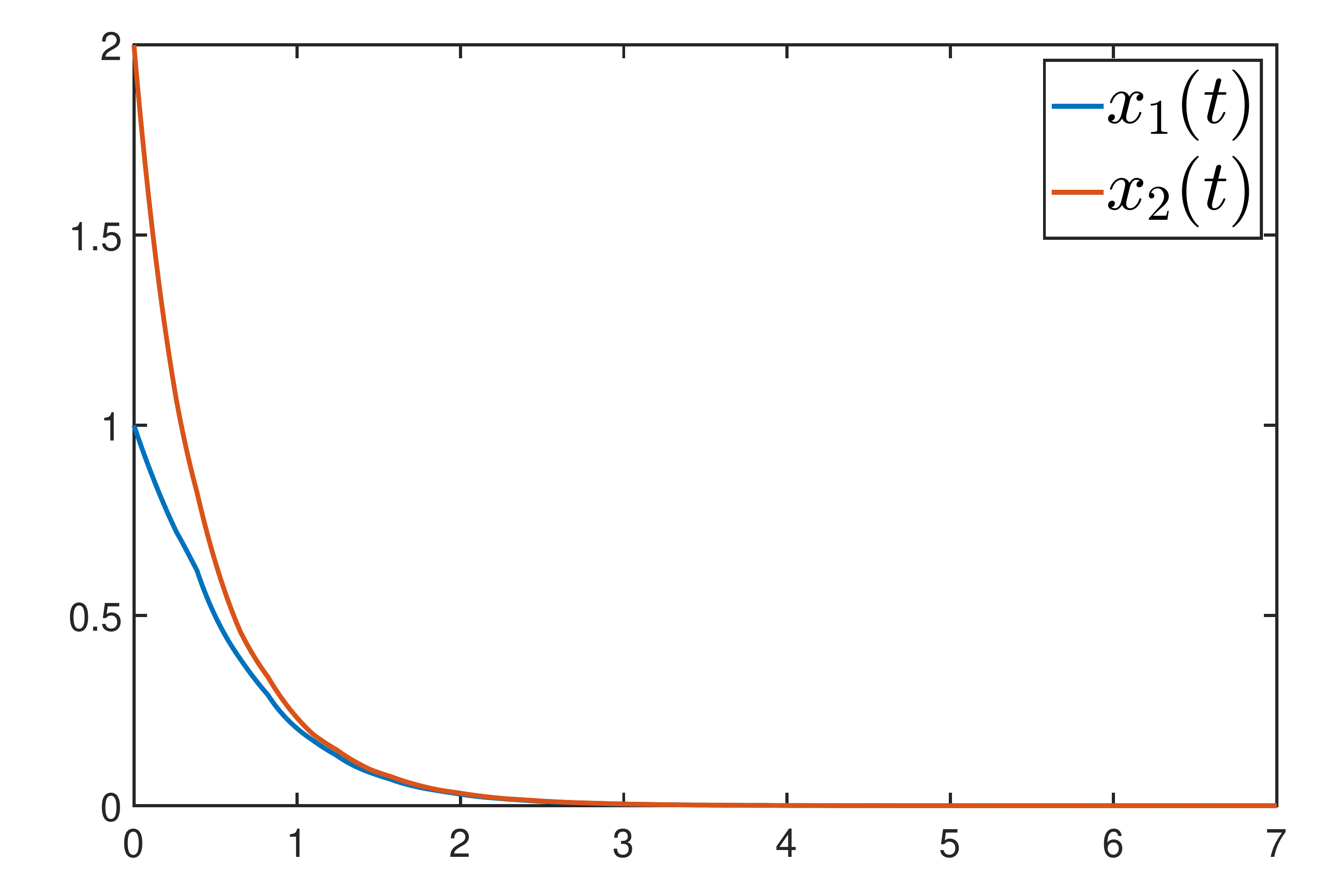}\hfill  \includegraphics[width=0.47\textwidth]{./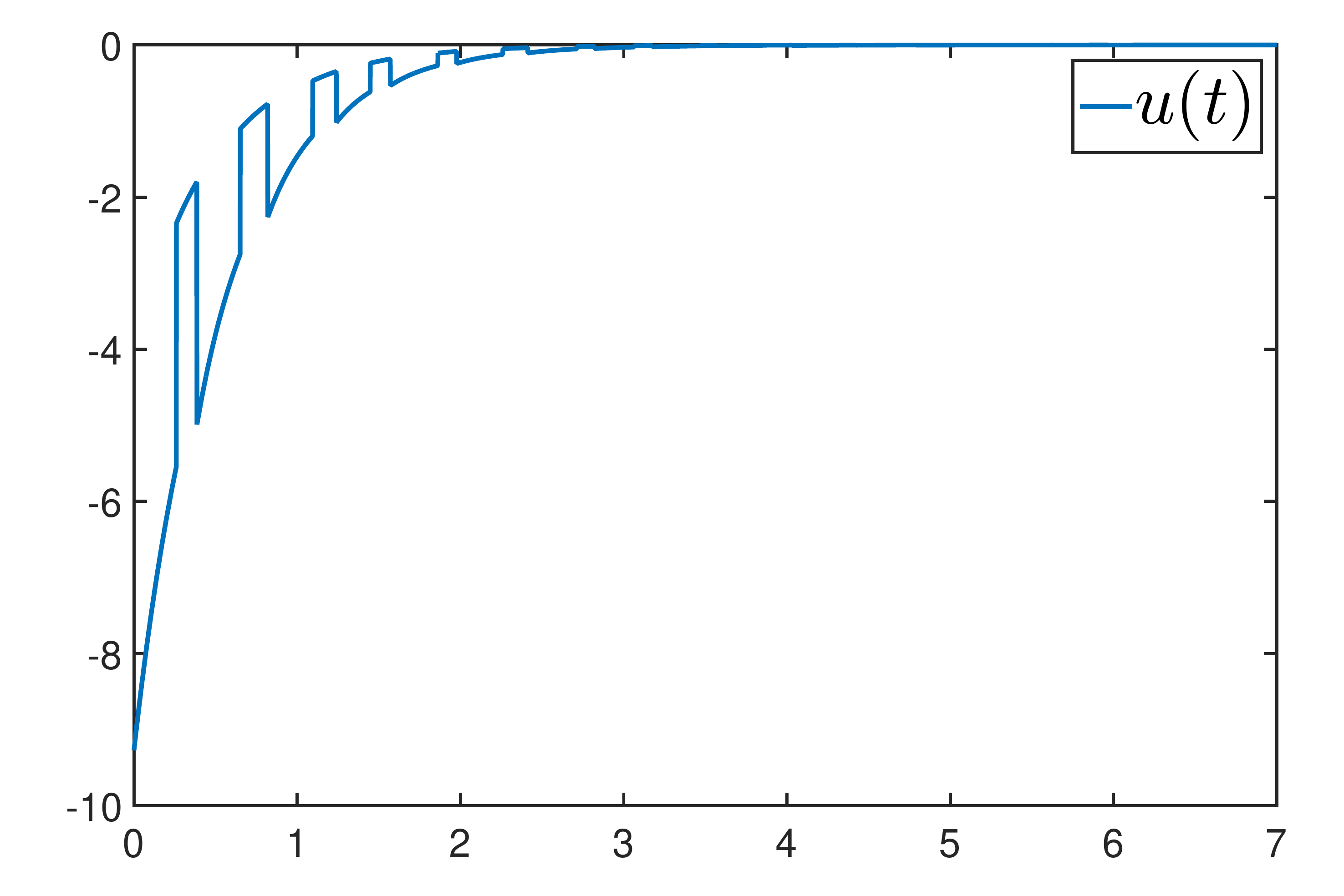}
   \caption{State (left) and control input (right) trajectories of the closed-loop system \eqref{eq:switched}-\eqref{eq:ex:rangeDT_switched_stabz1}-\eqref{eq:sfcl5}-\eqref{eq:ex:rangeDT_switched_stabz2}.}\label{fig:rangeDT_switched_stabz}
\end{figure}

\end{example}

\section{Conclusion}

Several stability and stabilization conditions for linear positive impulsive systems and linear positive switched systems have been obtained using the notion of dwell-times. Discrete-time stability conditions have been obtained and reformulated using a lifting approach in order to get conditions that can be applied for design purposes. Because of their infinite-dimensional nature, several relaxation approaches have been proposed to make the lifted conditions finite-dimensional. Several examples have been given for illustration.

\blue{Possible extensions are the stability analysis and stabilization linear positive impulsive systems with inputs using the $L_1$-, $L_2$- and $L_\infty$-gains in the same way it has been done with linear positive systems \cite{Briat:11g,Briat:11g,Tanaka:10,Tanaka:13a}, linear positive systems with delays \cite{Briat:11g,Shen:14,Shen:15}, linear positive switched systems \cite{Blanchini:15} and linear positive Markov jump systems \cite{Bolzern:15}. The consideration of the obtained conditions for the design of interval observers \cite{Gouze:00,Mazenc:11,Briat:15g} for (not necessarily positive) linear impulsive systems or linear switched systems is also an interesting application of the developed theory. Developing lifted conditions for polyhedral Lyapunov functions is also quite appealing due to the universal properties these functions have. Finally, the extension to positive/monotone nonlinear systems is also of potential interest.}


\begin{thebibliography}{104}
\providecommand{\natexlab}[1]{#1}
\providecommand{\url}[1]{\texttt{#1}}
\expandafter\ifx\csname urlstyle\endcsname\relax
  \providecommand{\doi}[1]{doi: #1}\else
  \providecommand{\doi}{doi: \begingroup \urlstyle{rm}\Url}\fi

\bibitem[Farina and Rinaldi(2000)]{Farina:00}
L.~Farina and S.~Rinaldi.
\newblock \emph{Positive Linear Systems: Theory and Applications}.
\newblock John Wiley \& Sons, 2000.

\bibitem[Shorten et~al.(2006)Shorten, Wirth, and Leith]{Shorten:06}
R.~Shorten, F.~Wirth, and D.~Leith.
\newblock A positive systems model of {TCP}-like congestion control: asymptotic
  results.
\newblock \emph{{IEEE} Transactions on Networking}, 14(3):\penalty0 616--629,
  2006.

\bibitem[Briat et~al.(2015)Briat, Yavuz, Hjalmarsson, Johansson, J{\"{o}}nsson,
  Karlsson, and Sandberg]{Briat:13f}
C.~Briat, E.~A. Yavuz, H.~Hjalmarsson, K.-H. Johansson, U.~T. J{\"{o}}nsson,
  G.~Karlsson, and H.~Sandberg.
\newblock The conservation of information, towards an axiomatized modular
  modeling approach to congestion control.
\newblock \emph{{IEEE} Transactions on Networking}, 23(3), 2015.

\bibitem[Murray(2002)]{Murray:02}
J.~D. Murray.
\newblock \emph{Mathematical Biology Part I. An Introduction. 3rd Edition}.
\newblock Springer-Verlag Berlin Heidelberg, 2002.

\bibitem[Briat and Khammash(2012)]{Briat:12c}
C.~Briat and M.~Khammash.
\newblock Computer control of gene expression: Robust setpoint tracking of
  protein mean and variance using integral feedback.
\newblock In \emph{51st {IEEE} Conference on Decision and Control}, pages
  3582--3588, Maui, Hawaii, USA, 2012.

\bibitem[Briat and Khammash(2013)]{Briat:13h}
C.~Briat and M.~Khammash.
\newblock Integral population control of a quadratic dimerization process.
\newblock In \emph{52nd {IEEE} Conference on Decision and Control}, pages
  3367--3372, Florence, Italy, 2013.

\bibitem[Briat et~al.(2016)Briat, Gupta, and Khammash]{Briat:15e}
C.~Briat, A.~Gupta, and M.~Khammash.
\newblock Antithetic integral feedback ensures robust perfect adaptation in
  noisy biomolecular networks.
\newblock \emph{Cell Systems}, 2:\penalty0 17--28, 2016.

\bibitem[Briat and Verriest(2009)]{Briat:09h}
C.~Briat and E.~I. Verriest.
\newblock A new delay-{SIR} model for pulse vaccination.
\newblock \emph{Biomedical signal processing and control}, 4(4):\penalty0
  272--277, 2009.

\bibitem[Jonsson et~al.(2014)Jonsson, Rantzer, and Murray]{Jonsson:14}
V.~Jonsson, A.~Rantzer, and R.~M. Murray.
\newblock A scalable formulation for engineering combination therapies for
  evolutionary dynamics of disease.
\newblock In \emph{American Control Conference}, pages 2771--2778, Portland,
  USA, 2014.

\bibitem[Haddad and Chellaboina(2005)]{Haddad:05}
W.~M. Haddad and V.~Chellaboina.
\newblock Stability and dissipativity theory for nonnegative dynamic systems: a
  unified analysis framework for biological and physiological systems.
\newblock \emph{Nonlinear Analysis: Real World Applications}, 6:\penalty0
  35--65, 2005.

\bibitem[{Ait Rami} and Tadeo(2007)]{Aitrami:07}
M.~{Ait Rami} and F.~Tadeo.
\newblock Controller synthesis for positive linear systems with bounded
  controls.
\newblock \emph{{IEEE} Transactions on Circuits and Systems -- II. Express
  Briefs}, 54(2):\penalty0 151--155, 2007.

\bibitem[Briat(2013{\natexlab{a}})]{Briat:11h}
C.~Briat.
\newblock Robust stability and stabilization of uncertain linear positive
  systems via integral linear constraints - ${L_1}$- and ${L_\infty}$-gains
  characterizations.
\newblock \emph{{I}nternational {J}ournal of {R}obust and {N}onlinear
  {C}ontrol}, 23(17):\penalty0 1932--1954, 2013{\natexlab{a}}.

\bibitem[{Ait Rami}(2011)]{AitRami:11}
M.~{Ait Rami}.
\newblock Solvability of static output-feedback stabilization for {LTI}
  positive systems.
\newblock \emph{Systems \& Control Letters}, 60:\penalty0 704--708, 2011.

\bibitem[Tanaka et~al.(2013)Tanaka, Langbort, and Ugrinovskii]{Tanaka:13b}
T.~Tanaka, C.~Langbort, and V.~Ugrinovskii.
\newblock {DC}-dominant property of cone-preserving transfer functions.
\newblock \emph{Systems \& Control Letters}, 62(8):\penalty0 699--707, 2013.

\bibitem[Briat(2011)]{Briat:11g}
C.~Briat.
\newblock Robust stability analysis of uncertain linear positive systems via
  integral linear constraints - ${L_1}$- and ${L_\infty}$-gains
  characterizations.
\newblock In \emph{50th {IEEE} Conference on Decision and Control}, pages
  3122--3129, Orlando, Florida, USA, 2011.

\bibitem[Rantzer(2015)]{Rantzer:15}
A.~Rantzer.
\newblock An extended {K}alman-{Y}akubovich-{P}opov lemma for positive systems.
\newblock In \emph{1st IFAC Conference on Modelling, Identification and Control
  of Nonlinear Systems}, Saint Petersburg, Russia, 2015.

\bibitem[Colombino and Smith(2016)]{Colombino:15}
M.~Colombino and R.~S. Smith.
\newblock A convex characterization of robust stability for positive and
  positively dominated linear systems.
\newblock \emph{IEEE Transactions on Automatic Control}, 61(7):\penalty0
  1965--1971, 2016.

\bibitem[Colombino et~al.(2015)Colombino, Hempel, and Smith]{Colombino:15b}
M.~Colombino, A.~B. Hempel, and R.~S. Smith.
\newblock Robust stability of a class of interconnected nonlinear positive
  systems.
\newblock In \emph{American Control Conference}, pages 5312--5317, Chicago,
  USA, 2015.

\bibitem[Khong et~al.(2015)Khong, Briat, and Rantzer]{Briat:15cdc}
S.~Z. Khong, C.~Briat, and A.~Rantzer.
\newblock Positive systems analysis via integral linear constraints.
\newblock In \emph{54th {IEEE} Conference on Decision and Control}, pages
  6373--6378, Osaka, Japan, 2015.

\bibitem[Haddad and Chellaboina(2004)]{Haddad:04}
W.~M. Haddad and V.~Chellaboina.
\newblock Stability theory for nonnegative and compartmental dynamical systems
  with time delay.
\newblock \emph{Systems \& Control Letters}, 51(5):\penalty0 355--361, 2004.

\bibitem[{Ait Rami}(2009)]{AitRami:09}
M.~{Ait Rami}.
\newblock Stability analysis and synthesis for linear positive systems with
  time-varying delays.
\newblock In \emph{Positive systems - Proceedings of the 3rd
  {M}ultidisciplinary {I}nternational {S}ymposium on {P}ositive {S}ystems:
  {T}heory and {A}pplications ({POSTA} 2009)}, pages 205--216. Springer-Verlag
  Berlin Heidelberg, 2009.

\bibitem[Shen and Lam(2015)]{Shen:15}
J.~Shen and J.~Lam.
\newblock $\ell_\infty$/${L}_\infty$-gain analysis for positive linear systems
  with unbounded time-varying delays.
\newblock \emph{IEEE Transactions on Automatic Control}, 60(3):\penalty0
  857--862, 2015.

\bibitem[Briat(2016{\natexlab{a}})]{Briat:16b}
C.~Briat.
\newblock Exact stability results for linear positive systems with delays:
  alternative proofs using input-output methods.
\newblock \emph{submitted}, 2016{\natexlab{a}}.

\bibitem[Fornasini and Valcher(2010)]{Fornasini:10}
E.~Fornasini and M.~E. Valcher.
\newblock Linear copositive {L}yapunov functions for continuous-time positive
  switched systems.
\newblock \emph{IEEE Transactions on Automatic Control}, 55(8):\penalty0
  1933--1937, 2010.

\bibitem[Zappavigna et~al.(2010{\natexlab{a}})Zappavigna, Colaneri, Geromel,
  and Middleton]{Zappavigna:10a}
A.~Zappavigna, P.~Colaneri, J.~C. Geromel, and R.~H. Middleton.
\newblock Stabilization of continuous-time switched linear positive systems.
\newblock In \emph{American Control Conference}, pages 3275--3280, Baltimore,
  Maryland, {USA}, 2010{\natexlab{a}}.

\bibitem[Zappavigna et~al.(2010{\natexlab{b}})Zappavigna, Colaneri, Geromel,
  and Shorten]{Zappavigna:10b}
A.~Zappavigna, P.~Colaneri, J.~C. Geromel, and R.~Shorten.
\newblock Dwell time analysis for continuous-time switched linear positive
  systems.
\newblock In \emph{American Control Conference}, pages 6256--6261, Baltimore,
  Maryland, {USA}, 2010{\natexlab{b}}.

\bibitem[Blanchini et~al.(2015)Blanchini, Colaneri, and Valcher]{Blanchini:15}
F.~Blanchini, P.~Colaneri, and M.~E. Valcher.
\newblock Switched positive linear systems.
\newblock \emph{Foundations and Trends in Systems and Control},
  2(2-3):\penalty0 101--273, 2015.

\bibitem[Bolzern et~al.(2014)Bolzern, Colaneri, and {De Nicalao}]{Bolzern:14}
P.~Bolzern, P.~Colaneri, and G.~{De Nicalao}.
\newblock Stochastic stability of positive {M}arkov jump linear systems.
\newblock \emph{Automatica}, 50(4):\penalty0 1181--1187, 2014.

\bibitem[Bolzern and Colaneri(2015)]{Bolzern:15}
P.~Bolzern and P.~Colaneri.
\newblock Positive markov jump linear systems.
\newblock \emph{Foundations and Trends in Systems and Control},
  2(3-4):\penalty0 275--427, 2015.

\bibitem[Agarwal et~al.(2012)Agarwal, Berezansky, Braverman, and
  Domoshnitsky]{Agarwal:12}
R.~P. Agarwal, L.~Berezansky, E.~Braverman, and A.~Domoshnitsky.
\newblock \emph{Nonoscillation theory of functional differential equations with
  applications}.
\newblock Springer-Verlag, New York, USA, 2012.

\bibitem[Mazenc and Malisoff(2016)]{Mazenc:16}
F.~Mazenc and M.~Malisoff.
\newblock Stability analysis for time-varying systems with delay using linear
  {L}yapunov functionals and a positive systems approach.
\newblock \emph{IEEE Transactions on Automatic Control}, 61(3):\penalty0
  771--776, 2016.

\bibitem[Ngoc and Trinh(2016)]{Ngoc:16}
P.~H.~A. Ngoc and H.~Trinh.
\newblock Novel criteria for exponential stability of linear neutral
  time-varying differential systems.
\newblock \emph{IEEE Transactions on Automatic Control}, 61(6):\penalty0
  1590--1594, 2016.

\bibitem[Ngoc and Tinh(2016)]{Ngoc:16b}
P.~H.~A. Ngoc and C.~T. Tinh.
\newblock Explicit criteria for exponential stability of time-varying systems
  with infinite delay.
\newblock \emph{Mathematics of Control, Signals, and Systems}, 28(4):\penalty0
  1--30, 2016.

\bibitem[Gouz{\'{e}} et~al.(2000)Gouz{\'{e}}, Rapaport, and
  {Hadj-Sadok}]{Gouze:00}
J.~L. Gouz{\'{e}}, A.~Rapaport, and M.~Z. {Hadj-Sadok}.
\newblock Interval observers for uncertain biological systems.
\newblock \emph{Ecological modelling}, 133:\penalty0 45--56, 2000.

\bibitem[Mazenc and Bernard(2011)]{Mazenc:11}
F.~Mazenc and O.~Bernard.
\newblock Interval observers for linear time-invariant systems with
  disturbances.
\newblock \emph{Automatica}, 47:\penalty0 140--147, 2011.

\bibitem[Briat and Khammash(2016)]{Briat:15g}
C.~Briat and M.~Khammash.
\newblock Interval peak-to-peak observers for continuous- and discrete-time
  systems with persistent inputs and delays.
\newblock \emph{Automatica}, 74:\penalty0 206--213, 2016.

\bibitem[Dvirnyi and Slyn'ko(2004)]{Dvirnyi:04}
A.~I. Dvirnyi and V.~I. Slyn'ko.
\newblock Stability criteria for quasilinear impulsive systems.
\newblock \emph{International Applied Mechanics}, 40(5):\penalty0 592--599,
  2004.

\bibitem[Wang et~al.(2014)Wang, Zhang, and Liu]{Wang:14}
Y.-W. Wang, J.-S. Zhang, and M.~Liu.
\newblock Exponential stability of impulsive positive systems with mixed
  time-varying delays.
\newblock \emph{IET Control Theory and Applications}, 8(15):\penalty0
  1537--1542, 2014.

\bibitem[Zhang et~al.(2014)Zhang, Wang, Xiao, and Guan]{Zhang:14b}
J.-S. Zhang, Y.-W. Wang, J.-W. Xiao, and Z.-H. Guan.
\newblock Stability analysis of impulsive positive systems.
\newblock In \emph{19th IFAC World Congress}, pages 5987--5991, Cape Town,
  South Africa, 2014.

\bibitem[Perelson and Nelson(1999)]{Perelson:99}
A.~S. Perelson and P.~W. Nelson.
\newblock Mathematical analysis of hiv-i: Dynamics in vivo.
\newblock \emph{SIAM Review}, 41(1):\penalty0 3--44, 1999.

\bibitem[{Hernandez-Vargas} et~al.(2013){Hernandez-Vargas}, Colaneri, and
  Middleton]{Hernandez:13}
E.~A. {Hernandez-Vargas}, P.~Colaneri, and R.~H. Middleton.
\newblock Optimal therapy scheduling for a simpliﬁed {HIV} infection model.
\newblock 49:\penalty0 2874--2880, 2013.

\bibitem[{Ait Rami} et~al.(2014){Ait Rami}, Bokharaie, Mason, and
  Wirth]{AitRami:14}
M.~{Ait Rami}, V.~S. Bokharaie, O.~Mason, and F.~R. Wirth.
\newblock Stability criteria for {SIS} epidemiological models under switching
  policies.
\newblock \emph{Discrete and continuous dynamical series B}, 19:\penalty0
  2865--2887, 2014.

\bibitem[Blanchini et~al.(2012)Blanchini, Colaneri, and Valcher]{Blanchini:12b}
F.~Blanchini, P.~Colaneri, and M.~E. Valcher.
\newblock Co-positive lyapunov functions for the stabilization of positive
  switched systems.
\newblock \emph{IEEE Transactions on Automatic Control}, 57:\penalty0
  3038--3050, 2012.

\bibitem[Khargonekar and Sivashankar(1991)]{Khargonekar:91}
P.~P. Khargonekar and N.~Sivashankar.
\newblock ${\mathscr{h}_2}$ optimal control for sampled-data systems.
\newblock \emph{Systems \& Control Letters}, 17:\penalty0 425--426, 1991.

\bibitem[Sivashankar and Khargonekar(1994)]{Sivashankar:94}
N.~Sivashankar and P.~P. Khargonekar.
\newblock Characterization of the {${\mathcal{L}}_2$}-induced norm for linear
  systems with jumps with applications to sampled-data systems.
\newblock \emph{SIAM Journal on Control and Optimization}, 32(4):\penalty0
  1128--1150, 1994.

\bibitem[Naghshtabrizi et~al.(2008)Naghshtabrizi, Hespanha, and
  Teel]{Naghshtabrizi:08}
P.~Naghshtabrizi, J.~P. Hespanha, and A.~R. Teel.
\newblock Exponential stability of impulsive systems with application to
  uncertain sampled-data systems.
\newblock \emph{Systems \& Control Letters}, 57:\penalty0 378--385, 2008.

\bibitem[Briat(2013{\natexlab{b}})]{Briat:13d}
C.~Briat.
\newblock Convex conditions for robust stability analysis and stabilization of
  linear aperiodic impulsive and sampled-data systems under dwell-time
  constraints.
\newblock \emph{Automatica}, 49(11):\penalty0 3449--3457, 2013{\natexlab{b}}.

\bibitem[Morse(1996)]{Morse:96}
A.~S. Morse.
\newblock Supervisory control of families of linear set-point controllers -
  {P}art 1: {E}xact matching.
\newblock \emph{IEEE Transactions on Automatic Control}, 41(10):\penalty0
  1413--1431, 1996.

\bibitem[Hespanha and Morse(1999)]{Hespanha:99}
J.~P. Hespanha and A.~S. Morse.
\newblock Stability of switched systems with average dwell-time.
\newblock In \emph{38th Conference on Decision and Control}, pages 2655--2660,
  Phoenix, Arizona, USA, 1999.

\bibitem[Goebel et~al.(2009)Goebel, Sanfelice, and Teel]{Goebel:09}
R.~Goebel, R.~G. Sanfelice, and A.~R. Teel.
\newblock Hybrid dynamical systems.
\newblock \emph{{IEEE} Control Systems Magazine}, 29(2):\penalty0 28--93, 2009.

\bibitem[Geromel and Colaneri(2006)]{Geromel:06b}
J.~C. Geromel and P.~Colaneri.
\newblock Stability and stabilization of continuous-time switched linear
  systems.
\newblock \emph{{SIAM} Journal on Control and Optimization}, 45(5):\penalty0
  1915--1930, 2006.

\bibitem[Wirth(2005)]{Wirth:05}
F.~Wirth.
\newblock A converse {L}yapunov theorem for linear parameter-varying and linear
  switching systems.
\newblock \emph{{SIAM} Journal on Control and Optimization}, 44(1):\penalty0
  210--239, 2005.

\bibitem[Chesi et~al.(2012)Chesi, Colaneri, Geromel, Middleton, and
  Shorten]{Chesi:12}
G.~Chesi, P.~Colaneri, J.~C. Geromel, R.~Middleton, and R.~Shorten.
\newblock A nonconservative {LMI} condition for stability of switched systems
  with guaranteed dwell-time.
\newblock \emph{IEEE Transactions on Automatic Control}, 57(5):\penalty0
  1297--1302, 2012.

\bibitem[Blanchini and Colaneri(2008)]{Blanchini:10}
F.~Blanchini and P.~Colaneri.
\newblock Vertex/plane characterization of the dwell–time property for
  switching linear systems.
\newblock In \emph{49th IEEE Conference on Decision and Control}, pages
  3258--3263, 2008.

\bibitem[Seuret(2012)]{Seuret:12}
A.~Seuret.
\newblock A novel stability analysis of linear systems under asynchronous
  samplings.
\newblock \emph{Automatica}, 48(1):\penalty0 177--182, 2012.

\bibitem[Briat and Seuret(2012{\natexlab{a}})]{Briat:12h}
C.~Briat and A.~Seuret.
\newblock Convex dwell-time characterizations for uncertain linear impulsive
  systems.
\newblock \emph{{IEEE} Transactions on Automatic Control}, 57(12):\penalty0
  3241--3246, 2012{\natexlab{a}}.

\bibitem[Briat and Seuret(2013)]{Briat:13b}
C.~Briat and A.~Seuret.
\newblock Affine minimal and mode-dependent dwell-time characterization for
  uncertain switched linear systems.
\newblock \emph{{IEEE} Transactions on Automatic Control}, 58\penalty0
  (5):\penalty0 1304--1310, 2013.

\bibitem[Briat and Seuret(2015)]{Briat:15c}
C.~Briat and A.~Seuret.
\newblock On the necessity of looped-functionals arising in the analysis of
  pseudo-periodic, sampled-data and hybrid systems.
\newblock \emph{International Journal of Control}, 88((12):\penalty0
  2563--2569, 2015.

\bibitem[Briat(2016{\natexlab{b}})]{Briat:15f}
C.~Briat.
\newblock Theoretical and numerical comparisons of looped functionals and
  clock-dependent {L}yapunov functions - {T}he case of periodic and
  pseudo-periodic systems with impulses.
\newblock \emph{International Journal of Robust and Nonlinear Control},
  26:\penalty0 2232--2255, 2016{\natexlab{b}}.

\bibitem[Allerhand and Shaked(2011)]{Allerhand:11}
L.~I. Allerhand and U.~Shaked.
\newblock Robust stability and stabilization of linear switched systems with
  dwell time.
\newblock \emph{{IEEE} Transactions on Automatic Control}, 56(2):\penalty0
  381--386, 2011.

\bibitem[Allerhand and Shaked(2013{\natexlab{a}})]{Allerhand:13}
L.~I. Allerhand and U.~Shaked.
\newblock {Robust state-dependent switching of linear systems with dwell-time}.
\newblock \emph{IEEE Transactions on Automatic Control}, 58(4):\penalty0
  994--1001, 2013{\natexlab{a}}.

\bibitem[Allerhand and Shaked(2013{\natexlab{b}})]{Allerhand:13b}
L.~I. Allerhand and U.~Shaked.
\newblock Robust estimation of linear switched systems with dwell time.
\newblock \emph{International Journal of Control}, 86(114):\penalty0
  2067---2074, 2013{\natexlab{b}}.

\bibitem[Briat(2014)]{Briat:14a}
C.~Briat.
\newblock Convex lifted conditions for robust $\ell_2$-stability analysis and
  $\ell_2$-stabilization of linear discrete-time switched systems with minimum
  dwell-time constraint.
\newblock \emph{Automatica}, 50(3):\penalty0 976--983, 2014.

\bibitem[Shaked and Gershon(2014)]{Shaked:14}
U.~Shaked and E.~Gershon.
\newblock Robust ${H}_\infty$ control of stochastic linear switched systems
  with dwell-time.
\newblock \emph{International Journal of Robust and Nonlinear Control},
  24:\penalty0 1664--1676, 2014.

\bibitem[Briat(2015{\natexlab{a}})]{Briat:14f}
C.~Briat.
\newblock Convex conditions for robust stabilization of uncertain switched
  systems with guaranteed minimum and mode-dependent dwell-time.
\newblock \emph{Systems \& Control Letters}, 78:\penalty0 63--72,
  2015{\natexlab{a}}.

\bibitem[Briat(2015{\natexlab{b}})]{Briat:15d}
C.~Briat.
\newblock Stability analysis and control of {LPV} systems with piecewise
  constant parameters.
\newblock \emph{Systems \& Control Letters}, 82:\penalty0 10--17,
  2015{\natexlab{b}}.

\bibitem[Briat(2016{\natexlab{c}})]{Briat:15i}
C.~Briat.
\newblock Stability analysis and stabilization of stochastic linear impulsive,
  switched and sampled-data systems under dwell-time constraints.
\newblock \emph{Automatica}, 74:\penalty0 279--287, 2016{\natexlab{c}}.

\bibitem[Xiang(2016)]{Xiang:16}
W.~Xiang.
\newblock Necessary and sufﬁcient condition for stability of switched
  uncertain linear systems under dwell-time constraint.
\newblock \emph{IEEE Transactions on Automatic Control}, 61(11):\penalty0
  3619--3624, 2016.

\bibitem[Allerhand and Shaked(2016)]{Allerhand:15}
L.~I. Allerhand and U.~Shaked.
\newblock Robust switching-based fault tolerant control.
\newblock \emph{Journal of the Franklin Institute}, 61(11):\penalty0
  3619--3624, 2016.

\bibitem[Xiang et~al.(2016)Xiang, Zhai, and Briat]{Briat:TAC16}
W.~Xiang, G.~Zhai, and C.~Briat.
\newblock Stability analysis for {LTI} control systems with controller failures
  and its application in failure tolerant control.
\newblock \emph{IEEE Transactions on Automatic Control}, 61(3):\penalty0
  811--816, 2016.

\bibitem[Nguyen et~al.(2015)Nguyen, Sename, and Dugard]{Nguyen:15}
M.~Q. Nguyen, O.~Sename, and L.~Dugard.
\newblock A switched lpv observer for actuator fault estimation.
\newblock In \emph{1st IFAC Workshop on Linear Parameter Varying Systems},
  pages 194--199, 2015.

\bibitem[Briat and Seuret(2012{\natexlab{b}})]{Briat:11l}
C.~Briat and A.~Seuret.
\newblock A looped-functional approach for robust stability analysis of linear
  impulsive systems.
\newblock \emph{Systems \& Control Letters}, 61(10):\penalty0 980--988,
  2012{\natexlab{b}}.

\bibitem[Xiang(2015)]{Xiang:15a}
W.~Xiang.
\newblock On equivalence of two stability criteria for continuous-time switched
  systems with dwell time constraint.
\newblock \emph{Automatica}, 54:\penalty0 36--40, 2015.

\bibitem[Boyd and Vandenberghe(2004)]{Boyd:04}
S.~Boyd and L.~Vandenberghe.
\newblock \emph{Convex Optimization}.
\newblock Cambridge University Press, Cambridge, MA, USA, 2004.

\bibitem[Handelman(1988)]{Handelman:88}
D.~Handelman.
\newblock Representing polynomials by positive linear functions on compact
  convex polyhedra.
\newblock \emph{Pacific Journal of Mathematics}, 132(1):\penalty0 35--62, 1988.

\bibitem[Scherer and Hol(2006)]{Scherer:06}
C.~W. Scherer and C.~W.~J. Hol.
\newblock Matrix sum-of-squares relaxations for robust semi-definite programs.
\newblock \emph{Mathematical Programming: Series B}, 107:\penalty0 189--211,
  2006.

\bibitem[Kamyar and Peet(2015)]{Kamyar:15}
R.~Kamyar and M.~M. Peet.
\newblock Polynomial optimization with applications to stability analysis and
  control - alternatives to sum of squares.
\newblock \emph{Discrete and Continuous Dynamical Systems Series B},
  20(8):\penalty0 2383--2417, 2015.

\bibitem[Putinar(1993)]{Putinar:93}
M.~Putinar.
\newblock Positive polynomials on compact semi-algebraic sets.
\newblock \emph{Indiana Univ. Math. J.}, 42\penalty0 (3):\penalty0 969--984,
  1993.

\bibitem[Parrilo(2000)]{Parrilo:00}
P.~Parrilo.
\newblock \emph{Structured Semidefinite Programs and Semialgebraic Geometry
  Methods in Robustness and Optimization}.
\newblock PhD thesis, California Institute of Technology, Pasadena, California,
  2000.

\bibitem[Sturm(2001)]{Sturm:01a}
J.~F. Sturm.
\newblock Using {SEDUMI} $1. 02$, a {M}atlab {T}oolbox for {O}ptimization
  {O}ver {S}ymmetric {C}ones.
\newblock \emph{Optimization Methods and Software}, 11\penalty0 (12):\penalty0
  625--653, 2001.

\bibitem[T\"{u}t\"{u}nc\"{u} et~al.(2003)T\"{u}t\"{u}nc\"{u}, Toh, and
  Todd]{Tutuncu:03}
R.~H. T\"{u}t\"{u}nc\"{u}, K.~C. Toh, and M.~J. Todd.
\newblock {Solving semidefinite-quadratic-linear programs using {SDPT3}}.
\newblock \emph{Mathematical Programming Ser. B}, 95:\penalty0 189--217, 2003.

\bibitem[Papachristodoulou et~al.(2013)Papachristodoulou, Anderson, Valmorbida,
  Prajna, Seiler, and Parrilo]{sostools3}
A.~Papachristodoulou, J.~Anderson, G.~Valmorbida, S.~Prajna, P.~Seiler, and
  P.~A. Parrilo.
\newblock \emph{{SOSTOOLS}: Sum of squares optimization toolbox for {MATLAB}
  v3.00}, 2013.

\bibitem[Lawrence(2010)]{Lawrence:10}
D.~A. Lawrence.
\newblock Duality properties of linear impulsive systems.
\newblock In \emph{49th IEEE Conference on Decision and Control}, pages
  6028--6033, Atlanta, GA, USA, 2010.

\bibitem[Berman and Plemmons(1994)]{Berman:94}
A.~Berman and R.~J. Plemmons.
\newblock \emph{Nonnegative matrices in the mathematical sciences}.
\newblock SIAM, Philadelphia, USA, 1994.

\bibitem[Blanchini et~al.(2007)Blanchini, Miani, and Savorgnan]{Blanchini:07}
F.~Blanchini, S.~Miani, and C.~Savorgnan.
\newblock Stability results for linear parameter varying and switching systems.
\newblock \emph{Automatica}, 43:\penalty0 1817--1823, 2007.

\bibitem[Blanchini and Miani(2008)]{Blanchini:08}
F.~Blanchini and S.~Miani.
\newblock \emph{Set-Theoretic Methods in Control}.
\newblock Birkh{\"{a}}user, Boston, USA, 2008.

\bibitem[Mason and Shorten(2007{\natexlab{a}})]{Mason:07}
O.~Mason and R.~N. Shorten.
\newblock On linear copositive {L}yapunov functions and the stability of
  switched positive linear systems.
\newblock \emph{IEEE Transactions on Automatic Control}, 52(7):\penalty0
  1346--1349, 2007{\natexlab{a}}.

\bibitem[Chesi(2010)]{Chesi:10b}
G.~Chesi.
\newblock {LMI techniques for optimization over polynomials in control: A
  survey}.
\newblock \emph{IEEE Transactions on Automatic Control}, 55(11):\penalty0
  2500--2510, 2010.

\bibitem[Papachristodoulou et~al.(2007)Papachristodoulou, Peet, and
  Niculescu]{Papa:07}
A.~Papachristodoulou, M.~M. Peet, and S.~I. Niculescu.
\newblock Stability analysis of linear systems with time-varying delays: Delay
  uncertainty and quenching.
\newblock In \emph{46th Conference on Decision and Control}, New Orleans, LA,
  USA, 2007, 2007.

\bibitem[Peet et~al.(2009)Peet, Papachristodoulou, and Lall]{Peet:09}
M.~M. Peet, A.~Papachristodoulou, and S.~Lall.
\newblock Positive forms and stability of linear time-delay systems.
\newblock \emph{{SIAM} Journal on Control and Optimization}, 47(6):\penalty0
  3237--3258, 2009.

\bibitem[Peet(2013)]{Peet:13}
M.~M. Peet.
\newblock Full-state feedback of delayed system using {SOS}: A new theory of
  duality.
\newblock In \emph{11th {IFAC} Workshop on Time-Delay Systems}, pages 24--29,
  Grenoble, France, 2013.

\bibitem[Prajna and Papachristodoulou(2003)]{Prajna:03}
S.~Prajna and A.~Papachristodoulou.
\newblock Analysis of switched and hybrid systems - beyond piecewise quadratic
  methods.
\newblock In \emph{American Control Conference, Denver, Colorado, USA}, pages
  2779--2784, 2003.

\bibitem[Seuret and Peet(2013)]{Seuret:13}
A.~Seuret and M.~M. Peet.
\newblock Stability analysis of sampled-data systems using sum of squares.
\newblock \emph{IEEE Transactions on Automatic Control}, 58\penalty0
  (6):\penalty0 1620--1625, 2013.

\bibitem[Kamyar et~al.(2014)Kamyar, Murti, and Peet]{Kamyar:14}
R.~Kamyar, C.~Murti, and M.~M. Peet.
\newblock Constructing piecewise-polynomial {L}yapunov functions for local
  stability of nonlinear systems using handelman’s theorem.
\newblock In \emph{53rd IEEE Conference on Decision and Control}, pages
  5481--5487, Los Angeles, USA, 2014.

\bibitem[P{\'{o}}lya(1974)]{Polya:28}
G.~P{\'{o}}lya.
\newblock {\"{U}}ber positive {D}arstellung von {P}olynomen {V}ierteljschr,
  {N}aturforsch. {G}es. {Z}{\"{u}}rich, {V}ol. 73, pp. 141--145, 1928.
\newblock volume~2 of \emph{{C}ollected {P}apers ({E}d. {R}. {P}. {B}oas)},
  pages 309--313. MIT Press, Cambridge, MA, 1974.

\bibitem[Powers and Reznick(2001)]{Powers:01}
V.~Powers and B.~Reznick.
\newblock A new bound for {P}{\'{o}}lya {T}heorem with applications to
  polynomials positive on polyhedra.
\newblock \emph{Journal of pure and applied algebra}, 164:\penalty0 221--229,
  2001.

\bibitem[Sontag(1998)]{Sontag:98}
E.~Sontag.
\newblock \emph{Mathematical Control Theory: Deterministic Finite Dimensional
  Systems}.
\newblock Springer, New York, USA, 1998.

\bibitem[Gurvits et~al.(2007)Gurvits, Shorten, and Mason]{Gurvits:07}
L.~Gurvits, R.~Shorten, and O.~Mason.
\newblock {On the stability of switched positive linear systems}.
\newblock \emph{IEEE Transactions on Automatic Control}, 52(6):\penalty0
  1099--1103, 2007.

\bibitem[Mason and Shorten(2007{\natexlab{b}})]{Mason:07b}
O.~Mason and R.~Shorten.
\newblock Quadratic and copositive {L}yapunov functions and the stability of
  positive switched linear systems.
\newblock In \emph{American Control Conference}, pages 657--662, New York, USA,
  2007{\natexlab{b}}.

\bibitem[Knorn et~al.(2009)Knorn, Mason, and Shorten]{Knorn:09}
F.~Knorn, O.~Mason, and R.~N. Shorten.
\newblock On linear co-positive {L}yapunov functions for sets of linear
  positive systems.
\newblock \emph{Automatica}, 45(8):\penalty0 1943--1947, 2009.

\bibitem[Zhao et~al.(2012)Zhao, Zhang, Shi, and Liu]{Zhao:12b}
X.~Zhao, L.~Zhang, P.~Shi, and M.~Liu.
\newblock Stability of switched positive linear systems with average dwell time
  switching.
\newblock \emph{Automatica}, 48:\penalty0 1132--1137, 2012.

\bibitem[Tanaka and Langbort(2010)]{Tanaka:10}
T.~Tanaka and C.~Langbort.
\newblock {KYP} {L}emma for internally positive systems and a tractable class
  of distributed {H}-infinity control problems.
\newblock In \emph{American Control Conference}, pages 6238--6243, Baltimore,
  Maryland, USA, 2010.

\bibitem[Tanaka and Langbort(2013)]{Tanaka:13a}
T.~Tanaka and C.~Langbort.
\newblock Symmetric formulation of the {S}-{P}rocedure,
  {K}alman-{Y}akubovich-{P}opov {L}emma and their exact losslessness
  conditions.
\newblock \emph{IEEE Transactions on Automatic Control}, 58(6):\penalty0
  1486--1496, 2013.

\bibitem[Shen and Lam(2014)]{Shen:14}
J.~Shen and J.~Lam.
\newblock ${L_\infty}$-gain analysis for positive systems with distributed
  delays.
\newblock \emph{Automatica}, 50:\penalty0 175--179, 2014.

\end{thebibliography}
\end{document}